\documentclass[12pt]{amsart}
\usepackage{epsfig,color,mathtools}

\makeatother

\theoremstyle{definition}

\def\fnum{equation} 
\newtheorem{Thm}[\fnum]{Theorem}
\newtheorem{Cor}[\fnum]{Corollary}

\newtheorem{Lem}[\fnum]{Lemma}

\newtheorem{Def}[\fnum]{Definition}

\newtheorem{Pro}[\fnum]{Proposition}

\numberwithin{equation}{section}

\newcommand{\nn}{{\bf{n}}}

\newcommand{\dist}{{\text {dist}}}

\newcommand{\Hess}{{\text {Hess}}}

\def\RR{{\mathbb R}}

\def\SS{{\bold S}}

\newcommand{\e}{{\text {e}}}

\newcommand{\cA}{{\mathcal{A}}}

\newcommand{\cD}{{\mathcal{D}}}

\newcommand{\eqr}[1]{(\ref{#1})}

\title{Distance between minimal surfaces and  flows}

\author{Tobias Holck Colding}%
\address{MIT, Dept. of Math.\\
77 Massachusetts Avenue, Cambridge, MA 02139-4307.}
\author{William P. Minicozzi II}%

\thanks{The  authors
were partially supported by NSF  DMS Grants   2405393 and 2304684.}

\email{colding@math.mit.edu and minicozz@math.mit.edu}

\begin{document}

\maketitle

\begin{abstract}
We will show that the distance between two minimal hypersurfaces is a Lipschitz continuous supersolution, in the viscosity sense, of a natural elliptic partial differential equation. This not only recovers several well-known properties of minimal hypersurfaces, but also encodes substantially richer information.

Moreover, if the reference hypersurface is allowed to evolve by mean curvature flow, one obtains comparably strong estimates for a corresponding parabolic PDE, leading in particular to local Harnack inequalities for the distance. There is even a fully parabolic extension in which both hypersurfaces evolve. The problem of tracking the distance between two evolving hypersurfaces arises naturally in a wide range of settings.
\end{abstract}


\section{Introduction}

Let $\Sigma \subset \mathbb{R}^{n+1}$ be a proper\footnote{An immersion is proper if its intersection with every compact set is compact.}, connected, immersed hypersurface, and define the distance function to $\Sigma$ by
\begin{align}\label{e:defd}
d(x) = \inf_{y\in \Sigma} |x - y| \,  .
\end{align}
If $\Sigma$ is a hyperplane, then $d$ coincides with a coordinate function. In general, however, $d$ is significantly more intricate and need not be differentiable everywhere. Nevertheless, $d$ is nonnegative and Lipschitz continuous with Lipschitz constant one. Consequently, by Rademacher's theorem, $d$ is differentiable  almost everywhere in $\RR^{n+1}$.

The geometry of $\Sigma$ is captured in the second derivatives of $d$, while the restriction of $d$ to a second hypersurface
reflects the geometry of both.
Suppose that $\Sigma$ is minimal, $\Gamma_t$ is a mean curvature flow (MCF), and  $\Gamma_t$ is disjoint from $\Sigma$. Let
 $\Box_{\Gamma_t} =  \partial_t - \Delta_{\Gamma_t}$ be the heat operator on $\Gamma_t$. 
The function $d$ satisfies a natural differential inequality on $\Gamma_t$ in the viscosity sense:

\begin{Thm}	\label{c:minimal}
If $\Sigma^n  \subset \RR^{n+1}$ is a proper minimal hypersurface and $\Gamma_t$ is a mean curvature flow of hypersurfaces that are
 disjoint from $\Sigma$, then $ \Box_{\Gamma_t}  \, d^{ \frac{1}{n}}  \geq  0$ 
in the viscosity sense.
 \end{Thm}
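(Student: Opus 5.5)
The plan is to test the viscosity inequality against a smooth upper barrier for $d$ at the touching point, and so reduce it to a pointwise algebraic inequality for that barrier.

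\emph{Reduction to a barrier.} Suppose $\phi$ is smooth and touches $u=d^{\frac1n}$ from below at $(x_0,t_0)$ along the flow, so $\phi\le u$ near $(x_0,t_0)$ with equality there. Since $\Gamma_t$ is disjoint from $\Sigma$, we have $d(x_0)>0$, and properness gives a nearest point $y_0\in\Sigma$. Near $x_0$ I will build a smooth ambient function $f$ with $f\ge d$ and $f(x_0)=d(x_0)$. Then $\phi-f^{\frac1n}$ has a local maximum at $(x_0,t_0)$ in space-time along $\Gamma_t$. Hence the first derivatives agree and $\Delta_{\Gamma}\phi\le \Delta_\Gamma f^{\frac1n}$, which gives $\Box\phi\ge \Box f^{\frac1n}$ at that point. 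It therefore suffices to produce one such $f$ with $\Box_{\Gamma_t} f^{\frac1n}\ge 0$ at $(x_0,t_0)$.

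\emph{Computing $\Box$ of an ambient function.} For a time-independent ambient function $f$ restricted to an MCF, $\partial_t f=\langle \nabla f,\vec H\rangle$ and $\Delta_\Gamma f=\mathrm{tr}_{T\Gamma}\Hess f+\langle\nabla f,\vec H\rangle$. The mean curvature terms cancel, leaving
\[
\Box_\Gamma f=-\mathrm{tr}_{T\Gamma}\Hess f .
\]
With $h(s)=s^{1/n}$ this gives
\[
\Box_\Gamma h(f)=h'(f)\,\Box_\Gamma f-h''(f)\,|\nabla^\Gamma f|^2 .
\]
Since $h'>0$ and $-h''=\frac{n-1}{n^2}s^{\frac1n-2}$, the goal becomes
\[
\mathrm{tr}_{T\Gamma}\Hess f\;\le\;\frac{n-1}{n}\,\frac{|\nabla^\Gamma f|^2}{f}\qquad\text{at }x_0 .
\]

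\emph{Choice of barrier.} Write $\Sigma$ near $y_0$ as a graph $w$ over $T_{y_0}\Sigma$, with normal $\nn=(x_0-y_0)/d$ and principal curvatures $\kappa_i$ satisfying $\sum\kappa_i=0$. I will take a family of barriers
\[
f_A(x)=\bigl|x-G_A(x)\bigr|,\qquad G_A(x)=y_0+A(x-x_0)+w\bigl(A(x-x_0)\bigr)\,\nn ,
\]
where $A:\RR^{n+1}\to T_{y_0}\Sigma$ is linear. Then $G_A(x)\in\Sigma$, so $f_A\ge d$, and $f_A(x_0)=d(x_0)$.

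First-order minimality of $y_0$ ensures $\nabla f_A(x_0)=\nn$ for every $A$. The Hessian is quadratic in $A$: expanding $|x-x_0+d\,\nn-A(x-x_0)-\tfrac12\langle \mathrm{II}\,Av,Av\rangle\nn|$ gives, for $v=x-x_0$,
\[
\Hess f_A(v,v)=\frac{|v^T-Av|^2}{d}-\langle \mathrm{II}\,Av,Av\rangle ,
\]
where $v^T$ is the projection of $v$ onto $T_{y_0}\Sigma$. Tracing over $T_{x_0}\Gamma$ and minimizing over $A$ gives the key quantity. With the choice $A=\mathrm{diag}(a_i)$ on $T_{y_0}\Sigma$ (composed with the projection), one minimizes $\sum_i\bigl((1-a_i)^2/d-\kappa_ia_i^2\bigr)$ weighted by the geometry of $T\Gamma$. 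The constraint $\sum\kappa_i=0$ plus Cauchy--Schwarz, in the form $\bigl(\sum c_i\bigr)^2\le n\sum c_i^2$ over the $n$ tangent directions, is exactly where the factor $\frac{n-1}{n}$ should come from. The right-hand side $|\nabla^\Gamma f|^2/f=|\nn^{T\Gamma}|^2/d$ measures the tilt of $T_{x_0}\Gamma$ relative to $T_{y_0}\Sigma$.

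\emph{Main obstacle.} The hard step is this final algebraic optimization. When $T_{x_0}\Gamma$ is tilted, the trace involves the $n$-plane $T\Gamma$ projected onto both $T\Sigma$ and $\nn$. One must pick $A$, a linear map from $T\Gamma$ into $T\Sigma$ that is not simply the identity, so that the $\kappa$-terms can be absorbed using only $\sum\kappa_i=0$. The choice must work even if $x_0$ is a focal point, where $d$ itself is not smooth; no condition like $1-d\kappa_i>0$ is available, so the barrier family must avoid it. My first attempt is $A=P\circ B$ with $B$ chosen to make the quadratic form diagonal, followed by a Lagrange multiplier computation. If that fails, I will use the fact that only the trace of $\mathrm{II}$ restricted to an $(n-1)$-dimensional subspace matters.
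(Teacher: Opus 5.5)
Your reduction is sound. An ambient upper barrier $f\ge d$ touching at $x_0$ does transfer the viscosity test, $\Box_{\Gamma}f=-\mathrm{tr}_{T\Gamma}\Hess f$ for static $f$, the target $d\,\mathrm{tr}_{T\Gamma}\Hess f\le\frac{n-1}{n}|\nabla^{\Gamma}f|^2$ is the right one, and your formula for $\Hess f_A$ is correct. Take $A=D\circ\pi$ with $D=\mathrm{diag}\bigl((1-d\kappa_i)^{-1}\bigr)$ in the principal frame $e_i$; this is the optimal choice when every $d\kappa_i<1$. It gives
\[
d\,\mathrm{tr}_{T\Gamma}\Hess f_A=\sum_{i=1}^n g_i\,(1-\nu_i^2),\qquad g_i=\frac{-d\kappa_i}{1-d\kappa_i},\qquad \nu_i=\langle\nn_{\Gamma},e_i\rangle ,
\]
which is exactly $d\,\mathrm{tr}_{T\Gamma}\Hess d$ from Lemma \ref{l:GTF} and Proposition \ref{p:mainset} (with $\alpha_i=-d\kappa_i$). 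But you stop precisely where the theorem's content begins. The inequality $\sum_i g_i(1-\nu_i^2)\le\frac{n-1}{n}\,b^2$, with $b^2=\sum_i\nu_i^2$, is never proved; you only flag it as the ``main obstacle'' with a tentative plan. This is a weak form of the second inequality in Theorem \ref{t:ndcor}, and the mechanism you suggest does not deliver it. An $n$-term Cauchy--Schwarz/AM--HM estimate gives only $\sum_i g_i=n-\sum_i(1-d\kappa_i)^{-1}\le 0$, which is the untilted case $b=0$, and it never produces the constant $\frac{n-1}{n}$.

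The missing idea is the one the paper implements in Lemmas \ref{l:s1s2}, \ref{l:1802} and \ref{l:Fn}:
\begin{itemize}
\item Single out the largest curvature $s=d\kappa_1\in[0,1)$, which carries the smallest $g_1$, and bound the tilt term by $-\sum_i g_i\nu_i^2\le -g_1b^2$.
\item Apply AM--HM only to the other $n-1$ indices, whose values $-d\kappa_i$ sum to $s$.
\item For your target (you need $b^2$; the paper gets $b^4$), note that $\sum_i g_i-g_1b^2-\frac{n-1}{n}b^2$ is affine in $b^2\in[0,1]$. So it suffices to check $b^2=0$, which is the estimate above, and $b^2=1$, where $\sum_{i\ge 2}g_i\le (n-1)-\frac{(n-1)^2}{n-1+s}=\frac{(n-1)s}{n-1+s}\le\frac{n-1}{n}$.
\end{itemize}
The last step uses $s\le 1$. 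The non-strict bound $d\kappa_i\le 1$, which holds because $B_{d}(x_0)\cap\Sigma=\emptyset$, is exactly what produces $\frac{n-1}{n}$, so it is available and essential, not something to avoid.

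You also leave focal points unresolved. The paper sidesteps them by translating $\Sigma$ a distance $\epsilon$ toward $x_0$ (Lemma \ref{l:translate}), using $d_\epsilon+\epsilon\ge d$ as the barrier, and letting $\epsilon\to 0$. In your family they are actually easy, but this still has to be argued. If $d\kappa_1=1$, the first term is $(1-2a_1)(1-\nu_1^2)$, which is unbounded below unless $\nu_1^2=1$. In that case $b=1$, and the $b^2=1$ computation with $s=1$ applies.
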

 
  \vskip1mm
  Composition with an increasing concave function preserves supersolutions, so  
 $\Box_{\Gamma_t} \, d^p \geq 0$ for $p \in (0, 1/n]$ and  $\Box_{\Gamma_t} \log d \geq 0$.  This theorem is sharp when $n=1$ since
 $d$ is linear when $\Gamma_t = \Gamma$ is a static line segment and $\Sigma$ is a  line, so any higher power of $d$ would be strictly subharmonic.
 
In the case where $\Gamma_t$ is compact, taking the minimum of $d$ over $x \in \Gamma_t$ and applying Theorem \ref{c:minimal} yields the well-known result that the minimum distance from $\Gamma_t$ to $\Sigma$ is increasing in time. However, Theorem \ref{c:minimal} is considerably stronger: it provides a local version of this statement that holds even when $\Gamma_t$ is non-compact; it gives a local Harnack inequality:

\begin{Thm}	\label{t:apart}
Let $\Sigma$ and $\Gamma_t$ be as in Theorem \ref{c:minimal}.
Let $R > 0$ and $x_0 \in \RR^{n+1}$ be arbitrary and define $\phi = (R^2 - |x-x_0|^2 - 2\,n\,t)_+$.  Then we have for $t \geq t_0$ that
\begin{align}
	 d(x,t) \geq \phi^n (x,t) \, \,  \inf_{\Gamma_{t_0}} \,  \,  \frac{d } { \phi^n}  
	   \, .
\end{align}
\end{Thm}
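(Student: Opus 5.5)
Since $u = d^{1/n}$ is a viscosity supersolution of the heat equation on $\Gamma_t$ (Theorem \ref{c:minimal}), the plan is to compare it with the explicit function $\phi$. On a mean curvature flow the coordinates satisfy $\Box_{\Gamma_t} x = 0$, so $\Box_{\Gamma_t} |x-x_0|^2 = -2n$. Hence $\psi = R^2 - |x-x_0|^2 - 2nt$ satisfies $\Box_{\Gamma_t}\psi = -2n + 2n = 0$: it is caloric on $\Gamma_t$. Its positive part $\phi = \psi_+$ is a maximum of two caloric functions, and so it is a viscosity subsolution. Since $d \geq \phi^n \inf_{\Gamma_{t_0}} d/\phi^n$ is equivalent to $d^{1/n} \geq c\,\phi$ with $c = \bigl(\inf_{\Gamma_{t_0}} d/\phi^n\bigr)^{1/n}$, it suffices to prove $u \geq c\,\phi$ for $t\geq t_0$.

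We argue by the parabolic maximum principle for the difference $w = u - c\,\psi$ on the space-time region $\mathcal{D} = \{(x,t): t\ge t_0,\ x\in\Gamma_t,\ \psi(x,t)>0\}$. This region is bounded in space, since $|x-x_0|^2 < R^2$, and it shrinks in time. Outside $\mathcal{D}$ we have $\phi = 0 \le u$, so nothing needs proving there.

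\begin{itemize}
\item On the parabolic boundary of $\mathcal{D}$ we have $w\ge 0$. At the initial time $t=t_0$ this holds by the definition of $c$. On the lateral boundary $\psi = 0$ and $u\ge 0$.
\item Inside $\mathcal{D}$, $w$ is a viscosity supersolution, because $u$ is a supersolution and $\psi$ is smooth and caloric.
\item Suppose $\min w < 0$. Perturb to $w_\varepsilon = w + \varepsilon (t - t_0)$ and take the first time at which $w_\varepsilon$ attains a negative interior minimum; a negative minimum cannot occur on the parabolic boundary. Because $\Gamma_t\cap \overline{B_R(x_0)}$ is compact (properness) and the constraint set is compact in space-time, the minimum is attained at an interior point.
\item At that point, $c\,\psi - \varepsilon(t-t_0) + \min w_\varepsilon$ is a smooth function touching $u$ from below. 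The supersolution property gives $\Box(c\,\psi - \varepsilon(t-t_0)) \ge 0$, i.e.\ $-\varepsilon \ge 0$, a contradiction.
\item Letting $\varepsilon\to 0$ gives $w\ge0$, hence $u\ge c\,\phi$.
\end{itemize}

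The main obstacle is making this viscosity comparison rigorous on a moving hypersurface. The test function must be expressed in space-time coordinates along the flow, for instance via normal parametrization. One also has to handle the fact that $\Gamma_t$ is only immersed and may be noncompact, so compactness comes only from the cutoff $\psi>0$ and properness.

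If $\Gamma_t$ is not smooth up to the relevant times, or the minimum approaches the boundary where $\psi$ vanishes, we would instead apply the argument on $\{\psi>\delta\}$ with $\psi-\delta$, and then let $\delta\to0$. Finally, the conclusion $d = u^n \ge c^n \phi^n$ gives the stated inequality.
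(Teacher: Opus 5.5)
Your proof is correct, and it takes a more direct route than the paper.

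The paper passes to $v=d^{-1/n}$ and derives $\Box_{\Gamma_t} v\le -2|\nabla_{\Gamma_t} v|^2/v$. It then runs the Ecker--Huisken localization: from $\Box_{\Gamma_t}\phi^2\le -2|\nabla_{\Gamma_t}\phi|^2$, the product rule and completing a square, it gets $\Box_{\Gamma_t}(\phi^2v^2)\le 2\langle \nabla_{\Gamma_t}\phi/\phi,\nabla_{\Gamma_t}(\phi^2v^2)\rangle$. The weak maximum principle then bounds $\phi\, d^{-1/n}$ by its supremum over $\Gamma_{t_0}$.

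You instead notice that $\Box_{\Gamma_t} d^{1/n}\ge 0$ is a linear inequality and that $\psi$ is exactly caloric on the flow. So a direct comparison of $d^{1/n}$ with $c\,\psi$ on $\{\psi>0\}$ suffices, and $d^{1/n}\ge c\,\phi$ is the same conclusion as the paper's.

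The gain is economy. The only viscosity fact you use is that subtracting a smooth caloric function preserves supersolutions. You also write out the viscosity maximum principle explicitly: the $\varepsilon(t-t_0)$ perturbation, then touching $d^{1/n}$ from below by $c\,\psi-\varepsilon(t-t_0)+\min w_\varepsilon$. The paper instead applies chain-rule and product-rule manipulations to the nonsmooth $v$ and uses a maximum principle with a drift term. These steps are standard but each needs justification in the viscosity setting, and the paper gives none. The multiplicative-cutoff template is what one needs when the localized quantity satisfies a genuinely nonlinear inequality; here it is more machinery than required.

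Two small remarks:
\begin{itemize}
\item For $t_0<0$, the region $\{\psi>0,\ t\ge t_0\}$ lies in $|x-x_0|^2<R^2-2nt_0$ rather than $|x-x_0|^2<R^2$. It is still bounded, so nothing changes.
\item As you note, compactness of $\Gamma_t\cap\{\psi\ge 0\}$ needs properness of $\Gamma_t$. This is not among the stated hypotheses, and the paper's maximum-principle step uses it implicitly as well.
\end{itemize}
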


 \vskip1mm
 When both $\Sigma$ and $\Gamma$ are minimal, we get the following corollary of Theorem \ref{c:minimal}:

 \begin{Cor}	\label{c:minimalC}
If $\Sigma^n  \subset \RR^{n+1}$ is a proper minimal hypersurface and $\Gamma$ is a minimal hypersurface that is
 disjoint from $\Sigma$, then $ \Delta_{\Gamma}  \, d^{ \frac{1}{n}}  \leq  0$ 
in the viscosity sense.
 \end{Cor}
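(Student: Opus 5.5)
Corollary \ref{c:minimalC} follows from Theorem \ref{c:minimal} by viewing the minimal hypersurface $\Gamma$ as a static solution of mean curvature flow. The plan is as follows.

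First, set $\Gamma_t \equiv \Gamma$ for all $t$ in some interval, say $t\in(-1,1)$. Since $\Gamma$ is minimal, its mean curvature vanishes, so the normal velocity $-H\,\nn = 0$ agrees with the (zero) time derivative of the static family. Hence $\Gamma_t$ is a mean curvature flow. It is disjoint from $\Sigma$ by hypothesis, so Theorem \ref{c:minimal} applies and gives
\[
\Box_{\Gamma_t} d^{\frac1n}\geq 0
\]
in the viscosity sense on the space-time $\Gamma\times(-1,1)$.

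Next, transfer this to the elliptic inequality on $\Gamma$. The function $u = d^{\frac1n}$ restricted to $\Gamma$ does not depend on $t$. Let $\psi$ be a smooth function on $\Gamma$ touching $u$ from below at a point $p$, meaning $\psi \leq u$ near $p$ and $\psi(p)=u(p)$. Then $\Psi(x,t)=\psi(x)$ touches $u$ from below at $(p,0)$ in space-time. By the viscosity supersolution property,
\[
\partial_t\Psi - \Delta_\Gamma \Psi \geq 0 \quad \text{at } (p,0).
\]
Since $\partial_t\Psi=0$, this gives $\Delta_\Gamma\psi(p)\leq 0$. This is exactly the statement that $\Delta_\Gamma d^{\frac1n}\leq 0$ in the viscosity sense.

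The only point needing care is matching conventions. One must check that the parabolic viscosity definition used for Theorem \ref{c:minimal} tests with functions touching from below on a backward or two-sided space-time neighborhood, so that a time-independent test function is admissible. Extending the static flow to negative times makes either convention acceptable. The rest is immediate.
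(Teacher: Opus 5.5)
Your proposal is correct and follows the paper's route: the paper states this corollary as a direct consequence of Theorem \ref{c:minimal}, applied to the static flow $\Gamma_t \equiv \Gamma$ (a mean curvature flow since $H_\Gamma = 0$). You also correctly check that, under the paper's barrier definition (test functions lying below $u$ for $t \leq t_0$), time-independent test functions are admissible, so $\Box\psi = -\Delta_\Gamma \psi \geq 0$ gives $\Delta_\Gamma \psi \leq 0$.
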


 \vskip1mm
Combining Corollary \ref{c:minimalC} with the parabolicity of the two-dimensional plane implies  the    half-space theorem:

\begin{Thm}	\label{c:HM}
\cite{HM}
If $\Sigma \subset \{ x_3 > 0 \} \subset  \RR^3$ is a complete properly immersed minimal surface, then $\Sigma$ is a plane $\{ x_3 = c \}$.
\end{Thm}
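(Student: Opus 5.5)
The plan is to apply Corollary \ref{c:minimalC} with the roles chosen so that the \emph{given} surface plays the part of $\Sigma$, and the reference surface is a flat plane lying below it. Let $\Gamma = \{x_3 = 0\}$. Since $\Sigma \subset \{x_3>0\}$, the plane $\Gamma$ is a minimal surface disjoint from $\Sigma$. By Corollary \ref{c:minimalC} with $n=2$, the function $u = d^{1/2}$ restricted to $\Gamma \cong \RR^2$ satisfies $\Delta_{\Gamma}\, u \leq 0$ in the viscosity sense. Moreover $u$ is positive and continuous; it is in fact Lipschitz on compact sets away from $\{d=0\}$, and $d>0$ on $\Gamma$ because $\Sigma$ is proper and lies in the open upper half-space.

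The first step is to conclude that $u$ is constant, using the parabolicity of $\RR^2$. The only subtlety, and the step I expect to require the most care, is that $u$ is only a viscosity supersolution rather than a classical one. I would handle this directly with the logarithmic barrier. Fix $x_0$, $0<r_0<R$ and $\epsilon>0$, and compare $u$ with the harmonic function
\[
h = \min_{\partial B_{r_0}(x_0)} u \;-\; \epsilon \log\bigl(|x-x_0|/r_0\bigr)
\]
on the annulus $r_0 \leq |x-x_0| \leq R$. For $R$ large, $h<0<u$ on the outer circle, and $h \leq u$ on the inner circle. If $u-h$ had a negative interior minimum, then $h$ plus a constant would touch $u$ from below at that point. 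A strict version, obtained by subtracting $\delta|x-x_0|^2$ from $h$, would then contradict the viscosity inequality. Hence $u \geq h$ on the annulus. Letting $R\to\infty$ and then $\epsilon \to 0$ gives $u \geq \min_{\partial B_{r_0}} u$ outside $B_{r_0}(x_0)$. The same barrier argument inside the disk (the viscosity minimum principle) shows that the minimum of $u$ over $\overline{B}_{r_0}(x_0)$ is attained on the boundary. Combining the two, $u$ attains its global minimum on every circle about $x_0$, hence at an interior point of some disk. The strong minimum principle for viscosity supersolutions then forces $u$ to be constant. Thus $d \equiv a > 0$ on $\Gamma$.

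Next I identify $a$ and extract geometry. Let $c = \inf_\Sigma x_3 \geq 0$. Every $p\in\Gamma$ satisfies $d(p) \geq c$, since every point of $\Sigma$ has height at least $c$. Conversely, choose $q_i\in\Sigma$ with $x_3(q_i)\to c$, and let $p_i$ be the point of $\Gamma$ directly below $q_i$. Then $a = d(p_i) \leq x_3(q_i) \to c$. Hence $a=c$, and in particular $c>0$.

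Now fix any $p\in\Gamma$. Because $\Sigma$ is proper, the infimum defining $d(p)$ is attained at some $q\in\Sigma$ with $|q-p| = c$. Since $x_3(q) \geq c$ while $|q-p|=c$, the only possibility is $q = p + c\, e_3$. Therefore the whole plane $\{x_3 = c\}$ is contained in $\Sigma$. Finally, since $\Sigma$ is a connected immersed minimal surface containing an open piece of the plane $\{x_3=c\}$, unique continuation for minimal surfaces shows that $\Sigma$ lies in that plane. Completeness and properness then give $\Sigma = \{x_3=c\}$.
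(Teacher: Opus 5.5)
Your proof is correct in outline and follows the paper's architecture: apply Corollary \ref{c:minimalC} on $\Gamma=\{x_3=0\}$, use parabolicity of $\RR^2$ to get $d\equiv$ const on $\Gamma$, then read off that $\Sigma$ is the parallel plane.

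\textbf{The Liouville step.} This is where your route genuinely differs. The paper (Lemma \ref{l:paraR2}) argues in four moves:
\begin{enumerate}
\item pass from viscosity to distributional supersolutions via Ishii's theorem;
\item mollify to get a smooth positive superharmonic function;
\item apply the classical logarithmic cutoff argument to it;
\item let the mollifier tend to a delta function.
\end{enumerate}
You instead compare with $\log$ barriers directly in the viscosity framework. This is self-contained and avoids the viscosity-to-distribution equivalence. It is also a natural flat analogue of the logarithmically growing catenoids in the original Hoffman--Meeks argument.

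Two corrections to that step.
\begin{itemize}
\item \emph{Sign of the perturbation.} A $C^2$ function $\phi$ touching $u$ from below must satisfy $\Delta\phi\le 0$. To get a contradiction you therefore need a strictly \emph{sub}harmonic barrier. A good choice is $h+\delta\,(|x-x_0|^2-R^2)$: its Laplacian is $4\delta>0$, and it still lies below $h$ on the annulus, so the boundary comparison survives. Subtracting $\delta|x-x_0|^2$, as you wrote, gives Laplacian $-4\delta$ and no contradiction.
\item \emph{The extra minimum principles are unnecessary.} The outer comparison alone gives $u(x)\ge \min_{\partial B_{r_0}(x_0)}u$ whenever $|x-x_0|\ge r_0$. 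Letting $r_0\to 0$ and using continuity yields $u(x)\ge u(x_0)$ for all $x,x_0$. By symmetry in $x$ and $x_0$, $u$ is constant. You do not need the interior minimum principle or the strong minimum principle.
\end{itemize}

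\textbf{The geometric conclusion.} Your computation $a=c=\inf_\Sigma x_3$, giving $p+c\,e_3\in\Sigma$ for every $p\in\Gamma$, is equivalent to the paper's items (1)--(2) and is more explicit. For the final step, unique continuation from an ``open piece'' of the plane needs a short extra argument for an immersion. It is cleaner to note that $x_3$ is harmonic on the connected domain of the immersion, satisfies $x_3\ge c$, and attains the value $c$. The strong maximum principle then gives $x_3\equiv c$. The paper leaves this last step implicit.
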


\vskip1mm
The original proof of the   half-space theorem   involves sliding two-dimensional catenoids, which grow logarithmically,
 until they touch a potential counter-example. 
In higher dimensions,  hyperplanes are no longer parabolic and 
  the   half-space theorem does not hold.  This can be seen from the fact that higher dimensional catenoids are contained in slabs between two hyperplanes, \cite{Bl,CjH}.

   \vskip1mm
  When $\Gamma$ is a hyperplane, we get better estimates:

\begin{Thm}	\label{c:combine}
If $\Gamma$ is a hyperplane and $\Sigma$ is minimal,   then $| \nabla_{\Gamma} \, d|^2 \leq 1 -\epsilon_n$
and
$ \Delta_{\Gamma} \, d^{p} \leq 0$ in the viscosity sense for $p \leq \frac{ 1 + (n-1) \, \epsilon_n}{n}$, 
where $\epsilon_n =   [4\, (n-1)]^{-4} >0$.
\end{Thm}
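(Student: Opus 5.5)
The plan is to work pointwise with smooth upper barriers for $d$ at points of $\Gamma$, reduce both claims to linear algebra at a nearest point, and prove the gradient bound separately by a geometric argument. Fix $x\in\Gamma$ and a nearest point $y\in\Sigma$, so that $d(x)=|x-y|$ and $N=(x-y)/|x-y|$ is normal to $\Sigma$ at $y$. The ball $B(x,d(x))$ misses $\Sigma$, so the principal curvatures $\kappa_i$ of $\Sigma$ at $y$, measured with respect to $N$, satisfy $a_i:=d(x)\kappa_i\le 1$.

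\emph{Barrier and Hessian computation.} Let $\psi$ be the distance to a small piece of $\Sigma$ near $y$. To avoid focal points, first replace $x$ by a point slightly closer to $y$ along the segment, or replace $\Sigma$ locally by a slightly less curved comparison surface; this arranges $a_i<1$ and makes $\psi$ smooth near $x$. Then $\psi\ge d$ near $x$ and $\psi(x)=d(x)$, so $\psi$ touches $d$ from above. Any $C^2$ function $\varphi$ touching $d$ from below at $x$ then satisfies $\nabla\varphi=\nabla\psi=N^T$, the projection of $N$ to $\Gamma$, and $\Delta_\Gamma\varphi\le\Delta_\Gamma\psi$. Hence it suffices to prove $\Delta_\Gamma\psi^p\le 0$ at $x$.

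Let $e_i$ be the principal directions at $y$ and $e_0=N$, and write $c_i=\langle e_i,\nu_\Gamma\rangle$. Then
\begin{align}
 d\,\Delta_\Gamma\psi=\sum_{i\ge1}\frac{-a_i}{1-a_i}\,(1-c_i^2),\qquad |\nabla_\Gamma\psi|^2=1-c_0^2=\sum_{i\ge1}c_i^2 .
\end{align}
The Hessian of $\psi$ vanishes in the $N$ direction, and $\Gamma$ is totally geodesic, so this trace is all of $\Delta_\Gamma\psi$. The target inequality $\Delta_\Gamma\psi^p\le0$ is therefore equivalent to
\begin{align}
 \sum_{i\ge1}\frac{-a_i}{1-a_i}(1-c_i^2)\le (1-p)\sum_{i\ge1}c_i^2 .
\end{align}
Minimality gives $\sum a_i=0$. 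Together with $-a/(1-a)\le -a$ and $1-c_i^2\ge0$, the left side is at most $\sum_i a_i c_i^2$. This quantity is then to be maximized under the constraints $\sum a_i=0$, $a_i\le1$, and $\sum c_i^2=1-c_0^2$ with $c_0^2+\sum c_i^2=1$.

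Without further information the extremal configuration is one $a_i=1$ with the others equal to $-1/(n-1)$, and $\nu_\Gamma$ nearly orthogonal to $N$. This reproduces the exponent $1/n$ of Corollary~\ref{c:minimalC}. To improve it I would use two facts. First, we can keep the term $a^2/(1-a)$ that was discarded above; it penalizes $a_i$ near $1$. Second, the gradient bound gives $c_0^2\ge\epsilon_n$. A careful but elementary optimization should then give $(1-p)\ge \frac{(n-1)(1-\epsilon_n)}{n}$, i.e. $p\le\frac{1+(n-1)\epsilon_n}{n}$. The key point is that $\sum c_i^2\le 1-\epsilon_n$ limits how much weight can sit on the direction with $a_i\approx1$, relative to $1-c_i^2$ on the others.

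\emph{Gradient bound (main obstacle).} We must show $c_0^2=\langle N,\nu_\Gamma\rangle^2\ge\epsilon_n$, i.e. the segment from $y$ to $x$ is never nearly parallel to $\Gamma$. Since $\Sigma$ is connected and disjoint from the hyperplane $\Gamma$, it lies in one open half-space. If $N$ were nearly tangent to $\Gamma$, then $y$ would sit at height $h=d|c_0|\ll d$ above $\Gamma$, on the outside of the ball $B(x,d)$ near its equator.

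I would try to rule this out by scale-invariant barriers. Rescale so that $d=1$. Near $y$, $\Sigma$ must avoid both $B(x,1)$ and $\{x_{n+1}\le 0\}$, and in that region it sits in a thin cusp of width about $h$. I would then compare with a piece of an $n$-dimensional catenoid, or use the monotonicity formula (density $\ge1$ at $y$) against the small volume of this region at scale $\sim h^{1/2}$, to force a contradiction once $h$ is small. The explicit constant $[4(n-1)]^{-4}$ suggests a quantitative chain of this type, with curvature estimates ($a_i\ge-(n-1)$ from $\sum a_i=0$, $a_i\le1$) controlling the graph of $\Sigma$ over its tangent plane at $y$ on a ball of radius $\sim 1/(n-1)$ and then a quartic loss.

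This step is where I expect the real difficulty. If the direct barrier argument fails, I would instead try applying Corollary~\ref{c:minimalC} to $d$ restricted to parallel hyperplanes, combined with a blow-up/compactness argument at points where $|\nabla_\Gamma d|\to1$.
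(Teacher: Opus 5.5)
\textbf{There is a genuine gap: the gradient bound is not proved.} Your reduction matches the paper's. The upper barrier you want is the paper's $h_\epsilon=d_\epsilon+\epsilon$: the distance to $\Sigma$ translated by $\epsilon N$, plus $\epsilon$. Your trace formula for $d\,\Delta_\Gamma\psi$ is correct, and both claims do reduce to showing $c_0^2=\langle N,\nu_\Gamma\rangle^2\ge\epsilon_n$ at every nearest point. But that gradient bound is the only genuinely new ingredient in the theorem, and the picture you plan to exploit is wrong. Because $x\in\Gamma$ is the center of the empty ball $B(x,d)$, the sphere $\partial B(x,d)$ meets $\Gamma$ orthogonally. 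Near its equator, the region available to $\Sigma$ is therefore a right-angled wedge (locally a quarter-space), not a thin cusp of width $h$.

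This breaks each of your suggested tools:
\begin{itemize}
\item A quarter-space fills a fixed fraction of every ball whose radius is large compared with the distance to its edge. So the monotonicity bound (density $\ge 1$) gives no contradiction.
\item The bound $|a_i|\le n-1$ holds only at the single point $y$. It gives no graphical control of $\Sigma$ on any neighborhood, since general minimal hypersurfaces have no a priori curvature estimates.
\item The blow-up fallback is only a gesture. With no area or curvature bounds it is unclear what compactness you would use, and at best it would give an unspecified $\epsilon_n$, not $[4(n-1)]^{-4}$.
\end{itemize}

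\textbf{The missing idea is a maximum principle adapted to this wedge.} Suppose $\langle\partial_{n+1},q-p\rangle<|q-p|/(16(n-1)^2)$. Lemma~\ref{l:findquarter} shows that a horizontal rigid motion gives $\Sigma\cap B_s\subset\{x_n>0,\,x_{n+1}>0\}$ with $s=d/(4(n-1))$. This uses only $B_d(p)\cap\Sigma=\emptyset$ and $\Sigma\subset\{x_{n+1}>0\}$. Moreover, $q$ lies in $B_s$ within $\sqrt2\,s/(4(n-1))$ of the edge $\{x_n=x_{n+1}=0\}$. Take $z$ the edge coordinates and $p'=(0,\dots,0,s,s)$. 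The function $f=|x-p'|^2-\frac{n}{n-1}|z|^2$ is subharmonic on $\Sigma$, since $\Delta_\Sigma|x|^2=2n$ and $|\nabla_\Sigma z|^2\le n-1$. It is at most $(2-\frac1{n-1})s^2$ on $\Sigma\cap\partial B_s$ (Lemma~\ref{l:function2}), yet exceeds this at $q$ (Lemma~\ref{l:findquarter}), contradicting the maximum principle. Squaring the threshold $1/(16(n-1)^2)$ is exactly where $\epsilon_n=[4(n-1)]^{-4}$ comes from.

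\textbf{The Laplacian claim needs no new optimization.} The ``careful but elementary optimization'' you leave open is precisely \eqr{e:alsohave} of Theorem~\ref{t:ndcor}, proved via Lemmas~\ref{l:1802} and~\ref{l:Fn}. It is available in viscosity form as \eqr{e:visc1AA} with $H_\Gamma=0$: $d\,\Delta_\Gamma d\le\frac{n-1}{n}|\nabla_\Gamma d|^4$. Then
$\frac{d^{2-p}}{p}\Delta_\Gamma d^p\le\bigl(\frac{n-1}{n}|\nabla_\Gamma d|^2+p-1\bigr)|\nabla_\Gamma d|^2\le0$ once $|\nabla_\Gamma d|^2\le 1-\epsilon_n$. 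No further interplay between $c_0$ and the principal directions is needed.

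Your intermediate remark is also off. After discarding $a_i^2/(1-a_i)$, the bound $\sum a_ic_i^2$ only gives $p\le 0$, not $1/n$: take $a_1\to1$ with all of $\sum c_i^2$ on $e_1$. The discarded term is what produces the factor $\frac{n-1}{n}$.
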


\vskip1mm
Tracking the distance between two evolving hypersurfaces arises naturally in a variety of settings, even when one is ultimately concerned with a single flow $\Gamma_t$. A prominent example is the level set method for weak solutions of MCF (see, e.g. the expository book and articles \cite{G, CM2, CM3}). In this, one selects a function $u$ whose zero-level set coincides with $\Gamma_0$ and then considers the PDE under which every level set evolves by mean curvature  with initial data $u$. This leads to a nonlinear degenerate parabolic PDE; see  \cite{ES} and   \cite{ChGG}. By construction, distinct level sets of $u$ remain disjoint for all time.

\vskip1mm
We get a similar, but weaker, result when both
 $\Sigma_t$  and $\Gamma_t$ are flows:

 \begin{Thm}	\label{t:maind}
 If $\Gamma_t$ and $\Sigma_t$ are disjoint mean curvature flows where each $\Sigma_t$ is proper, then 
$\Box_{\Gamma_t} \,  d \geq - |\nabla_{\Gamma_t} d|^4$
in the viscosity sense and, thus, 
$\Box_{\Gamma_t}\,  \log d \geq 0$. 
\end{Thm}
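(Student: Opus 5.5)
Our approach is the standard viscosity touching argument: touch $d$ from below by a test function at a point of $\Gamma_t$, and compute using the nearest point on $\Sigma_t$.

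Fix $(x_0,t_0)$ with $x_0\in\Gamma_{t_0}$, and let $\varphi$ be a smooth function on a spacetime neighborhood in the flow $\Gamma_t$ with $\varphi\le d$ near $(x_0,t_0)$, equality at $(x_0,t_0)$, considered for $t\le t_0$ as usual for parabolic viscosity supersolutions. We must show $\Box_{\Gamma_t}\varphi \ge -|\nabla_{\Gamma_t}\varphi|^4$ at $(x_0,t_0)$. Since $\Sigma_{t_0}$ is proper, there is $y_0\in\Sigma_{t_0}$ with $|x_0-y_0|=d(x_0,t_0)=:d_0>0$. Set $\nn=(x_0-y_0)/d_0$, which is a unit normal to $\Sigma_{t_0}$ at $y_0$. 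We build a smooth upper barrier
\[
\psi(x,t)=|x-y(x,t)|\ \ge d(x,t),
\]
where $y(x,t)\in\Sigma_t$ is a smooth family of points near $y_0$, chosen depending on $x$. Then $\varphi\le d\le\psi$ with equality at $(x_0,t_0)$, so $\psi-\varphi$ has a local min there. This gives $\nabla_{\Gamma}\varphi=\nabla_\Gamma\psi$, $\Delta_\Gamma\varphi\le\Delta_\Gamma\psi$ and $\partial_t\varphi\ge\partial_t\psi$ (with $t\le t_0$, after composing with the normal motion of $\Gamma_t$). Hence $\Box\varphi\ge\Box\psi$, and it suffices to prove $\Box_{\Gamma_t}\psi\ge-|\nabla_{\Gamma_t}\psi|^4$ at $(x_0,t_0)$.

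The choice of $y(x,t)$ is the key freedom. Write $x=x_0+v$ with $v$ tangent to $\Gamma$ to first order, and split $v=v^T+\langle v,\nn\rangle\nn$ relative to $T_{y_0}\Sigma$. Take $y(x,t)$ to be the point of $\Sigma_t$ obtained by moving along the flow and then translating in $T\Sigma$ by $L(v^T)$ for a linear map $L$ to be optimized. The parallel-surface choice $L=$ the differential of the nearest-point projection recovers $\psi=d$ when $d$ is smooth. A cheaper choice is $L=\mathrm{Id}$, i.e.\ translation rigidly along $\Sigma$. We then compute:
\begin{itemize}
\item[(i)] $\partial_t\psi$, where the motion of $y$ contributes $-\langle \partial_t y,\nn\rangle = H_\Sigma$ (with sign from the normal velocity), and the motion of $x$ contributes $-H_\Gamma\langle \nn_\Gamma,\nn\rangle$;
\item[(ii)] $\Delta_\Gamma\psi$, which contributes the mean curvature term $H_\Gamma\langle\nn_\Gamma,\nn\rangle$ (cancelling the corresponding term in (i)), the second fundamental form of $\Sigma$ along $L(e_i^T)$, and the terms $\frac{1}{d_0}\big(|e_i-L e_i^T|^2-\langle e_i,\nn\rangle^2\big)$.
\end{itemize}
Here $\{e_i\}$ is an orthonormal frame of $T_{x_0}\Gamma$. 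With a suitable $L$ the Hessian of $\Sigma$ appears as $\sum_i A_\Sigma(Le_i^T,Le_i^T)$.

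The main obstacle is controlling the $A_\Sigma$ term, since there is no minimality now; $H_\Sigma$ instead cancels against the time derivative of $y$. Both $H_\Sigma$ and the quadratic form $\sum_i A(Le_i^T,Le_i^T)$ involve $A_\Sigma$. Note that $\Gamma$'s tangent space at $x_0$ misses at most one direction of $T\Sigma$ (the one tilted by $\nn_\Gamma$), so $\sum_i A(e_i^T,e_i^T)=H_\Sigma-(1-|a|^2)A(w,w)$, where $|a|=|\nabla_\Gamma d|$ measures the tilt and $w$ is a unit vector. To kill the leftover eigenvalue term, we will scale $L$ in the tilted direction by a factor $\lambda$. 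This uses the freedom to move $y$ by $\lambda$ instead of $1$ along $w$: the term $A(w,w)$ gets multiplied by $\lambda^2-1$-type factors, while the penalty $\frac{1}{d_0}|e-\lambda e^T|^2$ grows. One also uses that the Hessian of $|x-y|$ in $y$ is at most $\frac1{d_0}$ in tangential directions, i.e.\ $A_\Sigma(w,w)\ge -1/d_0$ from the touching sphere condition at $y_0$; this yields a one-sided bound. Optimizing the quadratic in $\lambda$ should leave an error of order $\frac{1}{d_0}|\nabla_\Gamma d|^4$, hence $\Box d\ge -|\nabla d|^4/d$ up to normalization. After rescaling, this is the claim once we use the sign structure. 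Indeed, $\Box\log d=\Box d/d+|\nabla d|^2/d^2\ge0$ follows as soon as $\Box d\ge -|\nabla d|^2/d$, and $|\nabla d|\le1$. I expect exactly this optimization, and getting the right power and $d$-dependence, to be the delicate step. If the stated form $-|\nabla d|^4$ is scale-inconsistent, I would aim for $-|\nabla d|^4/d$, which suffices for the $\log d$ conclusion. Finally, $\Box\log d\ge0$ follows from the chain rule for viscosity supersolutions under the increasing map $\log$.
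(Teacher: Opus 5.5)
\paragraph{Verdict: the route differs from the paper's and can work, but the key estimate is missing and its supporting identity is wrong.}

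Your framework is a genuinely different route from the paper's. You touch $d$ from above by the smooth function $\psi=|x-y(x,t)|$ with $y(x,t)\in\Sigma_t$, and you keep a free first-order map $L$. The paper instead computes the exact Hessian of $d$ at points of $\cD_0$ (Lemma \ref{l:GTF}, Proposition \ref{p:mainset}) and reaches the other points by replacing $d$ with $d_\epsilon+\epsilon$. Your $\psi$ is smooth at every touching point, so no case split is needed. Your scaling remark is also correct: the paper's argument proves $d\,\Box_{\Gamma_t}d\ge-|\nabla_{\Gamma_t}d|^4$.

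The proposal, however, stops exactly where the content of the theorem lies. The inequality for $\Box\psi$ is only asserted (``should leave an error of order\ldots''). The one identity you give toward it is wrong. With $w=\nn_\Gamma^T/|\nn_\Gamma^T|$ one has
$$\sum_i A(e_i^T,e_i^T)=H_\Sigma-|\nabla_\Gamma d|^2A(w,w),\quad\text{not}\quad H_\Sigma-(1-|\nabla_\Gamma d|^2)A(w,w).$$
To see this, note that when $\Gamma$ is parallel to $\Sigma$ at the two points, $\nabla_\Gamma d=0$ and the projection loses nothing. Since this identity is what sets up your quadratic in $\lambda$, the optimization you describe is not yet the right one. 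The error also hides that your ``cheap'' choice already proves the $\log d$ claim.

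\paragraph{The missing computation.} Set the following notation:
\begin{itemize}
\item $\alpha=d_0A_\Sigma$, so $\alpha\ge-1$ by the touching ball;
\item $P\colon T_{x_0}\Gamma\to T_{y_0}\Sigma$ is the orthogonal projection;
\item $b=|\nabla_\Gamma\psi|=\sqrt{1-\langle\nn,\nn_\Gamma\rangle^2}$, which does not depend on $L$;
\item $c=\sqrt{1-b^2}$.
\end{itemize}
For any linear $L\colon T_{x_0}\Gamma\to T_{y_0}\Sigma$ your barrier satisfies
\[
d_0\,\Box_\Gamma\psi=\mathrm{tr}\,\alpha-\sum_i\Big(|Pe_i-Le_i|^2+\alpha(Le_i,Le_i)\Big)\,.
\]
Here the two $\langle \vec H_\Gamma,\nn\rangle$ terms cancel, and the motion of $y$ contributes $\mathrm{tr}\,\alpha$.

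\emph{The $\log d$ claim.} Taking $L=P$ gives $d_0\Box\psi=b^2\alpha(w,w)\ge-b^2$. This already yields $\Box\log d\ge0$.

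\emph{The quartic bound.} Keep $L=P$ on $T_{x_0}\Gamma\cap\nn^\perp\cap w^\perp$. Send the remaining unit vector $u\in T_{x_0}\Gamma$, which satisfies $Pu=cw$, to $\mu w$. With $\beta=\alpha(w,w)$ this gives
$$d_0\,\Box_\Gamma\psi=\beta(1-\mu^2)-(c-\mu)^2 .$$
Its maximum is at $\mu=c/(1+\beta)$, with value $\beta(\beta+b^2)/(1+\beta)$.
\begin{itemize}
\item For $\beta=-s\in(-1,0)$ this value is $\frac{s^2-b^2s}{1-s}\ge-(1-c)^2\ge-b^4$, which is exactly Lemma \ref{l:babyFn}.
\item For $\beta\ge0$, take $\mu=c$.
\item For $\beta=-1$, take $\mu$ large. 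If $c=0$, any $\mu$ works and the value is $-1=-b^4$.
\end{itemize}
The fully optimal choice $L=(I+\alpha)^{-1}P$ reproduces the paper's identity in Proposition \ref{p:mainset}.

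With this computation supplied, your route is a complete proof and slightly cleaner than the paper's, since it avoids the $\cD_0$/translation step. Without it, neither conclusion has been established.
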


\vskip1mm
When $\Gamma_t$ is compact,
 one can combine this theorem with the maximum principle to show that the minimum distance from $\Sigma_t$ to $\Gamma_t$ is increasing over time.  This is the well-known avoidance principle.
 Using Theorem \ref{t:maind}, we can show a local version of the avoidance principle that holds even when the flows are non-compact.  This shows that 
two disjoint flows push  apart, or at least do not come together very rapidly, even locally.

\begin{Thm}	\label{t:twoflowslocal}
Suppose that $\Sigma_t$ and $\Gamma_t$ are disjoint MCFs of hypersurfaces in $\RR^{n+1}$,  $\Sigma_t$ is proper, and $\Gamma_t$ is compact. 
 For $\kappa>0$ and any $x_0 \in \RR^{n+1}$,
define $\psi (x,t)= \e^{ - \kappa \, (|x- x_0|^2 + 2\,n\, t)}$.  Then we have for $t_1 \geq t_0$ and $x \in \Gamma_{t_1}$  that
\begin{align}	\label{e:TLF}
	d(x,t) \geq \psi (x,t) \, \int_{x \in \Gamma_{t_0}} \, \, \frac{ d(x,t_0)}{ \psi (x,t_0)} \, .
\end{align}
\end{Thm}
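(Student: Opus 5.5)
The plan is to reduce the statement to Theorem \ref{t:maind} together with a parabolic minimum principle on the compact flow $\Gamma_t$. I read the quantity on the right of \eqr{e:TLF} as $\inf_{\Gamma_{t_0}} d/\psi$. That reading matches Theorem \ref{t:apart} and is what the argument below produces; it also yields the stated form whenever the right side is understood as a lower bound attained at a minimizing point. I also take the evaluation time $t$ in \eqr{e:TLF} to be $t_1$, for $x\in\Gamma_{t_1}$.

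\emph{Step 1: the weight is caloric.} On any MCF, $\partial_t x = \vec H = \Delta_{\Gamma_t} x$. It follows that
\begin{align}
\Box_{\Gamma_t} |x-x_0|^2 = 2\langle x-x_0,\vec H\rangle - \bigl(2n + 2\langle x-x_0,\Delta_{\Gamma_t} x\rangle\bigr) = -2n \, .
\end{align}
Hence $\Box_{\Gamma_t}\bigl(|x-x_0|^2+2nt\bigr)=0$, and so $\log\psi = -\kappa\,(|x-x_0|^2+2nt)$ satisfies $\Box_{\Gamma_t}\log\psi = 0$ classically. The weight $\psi$ itself is not caloric, since the heat operator does not commute with $\exp$. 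This is why one works with logarithms: Theorem \ref{t:maind} gives exactly the statement $\Box_{\Gamma_t}\log d\geq 0$ in the viscosity sense.

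\emph{Step 2: a supersolution for the quotient.} Set $w=\log d-\log\psi$. Since $\log\psi$ is smooth and caloric, subtracting it preserves viscosity supersolutions. Concretely, if a smooth $\varphi$ touches $w$ from below at $(x,t)$, then $\varphi+\log\psi$ touches $\log d$ from below there, and so $\Box(\varphi+\log\psi)\ge 0$, i.e. $\Box\varphi\ge 0$. Thus $\Box_{\Gamma_t} w\geq 0$ in the viscosity sense. Note that $d>0$ on $\Gamma_t$ because the flows are disjoint, $\Gamma_t$ is compact and $\Sigma_t$ is proper, so $w$ is finite and continuous.

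\emph{Step 3: minimum principle.} Let $m=\min_{\Gamma_{t_0}} w$. I claim $\min_{\Gamma_t} w\geq m$ for all $t\geq t_0$, which after exponentiating is the desired inequality. To prove it, fix $\epsilon>0$ and consider $w_\epsilon = w + \epsilon\,(t-t_0)$, which satisfies $\Box w_\epsilon\geq\epsilon>0$ in the viscosity sense. Suppose $w_\epsilon < m-\delta$ somewhere on $[t_0,t_1]$ for some $\delta>0$. By compactness of the space-time track of $\Gamma_t$ and continuity of $w$, there is a first time $\bar t>t_0$ and a point $\bar x$ where $\min_{\Gamma_{\bar t}} w_\epsilon = m-\delta$. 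Take the test function $\varphi\equiv m-\delta$, a constant, which is legitimate since it is smooth. It touches $w_\epsilon$ from below at $(\bar x,\bar t)$ on the backward parabolic neighborhood. The definition then gives $0=\Box\varphi\ge\epsilon$, a contradiction. Letting $\delta,\epsilon\to 0$ gives the claim.

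The main obstacle is making Step 3 rigorous for a moving hypersurface with only a one-sided viscosity inequality. One needs that touching on the backward parabolic neighborhood is the correct notion used in Theorem \ref{t:maind}, and that the first-touching time exists. Both follow from compactness of $\Gamma_t$ and the $\epsilon t$ perturbation, which converts the non-strict inequality into a strict one so that a constant test function suffices.
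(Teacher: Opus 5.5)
Your proposal is correct and follows the paper's argument. The paper (Lemma \ref{l:exp}) combines $\Box_{\Gamma_t}\log d\geq 0$ from Theorem \ref{t:maind} with the caloric function $|x-x_0|^2+2nt$ to get $\Box_{\Gamma_t}(\psi\, d^{-1})\leq 0$, then applies the weak maximum principle on the compact $\Gamma_t$; you instead work directly with $\log d-\log\psi$, which avoids the exponential chain rule, and you write out the viscosity minimum principle via the $\epsilon\,(t-t_0)$ perturbation. Reading the right-hand side of \eqr{e:TLF} as $\inf_{\Gamma_{t_0}} d/\psi$ is right, since that is what the maximum principle gives and the integral as written is not even dimensionally consistent; so drop your hedge that the integral form also follows.
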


\vskip1mm
The estimate \eqr{e:TLF} is not as strong as Theorem \ref{t:apart} where $\Sigma_t$  is a   static minimal hypersurface.
This comes from the fact that we no longer get a power of $d$ to be a supersolution, but rather must go all the way to $\log d$.  This difference forces us to replace a power of 
$|x|^2 + 2n\, t$ by an exponential.

Theorem \ref{t:maind} should be compared with the estimate in  \cite{BK} for the separation function between two different locally defined
sheets of a single mean curvature flow.

 These estimates are nearly optimal, as we will see next. In particular, $d$ is not in general a supersolution to the heat equation, as the next proposition shows in the case where $\Sigma_t$ are shrinking spheres.  

 \begin{Pro}	\label{p:sphereX}
 For an evolving sphere $\Sigma_t^n \subset \RR^{n+1}$ of radius $r(t)$ centered at the origin
 \begin{align}	\label{e:D8}
 \Box \, d(x,t) &= 
 \begin{cases}
 \frac{1}{|x|}\,\left(\frac{n\,d}{d+|x|}-|\nabla^T\,d|^2\right) 
&\text{if $\Gamma_t$ is inside the sphere.  }\\
\frac{1}{|x|}\,\left(\frac{n\,d}{|x|-d}+|\nabla^T\,d|^2\right)
&\text{if $\Gamma_t$ is outside the sphere.  }
\end{cases}
\end{align}
When $\Gamma_t$ is inside $\Sigma_t$, $x_0 \in \Gamma_{t_0}$, and $x_0 \in T_{x_0} \Gamma_{t_0}$, then 
 $ \Box \, d (x_0 , t_0) < 0$ if $|x_0| > \frac{n-1}{n} \, r(t_0)$.
 \end{Pro}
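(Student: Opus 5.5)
The plan is a direct computation. Both the inside and outside cases come from one identity for the heat operator of an ambient function restricted to a mean curvature flow, applied to $d(x,t)=|r(t)-|x||$.

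\emph{Step 1: reduction to ambient derivatives.} Let $f$ be a smooth function on $\RR^{n+1}\times\RR$. Write $\nn$ for the unit normal of $\Gamma_t$ and $\vec H$ for its mean curvature vector, so that $\Gamma_t$ moves with velocity $\vec H$. Along the flow the time derivative of $f$ is $\partial_t f + \langle \nabla f, \vec H\rangle$. The standard formula for the Laplacian of a restricted function is
\begin{align*}
\Delta_{\Gamma_t} f = \Delta f - \Hess_f(\nn,\nn) + \langle \nabla f, \vec H\rangle .
\end{align*}
The two $\vec H$ terms cancel, giving
\begin{align*}
\Box\, f = \partial_t f - \Delta f + \Hess_f(\nn,\nn).
\end{align*}
Away from the origin, $d$ is smooth and $d=\pm(r-|x|)$ on each side of the sphere, so the identity applies. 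I will use three facts:
\begin{align*}
r' = -\tfrac{n}{r}, \qquad \Delta |x| = \tfrac{n}{|x|}, \qquad \Hess_{|x|}(\nn,\nn) = \tfrac{1-\langle x/|x|,\nn\rangle^2}{|x|}.
\end{align*}
Since $\nabla d = \mp x/|x|$ is a unit vector, $1-\langle x/|x|,\nn\rangle^2 = |\nabla^T d|^2$.

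\emph{Step 2: the two cases.} Inside, $d=r-|x|$, so $\Box\, d = -\frac{n}{r}+\frac{n}{|x|}-\frac{|\nabla^T d|^2}{|x|}$. Substituting $r=d+|x|$ gives $\frac{n}{|x|}-\frac{n}{r}=\frac{n\,d}{|x|\,(d+|x|)}$, which is the first line of \eqr{e:D8}. Outside, $d=|x|-r$, so every sign flips: $\Box\, d=\frac{n}{r}-\frac{n}{|x|}+\frac{|\nabla^T d|^2}{|x|}$. Substituting $r=|x|-d$ gives the second line.

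\emph{Step 3: the final claim.} Suppose $\Gamma_t$ is inside and the position vector $x_0$ lies in $T_{x_0}\Gamma_{t_0}$. Then $\nabla d=-x_0/|x_0|$ is tangent to $\Gamma_{t_0}$, so $|\nabla^T d|^2=1$. With $d=r-|x_0|$ this gives
\begin{align*}
\Box\, d = \frac{1}{|x_0|}\left(\frac{n\,(r-|x_0|)}{r}-1\right).
\end{align*}
This is negative exactly when $n(r-|x_0|)<r$, that is, when $|x_0|>\frac{n-1}{n}\,r(t_0)$.

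There is no real obstacle. The only points needing care are the sign conventions in Step 1, namely checking that the $\vec H$ terms really cancel, and the requirement that $x\neq 0$ so that $d$ is smooth there.
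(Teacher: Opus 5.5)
Your proof is correct and is essentially the paper's argument. In both, you write $d=\pm(r-|x|)$, use $\Box_{\Gamma_t}|x|=\frac{|\nabla_{\Gamma_t}|x||^2-n}{|x|}$ on the flow together with $r'=-n/r$, and then set $|\nabla_{\Gamma_t} d|=1$ for the final claim. The only cosmetic difference is how $\Box_{\Gamma_t}|x|$ is obtained: the paper gets it from $\Box_{\Gamma_t}|x|^2=-2n$ and the chain rule, while you derive it from the identity $\Box f=\partial_t f-\Delta f+\Hess_f(\nn,\nn)$ for restrictions of ambient functions.
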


 \vskip1mm
 Our approach begins by analyzing the hessian of the distance function at points where it is smooth, and then employs a barrier argument to study its behavior at non-smooth points in the spirit of \cite{Ca}. Alternatively, these questions can be addressed using variational methods and the two-point maximum principle (see \cite{A,ALM,BK,Br,DZ,HK,L}), but the method presented here is more  direct and yields stronger information.

    \section{The Lapacian of the distance function to a hypersurface}
    
 The triangle inequality implies that $d$ defined by \eqr{e:defd} is Lipschitz with $|d(x) - d(y)| \leq |x-y|$, and thus also continuous and differentiable almost everywhere by Rademacher's theorem ($3.1.6$ in \cite{F}).

 Define the set $\cD$ to be  the set of points  $p \in \RR^{n+1}$ so that 
 \begin{itemize}
 \item $d$ is differentiable at $p$, and 
 \item there is a unique closest point  $q = q_p$  in $\Sigma$.  
\end{itemize}
     Given $p \in \cD$, 
  let     $\nn (q) = \frac{q - p}{|q -p|}$ be the ``outward unit normal'' to $\Sigma$ at $q$.   Let   $\lambda_1 \leq \lambda_2 \leq \dots \leq \lambda_n$ the principal curvatures{\footnote{The principal curvatures are the eigenvalues of the second fundamental form $A$.
  The mean curvature $H$ is the trace of $A$, so $H = \sum_{i=1}^n \lambda_i$.}}
   of $\Sigma$ at $q$ and   set $\alpha_i = d(p) \, \lambda_i$.   Since $q$ is closest to $p$, we have $B_{d(p)} (p) \cap \Sigma = \emptyset$, and thus the second derivative test gives that 
\begin{align}
	- 1 \leq \alpha_1 \leq \alpha_2 \leq \cdots \leq \alpha_n   \, ,
\end{align}

\begin{Def}	\label{d:cD0}
We will say that $p \in \cD$ is in $\cD_0$  if $-1 < \alpha_1$. 
\end{Def}

 \begin{Lem}	\label{l:GTF}
 At any  $p \in \cD_0$,  $\nabla d (p)  = - \nn$, $d$ is twice differentiable and $d (p)  \, \Hess^{\RR^{n+1}}_d (p)$ has eigenvalues $\frac{\alpha_1}{1+\alpha_1} , \dots , \frac{\alpha_n}{1+\alpha_n}$ and $0$ in the direction $\nn$. In particular, 
  we have
\begin{align}  \label{e:old65}
	d\, \Delta^{\RR^{n+1}} \, d &=   \sum_{i=1}^n \frac{\alpha_i}{1+ \alpha_i}=   \sum_{i=1}^n\left( \alpha_i -  \frac{\alpha_i^2}{1+ \alpha_i} \right)= d\, H (q) - \sum_{i=1}^n \frac{\alpha_i^2}{1+ \alpha_i} \, .
\end{align}
\end{Lem}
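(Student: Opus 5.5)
The plan is to show that near $p$ the distance function is given by the inverse of the normal exponential map of $\Sigma$ near $q$. Its Hessian can then be read off from the shape operator of the parallel hypersurfaces.

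\textbf{Setting up the normal map.} Since $p\in\cD$, the point $q=q_p$ is the unique closest point, and $p = q - d(p)\,\nn(q)$. Because $\Sigma$ is immersed, I work with a small embedded sheet $\Sigma_0 \ni q$ and the map $F(y,s) = y - s\,\nn(y)$ on $\Sigma_0\times \RR$. Its differential at $(q,d(p))$ acts on principal directions $e_i$ by $e_i \mapsto (1 + d\lambda_i)\, e_i = (1+\alpha_i)\,e_i$, and sends $\partial_s$ to $-\nn$. Because $p\in\cD_0$, every $1+\alpha_i>0$, so $F$ is a local diffeomorphism near $(q,d(p))$. This gives a smooth function $s$ near $p$, namely the second component of $F^{-1}$, with $s(p)=d(p)$ and $|\nabla s|=1$, $\nabla s = -\nn$.

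\textbf{Identifying $d$ with $s$ near $p$.} This is the main obstacle: I need $d = s$ on a neighborhood of $p$, not just at $p$. Clearly $d\le s$ nearby, since $s(x)$ is the distance from $x$ to some point of $\Sigma_0$. For the reverse inequality I use a compactness argument.
\begin{itemize}
\item Take $p_k\to p$, and let $q_k$ be closest points to $p_k$; they exist because $\Sigma$ is proper.
\item By properness and uniqueness of $q_p$, the $q_k$ converge to $q$ in the immersion sense, so they eventually lie on the sheet $\Sigma_0$.
\item Each $q_k$ satisfies the first-order condition $p_k = q_k - d(p_k)\nn(q_k)$, so $(q_k, d(p_k))$ is an $F$-preimage of $p_k$ near $(q,d(p))$.
\item By local injectivity of $F$, this forces $d(p_k)=s(p_k)$.
\end{itemize}
Hence $d=s$ near $p$, so $d$ is smooth (in particular twice differentiable) there and $\nabla d(p)=-\nn$. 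Differentiability of $d$ at $p$ is not even needed for this; uniqueness and $\alpha_1>-1$ suffice.

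\textbf{Computing the Hessian.} Since $|\nabla d|=1$, differentiating gives $\Hess_d(\nabla d,\cdot)=0$, which is the $0$ eigenvalue in direction $\nn$. The level set $\{d=s\}$ through $p$ is the parallel hypersurface $y\mapsto y-s\nn(y)$. Its principal directions are still $e_i$, and its principal curvatures are
\[
\frac{\lambda_i}{1+s\lambda_i},
\]
because the shape operator transforms as $S(I+sS)^{-1}$ when $dF = I + sS$. The Hessian of a unit-gradient function restricted to the level set equals the second fundamental form of that level set, with respect to the sign convention fixed by $\nabla d=-\nn$ being aligned with the normal used at $q$. Multiplying by $d$ gives the eigenvalues $\alpha_i/(1+\alpha_i)$.

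Taking the trace and using the algebraic identity $\frac{\alpha}{1+\alpha}=\alpha-\frac{\alpha^2}{1+\alpha}$ together with $\sum_i\alpha_i=d\,H(q)$ yields \eqr{e:old65}.

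The sign convention is worth checking with a sphere of radius $r$ seen from outside. There $d=|x|-r$, $d\Delta d = n d/|x|$, and $\alpha=d/r$ gives $\frac{\alpha}{1+\alpha}=\frac{d}{r+d}=\frac{d}{|x|}$, which is consistent.
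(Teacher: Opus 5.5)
Your proof is correct and follows the route of the argument the paper cites (Gilbarg--Trudinger, p.~355): invert the normal map $(y,s)\mapsto y-s\,\nn(y)$, which is a local diffeomorphism exactly because every $1+\alpha_i>0$, and identify $\Hess_d$ on $\nn^\perp$ with the shape operator of the parallel hypersurface through $p$, whose eigenvalues are $\lambda_i/(1+d\lambda_i)$. Your compactness step is a worthwhile addition: properness together with uniqueness of the closest point forces nearby closest points onto the same sheet, so $d=s$ near $p$. The paper's proof does not supply this, because the cited lemma is stated in a uniform tubular neighborhood rather than at an individual point of $\cD_0$.
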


\begin{proof}
At any point  $p \in \cD_0$, $d$ is twice differentiable, $\nabla d = - \nn$,  and the Euclidean hessian of $d$  has eigenvalues  (see page $355$ in \cite{GiTr}, cf. section $6$ in \cite{ES}) 
 \begin{align}	\label{e:lis}
 	\frac{\lambda_i}{1 + d \, \lambda_i} {\text{ for $i \leq n$ and }} 0 {\text{ in the direction $\nn$}}. 
 \end{align}
 Finally, 
  the last equality in \eqr{e:old65} uses that $H$ is the sum of the principal curvatures.
\end{proof}

\vskip1mm
We will use the following elementary fact below:  

\begin{Lem}	\label{l:s1s2}
If $p \in \cD_0$, then $\min_{i} \, \frac{\alpha_i}{1+\alpha_i} = \frac{\alpha_1}{1+\alpha_1}$. 
\end{Lem}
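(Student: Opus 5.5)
The plan is to prove the lemma by showing that $s \mapsto \frac{s}{1+s}$ is strictly increasing on $(-1,\infty)$ and then using the ordering of the $\alpha_i$.

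First I will check where the $\alpha_i$ lie. Since $p \in \cD_0$, Definition \ref{d:cD0} gives $-1 < \alpha_1$. Combined with the ordering $\alpha_1 \leq \alpha_2 \leq \cdots \leq \alpha_n$ obtained from the second derivative test, every $\alpha_i$ lies in $(-1,\infty)$. In particular, each quantity $\frac{\alpha_i}{1+\alpha_i}$ is well defined, with no division by zero.

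Next I will establish the monotonicity. Set $f(s) = \frac{s}{1+s} = 1 - \frac{1}{1+s}$ for $s > -1$. Its derivative is
\begin{align*}
f'(s) = \frac{1}{(1+s)^2} > 0 ,
\end{align*}
so $f$ is strictly increasing on $(-1,\infty)$. Alternatively, one can avoid calculus: for $-1 < s \leq t$,
\begin{align*}
f(t) - f(s) = \frac{t-s}{(1+s)(1+t)} \geq 0 ,
\end{align*}
because both factors in the denominator are positive.

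Finally I will conclude. Since $\alpha_1 \leq \alpha_i$ for every $i$ and all the $\alpha_i$ lie in $(-1,\infty)$, monotonicity gives $f(\alpha_1) \leq f(\alpha_i)$ for every $i$. Hence
\begin{align*}
\min_i \frac{\alpha_i}{1+\alpha_i} = \frac{\alpha_1}{1+\alpha_1} .
\end{align*}

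The only point that needs care is the domain: $f$ is not monotone across its pole at $s=-1$. This is exactly why the hypothesis $p \in \cD_0$, rather than merely $p \in \cD$, is needed.
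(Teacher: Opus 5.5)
Your proposal is correct and is essentially the paper's proof: the paper also derives the lemma from the fact that $f(s)=\frac{s}{1+s}$ is increasing on $(-1,\infty)$, since $f'(s)=(1+s)^{-2}>0$, together with the ordering $-1<\alpha_1\leq\cdots\leq\alpha_n$. Your calculus-free identity $f(t)-f(s)=\frac{t-s}{(1+s)(1+t)}$ and your remark about the pole at $s=-1$ are valid additions, but they do not change the route.
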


\begin{proof}
This follows from the fact that   the function $f(s) = \frac{s}{1+s}$ is increasing for $s> -1$ since
\begin{align}
	f'(s) = (1+ s)^{-2} > 0 \, .
\end{align}
\end{proof}

\vskip1mm
Suppose now that $\Gamma \subset \RR^{n+1}$ is a   hypersurface that is disjoint from $\Sigma$.

\begin{Pro}	\label{p:mainset}
If $p \in \cD_0 \cap \Gamma$, then we have at $p$ that 
\begin{align}  
	d\, \Delta_{\Gamma} \, d  - d \, H 
	+ d\, H_{\Gamma} \, \langle \nn  , \nn_{\Gamma} \rangle
	+ \sum_{i=1}^n \frac{\alpha_i^2}{1+ \alpha_i}&=     - d \, \Hess^{\RR^{n+1}}_d (\nn_{\Gamma}^T , \nn_{\Gamma}^T)  \leq    - \frac{\alpha_1}{1+\alpha_1} \, |\nabla_{\Gamma} d|^2 
	 \, ,  \notag
\end{align}
where $\nn_{\Gamma}^T$ is the projection of $\nn_{\Gamma}$ perpendicular to $\nn$.
 \end{Pro}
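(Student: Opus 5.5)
The plan is to use the standard formula for the Laplacian on a hypersurface of the restriction of an ambient function, and then read off the Euclidean Hessian from Lemma \ref{l:GTF}. By Lemma \ref{l:GTF}, $d$ is twice differentiable at $p$. For a $C^2$ ambient function $f$ restricted to $\Gamma$, the identity
\begin{align}
	\Delta_{\Gamma} f = \Delta^{\RR^{n+1}} f - \Hess^{\RR^{n+1}}_f (\nn_{\Gamma}, \nn_{\Gamma}) - H_{\Gamma} \, \langle \nabla f , \nn_{\Gamma} \rangle
\end{align}
holds, with the sign convention for $H_{\Gamma}$ matching the one in which $\Sigma$'s principal curvatures are measured. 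At a point where $d$ is only twice differentiable (not $C^2$ nearby), this still holds pointwise. The reason is that the identity only involves the second-order Taylor expansion of $d$ at $p$ composed with a local parametrization of $\Gamma$, and that composition is twice differentiable at $p$ with the expected chain-rule expansion.

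I then substitute the known quantities. Lemma \ref{l:GTF} gives $\nabla d = -\nn$, so the last term becomes $+H_{\Gamma}\langle \nn, \nn_{\Gamma}\rangle$. Multiplying by $d$ and using \eqr{e:old65} for $d\,\Delta^{\RR^{n+1}} d$ gives
\begin{align}
	d\,\Delta_{\Gamma} d = d\,H - \sum_{i=1}^n \frac{\alpha_i^2}{1+\alpha_i} - d\, \Hess^{\RR^{n+1}}_d(\nn_{\Gamma},\nn_{\Gamma}) - d\,H_{\Gamma}\langle \nn,\nn_{\Gamma}\rangle \, .
\end{align}
Since the Hessian has $\nn$ in its kernel (Lemma \ref{l:GTF}), $\Hess_d(\nn_{\Gamma},\nn_{\Gamma}) = \Hess_d(\nn_{\Gamma}^T,\nn_{\Gamma}^T)$. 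Rearranging gives the claimed equality.

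For the inequality, $d\,\Hess_d$ restricted to $\nn^{\perp}$ has eigenvalues $\frac{\alpha_i}{1+\alpha_i}$. By Lemma \ref{l:s1s2} the smallest of these is $\frac{\alpha_1}{1+\alpha_1}$, so
\begin{align}
	d\,\Hess_d(\nn_{\Gamma}^T,\nn_{\Gamma}^T) \geq \frac{\alpha_1}{1+\alpha_1}\,|\nn_{\Gamma}^T|^2 \, .
\end{align}
It remains to identify $|\nn_{\Gamma}^T|^2 = |\nabla_{\Gamma} d|^2$. We have $|\nn_{\Gamma}^T|^2 = 1 - \langle \nn, \nn_{\Gamma}\rangle^2$, and $\nabla_{\Gamma} d$ is the tangential projection of $-\nn$ onto $T_p\Gamma$. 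Hence $|\nabla_{\Gamma} d|^2 = 1 - \langle \nn,\nn_{\Gamma}\rangle^2$, so the two agree.

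The computation itself is routine. The main obstacles are keeping the sign conventions for $H_{\Gamma}$ and $\nn_{\Gamma}$ consistent with the statement, and justifying the restriction formula at a point of mere twice-differentiability. I would handle the latter by working with the second-order Taylor polynomial of $d$ at $p$.
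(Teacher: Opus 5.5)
Your route is the paper's: write $\Delta_\Gamma d$ as the trace of the ambient Hessian over $T_p\Gamma$ plus a mean curvature term, then insert $\nabla d=-\nn$ and \eqr{e:old65}. Next replace $\nn_\Gamma$ by $\nn_\Gamma^T$, since $\nn$ lies in the kernel of the Hessian. Finally use Lemma \ref{l:s1s2} together with $|\nn_\Gamma^T|^2=1-\langle\nn,\nn_\Gamma\rangle^2=|\nabla_\Gamma d|^2$. One step is wrong as written, though: the sign of the mean curvature term in your restriction formula.

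The paper measures curvature by $A(X,Y)=\langle\bar\nabla_XY,\nn\rangle$. This is what makes $\alpha_1\ge -1$ when $\nn=(q-p)/|q-p|$, and it is the convention you yourself invoke. Under it, $\Delta_\Gamma x=H_\Gamma\,\nn_\Gamma$, which is why MCF is written $\partial_t x=H_\Gamma\nn_\Gamma$ and why $\Box_{\Gamma_t}|x|^2=-2n$. Hence
\begin{align*}
\Delta_\Gamma f=\Delta^{\RR^{n+1}}f-\Hess^{\RR^{n+1}}_f(\nn_\Gamma,\nn_\Gamma)+H_\Gamma\,\langle\nabla f,\nn_\Gamma\rangle \, ,
\end{align*}
which is the first line of the paper's computation.

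With your minus sign, $\nabla d=-\nn$ gives $+d\,H_\Gamma\langle\nn,\nn_\Gamma\rangle$ on the right of $d\,\Delta_\Gamma d$, as your own sentence says. Rearranging would then put $-d\,H_\Gamma\langle\nn,\nn_\Gamma\rangle$ on the left, the opposite of the statement. Your displayed identity has the correct sign only because it does not follow from the line before it.

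The sign is not cosmetic. The term $+d\,H_\Gamma\langle\nn,\nn_\Gamma\rangle$ is exactly what cancels $\partial_t|x(t)-y_0|=-H_\Gamma\langle\nn_\Gamma,\nn\rangle$ in the proof of Theorem \ref{t:visc}. With $+H_\Gamma$ in the restriction formula, your argument coincides with the paper's proof.
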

 
 \begin{proof}
Let $\bar{e}_i$ be an orthonormal frame at $p$, so that at $p$ 
\begin{align} 
	d\, \Delta_{\Gamma} \, d  &=  d\, H_{\Gamma} \, \langle \nabla^{\RR^{n+1}} d , \nn_{\Gamma} \rangle +
	 \sum_{i=1}^n d\, \Hess^{ \RR^{n+1}} (\bar{e}_i , \bar{e}_i) \notag \\
	 &= -  d\, H_{\Gamma} \, \langle \nn  , \nn_{\Gamma} \rangle + 
	d\, \Delta^{\RR^{n+1}} d - d \, \Hess^{\RR^{n+1}}_d (\nn_{\Gamma} ,\nn_{\Gamma}) \\
	&=  -  d\, H_{\Gamma} \, \langle \nn  , \nn_{\Gamma} \rangle + d \, H  - \sum_{i=1}^n \frac{\alpha_i^2}{1+ \alpha_i}  - d \, \Hess^{\RR^{n+1}}_d (\nn_{\Gamma} , \nn_{\Gamma})  \notag \, , 
\end{align}
where the second equality used that the Euclidean gradient of  $d$ is $-\nn$ and the last equality used Lemma \ref{l:GTF}.
The first equality in the proposition  follows from this and the fact that the hessian of $d$ vanishes in the $\nn$ direction.

Using again that the Euclidean gradient of  $d$ is $-\nn$, we see that
\begin{align}
	|\nn_{\Gamma}^T|^2 = 1 - \langle \nn_{\Gamma} , \nn \rangle^2 = 1-  \langle \nn_{\Gamma} , \nabla^{\RR^{n+1}} \, d  \rangle^2 = |\nabla_{\Gamma} d|^2 \, ,
\end{align}
where $\nabla_{\Gamma} d$ is the tangential gradient of $d$ on $\Gamma$.
Therefore, since $\nn_{\Gamma}^T$ is contained in the span of the $e_i$'s, the variational characterization of eigenvalues 
and Lemma \ref{l:s1s2} give that
\begin{align}
		-d\, \Hess^{\RR^{n+1}}_d  (\nn_{\Gamma}^T , \nn_{\Gamma}^T)  \leq -\frac{\alpha_1}{1+ \alpha_1} \, \left| \nn_{\Gamma}^T \right|^2 = 
		- \frac{\alpha_1}{1+ \alpha_1} \, \left| \nabla_{\Gamma} d \right|^2 \, .
\end{align}
This gives the last inequality.
\end{proof}

\subsection{Minimality}

Suppose now that $\Sigma$ is minimal.     Theorem \ref{t:ndcor}
 gives two  upper bounds for the Laplacian of $d$ on a second hypersurface $\Gamma$.

   \begin{Thm}	\label{t:ndcor}
There exists $\kappa > 0$ depending only on $n$ so that on $\Gamma \cap \cD_0$
\begin{align}
	d \, \Delta_{\Gamma} \, d + d \, H_{\Gamma} \, \langle \nn , \nn_{\Gamma} \rangle  \leq  \left( \frac{2n-1}{2n} \right) |\nabla_{\Gamma} d|^4  - \kappa \, |A|^2 \, d^2 \, ,
\end{align}
where $|A|^2$ is evaluated at the unique closest point in $\Sigma$.  We also have
\begin{align}	\label{e:alsohave}
	d \, \Delta_{\Gamma} \, d + d \, H_{\Gamma} \, \langle \nn , \nn_{\Gamma} \rangle 
	 \leq  \left( \frac{n-1}{n} \right) |\nabla_{\Gamma} d|^4    \, .
\end{align}
\end{Thm}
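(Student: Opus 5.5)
Throughout, $\Sigma$ is minimal, so at the closest point $H=0$ and $\sum_i \alpha_i = 0$. The plan is to start from the equality in Proposition \ref{p:mainset} and reduce everything to an elementary inequality in the $\alpha_i$ and $g = |\nabla_{\Gamma} d|^2 \in [0,1]$.

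\textbf{Reduction.} With $H=0$, Proposition \ref{p:mainset} gives at $p\in\Gamma\cap\cD_0$
\begin{align}
	d\,\Delta_\Gamma d + d\,H_\Gamma\langle \nn,\nn_\Gamma\rangle \leq - \sum_{i=1}^n \frac{\alpha_i^2}{1+\alpha_i} - \frac{\alpha_1}{1+\alpha_1}\, g \, .
\end{align}
If $\alpha_1 \geq 0$, then $\sum \alpha_i = 0$ forces all $\alpha_i = 0$. Hence $|A|^2 = 0$, the right side vanishes, and both claims hold trivially. Otherwise write $\alpha_1 = -a$ with $a \in (0,1)$, using $p \in \cD_0$. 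Then $\sum_{i \geq 2}\alpha_i = a$ and each $\alpha_i > -1$.

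\textbf{Bounding the $i\ge 2$ terms.} The function $s \mapsto s^2/(1+s)$ is convex on $(-1,\infty)$. Jensen's inequality then gives
\begin{align}
	\sum_{i \geq 2} \frac{\alpha_i^2}{1+\alpha_i} \geq \frac{a^2}{n-1+a} \, .
\end{align}
So the right-hand side is at most
\begin{align}
	F(a,g) = \frac{a(g-a)}{1-a} - \frac{a^2}{n-1+a} = \frac{a\,\bigl(g(n-1+a) - n a\bigr)}{(1-a)(n-1+a)} \, .
\end{align}

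\textbf{Proving \eqr{e:alsohave}.} It now suffices to show $F(a,g) \leq \frac{n-1}{n} g^2$ for $a\in(0,1)$, $g\in[0,1]$. $F$ is only positive when $a < \frac{(n-1)g}{n-g} \leq g$, so we may assume $a < g \leq 1$. For fixed $g$, I would maximize in $a$ directly, e.g. by clearing denominators and checking a polynomial inequality in $(a,g)$. Alternatively, one can bound the denominator $(1-a)(n-1+a)$ from below on the relevant range $a \le (n-1)g/(n-g)$ and then maximize the numerator $a(g(n-1+a)-na)$, a quadratic in $a$. At small $g$, the quadratic $a\bigl(g(n-1)-(n-g)a\bigr)/(n-1)$ has maximum $\approx \frac{(n-1)g^2}{4n}$, so the constant $\frac{n-1}{n}$ has room. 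The \emph{main obstacle} is this calculus step for $g$ near $1$: there the factor $1/(1-a)$ can blow up, and one must use $a < (n-1)g/(n-g)$ to keep $a$ away from $1$ relative to $g$. If the direct estimate is tight, I would split into the cases $a \le g/2$ and $a > g/2$.

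\textbf{The $|A|^2$ version.} Since each $\alpha_i \geq -a$ and $\sum\alpha_i = 0$, every $\alpha_i \leq (n-1)a$. Hence
\begin{align}
	|A|^2 d^2 = \sum_i \alpha_i^2 \leq C_n a^2 \, .
\end{align}
I would rerun the argument above keeping a small multiple $\delta\, a^2$ of the negative term $-a^2/(1-a) - a^2/(n-1+a)$ unused. This shows $F(a,g) + \delta a^2 \leq \frac{2n-1}{2n} g^2$, absorbing the cost of $\delta$ into the extra slack $\frac{1}{2n} g^2$ between the two constants. Note that when $F > 0$ we have $a \leq g$, so $a^2 \lesssim g^2$ and the slack does absorb this term; when $F \leq 0$, the retained term is already negative. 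Taking $\kappa = \delta / C_n$ then gives the first inequality.
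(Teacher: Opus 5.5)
\textbf{There is a genuine gap: the proposal stops exactly where the content of the theorem lies.}

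Your reduction is correct and is essentially the paper's. Proposition \ref{p:mainset} with $H=0$, the trivial case $\alpha_1\ge 0$, and $|A|^2d^2\le C_n a^2$ are all fine. Jensen for the convex function $s^2/(1+s)$ is a cleaner replacement for the Lagrange-multiplier Lemma \ref{l:1802}.

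The missing piece is the inequality $F(a,g)\le\frac{n-1}{n}g^2$ for $a\in[0,1)$, $g\in[0,1]$. You only label it the ``main obstacle,'' and the fallbacks you sketch are not adequate as stated. The bound has no slack at the corner $(a,g)\to(1,1)$: at $g=1$ one has $F(a,1)=\frac{(n-1)a}{n-1+a}\uparrow\frac{n-1}{n}$ as $a\uparrow 1$.
\begin{itemize}
\item Bounding $(1-a)(n-1+a)$ from below on $a\le (n-1)g/(n-g)$ gives nothing when $g=1$, since the constraint is then just $a<1$.
\item A case split with crude estimates loses a constant exactly where none can be lost.
\end{itemize}
Any proof must exploit the cancellation of the factor $1-a$ between the numerator $g(n-1+a)-na$ and the denominator.

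\textbf{How the paper closes it.} Lemma \ref{l:Fn} with $\beta=n$ writes $g=1-c^2$, so that $F=c^2\bigl(1-\frac{1}{1-a}\bigr)+(n-1)-\frac{(n-1)^2}{n-1+a}$. Then $\partial_aF=\frac{(n-1)^2}{(n-1+a)^2}-\frac{c^2}{(1-a)^2}$ has the unique zero $a_0=\frac{(n-1)(1-c)}{n-1+c}$. The maximum is $\frac{n-1}{n}(1-c)^2$, and $1-c\le 1-c^2=g$.

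\textbf{Your own first suggestion also works if carried out.} After multiplying by $n(1-a)(n-1+a)>0$, the claim becomes $Q(a)\ge 0$ for the quadratic $Q(a)=\bigl(n^2-ng-(n-1)g^2\bigr)a^2-(n-1)g\bigl((n-2)g+n\bigr)a+(n-1)^2g^2$. Its leading coefficient is at least $(n-1)^2$, and its discriminant is $n^2(n-1)^2g^2(g+3)(g-1)\le 0$.

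\textbf{The $|A|^2$ term also needs repair.} When $F\le 0$, nonpositivity of $F$ alone does not give $F+\delta a^2\le\frac{2n-1}{2n}g^2$. One fix is a case split:
\begin{itemize}
\item if $a>2g$, then $F\le -a^2/2$, so $F+\delta a^2\le 0$ for $\delta\le\frac12$;
\item if $a\le 2g$, then $\delta a^2\le 4\delta g^2$ fits in the slack $\frac{1}{2n}g^2$ once $\delta\le\frac{1}{8n}$.
\end{itemize}
The paper sidesteps this by applying the same sharp bound with $\beta=2n$, i.e.\ with $\frac{a^2}{2n-1+a}$ in place of $\frac{a^2}{n-1+a}$. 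This leaves over $\frac{a^2}{n-1+a}-\frac{a^2}{2n-1+a}\ge\frac{a^2}{2n}$ unconditionally.
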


 We will use the next two lemmas in the proof.

\begin{Lem}	\label{l:1802}
If $\sum_{i=2}^n x_i = L > 0$ and every $x_i > -1$, then
\begin{align}
	\sum_{i=2}^n \frac{x_i^2}{1+x_i} \geq \frac{L^2}{n-1 + L} \, ,
\end{align}
with equality holding if and only if every $x_i = \frac{L}{n-1}$.
\end{Lem}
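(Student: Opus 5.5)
This is a convexity statement, so the plan is to apply Jensen's inequality to the function $g(s) = \frac{s^2}{1+s}$ on $(-1,\infty)$; the case of equality will follow from strict convexity. There are $n-1$ variables $x_2,\dots,x_n$ with mean $\bar{x} = \frac{L}{n-1}$. Since each $x_i > -1$, the mean also satisfies $\bar{x} > -1$; in fact $\bar{x} > 0$ because $L>0$. So Jensen gives
\begin{align}
	\sum_{i=2}^n \frac{x_i^2}{1+x_i} \geq (n-1)\, g(\bar{x}) = (n-1)\,\frac{L^2/(n-1)^2}{1 + L/(n-1)} = \frac{L^2}{n-1+L} \, ,
\end{align}
which is exactly the claimed bound.

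The one real step is to check that $g$ is strictly convex on $(-1,\infty)$. I would write
\begin{align}
	g(s) = \frac{s^2}{1+s} = s - 1 + \frac{1}{1+s} \, .
\end{align}
From this form, $g''(s) = \frac{2}{(1+s)^3} > 0$ for $s>-1$. This is the same algebraic identity, $\frac{\alpha^2}{1+\alpha} = \alpha - \frac{\alpha}{1+\alpha}$, that appears in \eqr{e:old65}, so it fits the paper's setup.

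For the equality case, strict convexity means equality in Jensen holds if and only if all the $x_i$ coincide. Under the constraint $\sum_{i=2}^n x_i = L$, that forces every $x_i = \frac{L}{n-1}$, and conversely this choice clearly gives equality.

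If one prefers to avoid quoting Jensen, an alternative route is Cauchy--Schwarz (Titu/Engel form):
\begin{align}
	\sum_{i=2}^n \frac{x_i^2}{1+x_i} \geq \frac{\left(\sum_{i=2}^n x_i\right)^2}{\sum_{i=2}^n (1+x_i)} = \frac{L^2}{n-1+L} \, .
\end{align}
This uses that every denominator $1+x_i$ is positive. Equality holds if and only if $\frac{x_i}{1+x_i}$ is the same for all $i$. Since $s \mapsto \frac{s}{1+s}$ is injective for $s>-1$ (Lemma \ref{l:s1s2}), all the $x_i$ must then be equal, giving $x_i = \frac{L}{n-1}$ as before.

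There is no genuine obstacle; the only care needed is to keep all denominators positive and to track the equality case.
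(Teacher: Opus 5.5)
Your proof is correct, and it takes a different route from the paper. The paper uses the same identity $\frac{s^2}{1+s} = s - 1 + \frac{1}{1+s}$, but only to reduce the problem to minimizing $\sum_{i=2}^n (1+x_i)^{-1}$ on the constraint set. It then argues that this sum blows up as any $x_i \to -1$. Implicitly, the constraint set is a bounded open simplex, so an interior constrained minimum exists. A Lagrange multiplier computation follows: the partials $f_i = -(1+x_i)^{-2}$ must all be equal, which forces $x_i = \frac{L}{n-1}$ at the minimum.

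You instead use the identity to get $g''(s) = 2(1+s)^{-3} > 0$ and apply Jensen. Your alternative applies the Engel form of Cauchy--Schwarz, which needs only the positive denominators $1+x_i$. Either way, you need no existence or compactness argument. The equality case also comes out directly, from strict convexity or from the Cauchy--Schwarz equality condition together with injectivity of $s \mapsto \frac{s}{1+s}$. In the paper the ``only if'' direction is left implicit: every minimizer is a critical point, and the critical point is unique. Your argument also makes plain that $L>0$ is not needed. Jensen works for any admissible $L$, since $\bar{x} > -1$ automatically, so your remark that $\bar{x}>0$ is superfluous.
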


\begin{proof}
Observe that 
\begin{align}
	\frac{s^2}{1+s} = \frac{s^2 +s}{1+s} - \frac{s}{1+s} =s - 1 + \frac{1}{1+s} \, .
\end{align}
Therefore,  we are looking for the minimum of 
\begin{align}
	f(x_2 , \dots , x_n) = \sum_{i=2}^n \frac{x_i^2}{1+x_i} = \sum_{i=2}^n \left( x_i - 1 + \frac{1}{1+x_i} \right)
	= L - (n-1) +  \sum_{i=2}^n   \frac{1}{1+x_i} \, ,
\end{align}
subject to the constraints that $\sum_{i=2}^n x_i = L > 0$ and every $x_i > - 1$.  
The next observation is that $f \to \infty$ as any $x_i \to -1$. Combining this with the constraint shows that $f \to \infty$ as we approach the boundary of the region.  It follows that $f$ must have an interior minimum (subject to the constraint). Namely, we have a Lagrange multiplier problem
\begin{align}
	{\text{minimize }} f {\text{ subject to the constraint }} g(x_2 , \dots , x_n) = \sum_{i=2}^n x_i = L \, .
\end{align}
Since the gradient of the constraint function $g$  is the vector of all ones, it follows that the partials of $f$ must all be equal at a constrained minimum.  However, $f_i = - (1+x_i)^{-2}$, so this means that the $x_i$'s are all equal to $L/(n-1)$ at the minimum.  This gives 
\begin{align}
	f \geq  \sum_{i=2}^n \frac{ (L/(n-1))^2}{1 + L/(n-1)} = \frac{ L^2}{n-1+L} \, , 
\end{align}
completing the proof.
\end{proof}

Motivated by this, given $b\leq 1$ and $\beta \geq 2$, define $F=F_{b,\beta}$ of $s \in [0,1)$ by
\begin{align}	\label{e:defFb}
	F(s) = b^2 \, \frac{s}{1-s} - \frac{s^2}{1-s} - \frac{s^2}{\beta-1 +s} \,  . 
\end{align}
Set $c  = \sqrt{1 - b^2}$.

\begin{Lem}	\label{l:Fn}
The supremum of $F$ defined by \eqr{e:defFb} over $s \in [0,1)$ is given by
\begin{align}
	\sup_{s \in [0,1)} F(s) =  F \left( (\beta-1) \, \frac{ 1-c}{\beta-1 +c} \right) =  \left( \frac{\beta-1}{\beta} \right) \, (1-c)^2 \leq 
	 \left( \frac{\beta-1}{\beta} \right) \,  b^4 \, .
\end{align}
Moreover, if $b<1$, then the supremum is achieved in $[0,1)$.
\end{Lem}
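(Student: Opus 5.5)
The plan is to rewrite $F$ as a sum of simple partial fractions, so that concavity becomes evident and the critical point can be computed exactly. Write $m = \beta - 1 \geq 1$ and $c^2 = 1-b^2$ with $c\in[0,1]$; I am tacitly reading the hypothesis as $0\le b\le 1$, which is what makes $c=\sqrt{1-b^2}$ real and lie in $[0,1]$. I will use the elementary identities
\begin{align}
\frac{s}{1-s} = \frac{1}{1-s} - 1, \qquad \frac{s^2}{1-s} = \frac{1}{1-s} - 1 - s, \qquad \frac{s^2}{m+s} = s - m + \frac{m^2}{m+s}.
\end{align}
Substituting these into \eqr{e:defFb}, the terms linear in $s$ cancel and the constants combine to $c^2+m$, giving
\begin{align}
F(s) = c^2 + m - \frac{c^2}{1-s} - \frac{m^2}{m+s}.
\end{align}
This representation drives everything else.

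\textbf{Concavity and the critical point.} Differentiating gives $F'(s) = -\frac{c^2}{(1-s)^2} + \frac{m^2}{(m+s)^2}$. Differentiating again gives $F''(s) = -\frac{2c^2}{(1-s)^3} - \frac{2m^2}{(m+s)^3} < 0$ on $[0,1)$, so $F$ is strictly concave there. Setting $F'=0$ with all quantities positive gives $m(1-s) = c(m+s)$, that is, $s_0 = m\frac{1-c}{m+c}$, which is the point in the statement. When $c>0$ we have $0\le s_0<1$: nonnegativity holds because $c\le 1$, and $s_0<1$ is equivalent to $-mc< c$. Strict concavity then shows that $s_0$ is the global maximizer on $[0,1)$, so the supremum is attained whenever $b<1$.

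\textbf{Evaluating the maximum and the degenerate case.} At $s_0$ one has $1-s_0 = \frac{c(m+1)}{m+c}$ and $m+s_0 = \frac{m(m+1)}{m+c}$. Hence $\frac{c^2}{1-s_0} + \frac{m^2}{m+s_0} = \frac{(m+c)^2}{m+1}$, and therefore
\begin{align}
F(s_0) = \frac{(c^2+m)(m+1) - (m+c)^2}{m+1} = \frac{m(1-c)^2}{m+1} = \left(\frac{\beta-1}{\beta}\right)(1-c)^2 .
\end{align}
When $b=1$ we have $c=0$, and then $F(s) = m - \frac{m^2}{m+s}$ is increasing. Its supremum over $[0,1)$ is the limit $\frac{m}{m+1}$ as $s\to 1$; this matches the formula with $c=0$ (where $s_0=1$ is read as a limit) and is not attained. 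The final inequality follows from $0\le c\le 1$, which gives $1-c \le 1-c^2 = b^2$ and hence $(1-c)^2\le b^4$.

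The only step needing care is the algebraic simplification of $F$; once it is in partial-fraction form, the rest is routine single-variable calculus. The main subtlety is handling the endpoint case $c=0$ separately, as above.
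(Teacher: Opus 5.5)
Your proposal is correct and follows essentially the same route as the paper: the same partial-fraction rewriting $F(s)=c^2+m-\frac{c^2}{1-s}-\frac{m^2}{m+s}$, the same derivative and critical point $s_0$, and the same identities for $1-s_0$ and $\beta-1+s_0$ to evaluate $F(s_0)$. The only differences are minor additions: you use strict concavity ($F''<0$) in place of the paper's sign analysis of $F'$, and you treat the case $c=0$ and the bound $(1-c)^2\le b^4$ explicitly.
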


\begin{proof}
Using that $b^2 = 1-c^2$, we can rewrite  $F$ as
\begin{align}
	F(s) &= 
	   c^2  \,\left(  1 - \frac{1}{1-s} \right) + (\beta-1) -  \frac{  (\beta-1)^2  }{\beta-1 +s} 
	 \, .  
\end{align}
Differentiating this gives
\begin{align}
	F' (s) =  \frac{-c^2}{(1-s)^2}   + \frac{(\beta-1)^2}{(\beta-1+s)^2} \, .
\end{align}
From this, we see that $F$ has just one critical point $s_0= (\beta-1) \, \frac{ 1-c}{\beta-1 +c}$ in $[0,1]$ and 
\begin{align}
	0 < F' (s)   {\text{ for }} s < s_0 ,  {\text{ while }} F'(s) < 0 {\text{ for }} s_0 < s \, .
\end{align}
It follows that the maximum of $F$ is at $s_0$.  
Thus,  we have that 
\begin{align}
	\sup_{s\in [0,1)} \, F(s) &= F(s_0) =  \left( \frac{\beta-1}{\beta} \right) \, (1-c)^2 \, ,
\end{align}
where the second equality is easy to check using that
\begin{align}
	1-s_0=\frac{\beta\, c}{\beta-1+c}\,  {\text{ and }} 
	\beta-1+s_0=\frac{\beta\,(\beta-1)}{\beta-1+c}\,  .
\end{align}
Finally,     $s_0 = 1$ if and only if $b=1$, so 
 the supremum is a maximum
 if $b < 1$. 
 \end{proof}

\begin{proof}[Proof of Theorem \ref{t:ndcor}]
Applying Proposition \ref{p:mainset} and using that $H_{\Sigma}= 0$ gives that 
\begin{align}	\label{e:deltadd}
	d\, \Delta_{\Gamma} \, d + d \, H_{\Gamma} \, \langle \nn , \nn_{\Gamma} \rangle 
	&\leq - |\nabla_{\Gamma} d|^2 \, \frac{\alpha_1}{1+\alpha_1} - \sum_{i=1}^n \frac{\alpha_i^2}{1+ \alpha_i} \notag \\
	&=
	-\left(  |\nabla_{\Gamma} d|^2 \, \frac{\alpha_1}{1+\alpha_1} +  \frac{\alpha_1^2}{1+ \alpha_1} 
	\right) - \sum_{i=2}^n \frac{\alpha_i^2}{1+ \alpha_i} \, .
\end{align}
By minimality of $\Sigma$, $\alpha_1 \leq 0$ and $\sum_{i=2}^n \alpha_i = - \alpha_1$.  Thus, 
  Lemma \ref{l:1802} gives that
\begin{align}	\label{e:from1802}
	- \sum_{i=2}^n \frac{\alpha_i^2}{1+ \alpha_i}  \leq - \frac{ \alpha_1^2}{n-1-\alpha_1} \, . 
\end{align}
Setting $s = |\alpha_1|$ and using \eqr{e:from1802} in 
 \eqr{e:deltadd}, we get that 
\begin{align}	\label{e:usee}
	d\, \Delta_{\Gamma} \, d 
	+ d \, H_{\Gamma} \, \langle \nn , \nn_{\Gamma} \rangle  \leq
	   |\nabla_{\Gamma} d|^2 \, \frac{s}{1-s} -  \frac{s^2}{1-s} 
	  -  \frac{s^2}{n-1+ s}
	  \, .
\end{align}
The second claim \eqr{e:alsohave} follows from this and   Lemma \ref{l:Fn} with $b = |\nabla_{\Gamma} d|$ and $\beta = n$.

To prove the first claim, use  Lemma \ref{l:Fn} with $b = |\nabla_{\Gamma} d|$ and $\beta = 2n$ to get
\begin{align}
	|\nabla_{\Gamma} d|^2 \, \frac{s}{1-s} - \frac{s^2}{1-s} - \frac{s^2}{2n-1 +s}  = F_{b,\beta}(s) \leq \frac{2n-1}{2n} \,  |\nabla_{\Gamma} d|^4  \, .
\end{align}
Using this in the bound \eqr{e:usee} gives that 
 \begin{align}	 
	 	d\, \Delta_{\Gamma} \, d 
		+ d \, H_{\Gamma} \, \langle \nn , \nn_{\Gamma} \rangle  &\leq  \frac{2n-1}{2n} \,  |\nabla_{\Gamma} d|^4   + \left(  \frac{s^2}{2n-1 +s} - \frac{s^2}{n-1 +s} \right) \notag \\
		&\leq \frac{2n-1}{2n} \,  |\nabla_{\Gamma} d|^4   -   \frac{n \, s^2}{(2n-1 +s)\, (n-1 +s)} 
		\leq  \frac{2n-1}{2n} \,  |\nabla_{\Gamma} d|^4   -   \frac{s^2}{2n} 
				 \, . \notag
\end{align}
 Finally, since $\lambda_1$ is the most negative eigenvalue and the eigenvalues sum to zero (by minimality of $\Sigma$), there is a dimensional constant $C$ so that $|A|^2 \leq C \, \lambda_1^2$.   
\end{proof}

\section{Viscosity solutions}

We turn  to the proof of Theorem \ref{c:minimal}.  We will actually prove a stronger statement given in Theorem \ref{t:visc} below.
To state this 
 given a point $x$, define $\cA (x)$ by
\begin{align}
	\cA (x) =  \sup \, \{  |A| (y)\, | \, y \in \Sigma {\text{ with }} |x-y| = d(x) \} \, .
\end{align}
Using the convention on the sign of $H$ here, the mean curvature flow (MCF) $\Gamma_t$ satisfies
\begin{align}
	\partial_t \, x = H_{\Gamma_t} \, \nn_{\Gamma_t} \, .	\label{e:MCFGamma}
\end{align}

\begin{Thm}	\label{t:visc}
Suppose that $\Sigma$ is minimal and $\Gamma$ is a MCF.
There exists $\kappa > 0$ depending only on $n$, so that 
\begin{align}	\label{e:visc1}
	  -d\, \Box_{\Gamma} \,  d    \leq  \frac{2n-1}{2n} \, |\nabla_{\Gamma}  d|^4   - \kappa \,  \cA^2    \, d^2
\end{align}
 in the viscosity sense.  Moreover, we also have in the viscosity sense that
 \begin{align}	\label{e:visc1AA}
	  -d\, \Box_{\Gamma} \,  d    \leq  \frac{n-1}{n} \, |\nabla_{\Gamma}  d|^4   \, .
\end{align}

\end{Thm}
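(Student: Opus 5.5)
The plan is a Calabi-type barrier argument that reduces everything to the smooth computation in Theorem \ref{t:ndcor}. First I fix the meaning of the statement. We must show that whenever a smooth function $\varphi$ on the flow touches $d$ from below at $(p_0,t_0)$, with $p_0\in\Gamma_{t_0}$, $\varphi\le d$ nearby and equality at $(p_0,t_0)$, then \eqr{e:visc1} and \eqr{e:visc1AA} hold with $d$ replaced by $\varphi$. Touching from below is the correct side for supersolutions of $\Box$.

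The first step is to build a smooth upper barrier for $d$ at $p_0$. Choose a closest point $q\in\Sigma$ with $|p_0-q|=d(p_0)$ and $|A|(q)$ as close to $\cA(p_0)$ as desired; since $\Sigma$ is proper, the set of closest points is compact, so the supremum is attained. Move $q$ slightly toward $p_0$ along the segment: set $p_\epsilon=p_0+\epsilon(q-p_0)/|q-p_0|$ and let $d_\epsilon(x)$ be the distance from $x$ to $\Sigma$ computed near $q$. More concretely, take $d_\epsilon(x)=\epsilon+\dist(x,\Sigma_q)$ near $p_0$, where $\Sigma_q$ is a small graphical piece of $\Sigma$ around $q$.

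For the point $p_0$, $q$ is then the unique closest point on the piece. For points on the open segment the sharper fact holds that $\alpha_1>-1$ strictly: the inequality $-1\le\alpha_1$ at $p_0$ becomes strict for $d(p_0)-\epsilon$. Hence the function $\tilde d(x)=\epsilon+\dist(x,\Sigma_q)$ is smooth near $p_0$, satisfies $\tilde d\ge d$, and has $\tilde d(p_0)=d(p_0)$. This is the standard Calabi trick, and it places us in the setting of Lemma \ref{l:GTF}, with $\cD_0$-type formulas valid for $\dist(\cdot,\Sigma_q)$ at $p_0$.

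Next comes the touching argument. We have $\varphi\le d\le\tilde d$ with equality at $(p_0,t_0)$. Since $\tilde d$ is static in time, $\tilde d-\varphi$ has a local minimum at $(p_0,t_0)$ along the flow. Therefore $\nabla_\Gamma\varphi=\nabla_\Gamma\tilde d$ and $\Delta_\Gamma\varphi\le\Delta_\Gamma\tilde d$. For the time derivative along the normal motion, $\partial_t\varphi\ge \frac{d}{dt}\tilde d(x(t))=\langle\nabla\tilde d, H_\Gamma\nn_\Gamma\rangle=-H_\Gamma\langle\nn,\nn_\Gamma\rangle$; this needs care only at an initial time, which is excluded. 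Hence
\[
-\varphi\,\Box_\Gamma\varphi\le \tilde d\,\Delta_\Gamma\tilde d+\tilde d\,H_\Gamma\langle\nn,\nn_\Gamma\rangle ,
\]
using $\varphi=\tilde d>0$ at the point. The right-hand side is exactly what Theorem \ref{t:ndcor} controls.

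I must, however, redo Theorem \ref{t:ndcor} for $\tilde d$, whose "distance" is $d_0-\epsilon$ shifted by $\epsilon$. Write $\tilde d=\epsilon+\delta$ with $\delta$ the local distance. The Hessian of $\tilde d$ equals that of $\delta$, with eigenvalues $\lambda_i/(1+\delta\lambda_i)$. Multiplying by $\tilde d$ rather than $\delta$ gives $\tilde d\,\lambda_i/(1+\delta\lambda_i)$, which converges to $d\lambda_i/(1+d\lambda_i)$ as $\epsilon\to0$ whenever $\alpha_1>-1$. The difficulty is the case $\alpha_1=-1$ at $p_0$, where these terms blow up; this is the main obstacle. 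I would handle it by applying the inequality chain of Proposition \ref{p:mainset} and Theorem \ref{t:ndcor} directly to $\delta$ at $p_0$, where it holds with $\delta=d-\epsilon$ and $\alpha_i^\epsilon=(d-\epsilon)\lambda_i>-1$. This yields
\[
\delta\,(\Delta_\Gamma\tilde d+H_\Gamma\langle\nn,\nn_\Gamma\rangle)\le \tfrac{2n-1}{2n}|\nabla_\Gamma d|^4-\kappa|A|^2\delta^2 ,
\]
with $|\nabla_\Gamma\tilde d|=|\nabla_\Gamma d|$ since the normals agree. The bound is uniform in $\epsilon$ because the right side is. Combining with the touching inequality, multiplying by $d/\delta\to1$, and letting $\epsilon\to0$ then gives \eqr{e:visc1}, and likewise \eqr{e:visc1AA}. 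If the right side is negative, the factor $d/\delta>1$ only helps after rearranging, or one takes the limit directly. Choosing $q$ to nearly realize $\cA(p_0)$ produces the $\cA^2$ term.
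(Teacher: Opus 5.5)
Your proposal is correct and is essentially the paper's proof: translate (a piece of) $\Sigma$ by $\epsilon$ toward $p_0$ so that $p_0\in\cD_0^\epsilon$, and use $\varphi\le d\le d_\epsilon+\epsilon$ to transfer the spatial derivatives. Then apply Theorem \ref{t:ndcor} to $d_\epsilon$ at $p_0$, get the time derivative from a static upper barrier with gradient $-\nn$ (you use $\tilde d$, the paper uses $|x(t)-y_0|$), and let $\epsilon\to0$; your factor $\tilde d/\delta=d/(d-\epsilon)$ is exactly the paper's substitution $d_\epsilon=\phi-\epsilon$.

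One small fix is needed. As literally written, $\tilde d=\epsilon+\dist(\cdot,\Sigma_q)$ gives $\tilde d(p_0)=d(p_0)+\epsilon$. You must instead take the distance to the translate $\Sigma_q+\epsilon\,\frac{p_0-q}{|p_0-q|}$, which is what your later formula $\delta(p_0)=d-\epsilon$ presupposes. Also, the gradient on the right-hand side should be read as $|\nabla_\Gamma\varphi|=|\nabla_\Gamma\tilde d|$, since $d$ itself need not be differentiable at $p_0$.
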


 We will use a barrier argument in the spirit of \cite{Ca}, cf. \cite{CrIL}.

\begin{Lem}	\label{l:translate}
Suppose that $p \notin \Sigma$ and $q \in \Sigma$ satisfies $d(p) = |p-q|$. Given $\epsilon \in (0,d(p))$, let $\Sigma_{\epsilon}$ be the translated hypersurface
\begin{align}
	\Sigma_{\epsilon} = \Sigma + \epsilon \, \frac{p-q}{|p-q|}  
\end{align}
and $d_{\epsilon}$   the distance function to $\Sigma_{\epsilon}$. If  $\cD^{\epsilon}_0$ is the $\cD_0$ for $d_{\epsilon}$, then $p \in \cD_0^{\epsilon}$.
\end{Lem}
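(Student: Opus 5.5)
Set $\nn = \frac{q-p}{|q-p|}$, so that $\Sigma_\epsilon = \Sigma - \epsilon\,\nn$ and $q_\epsilon = q - \epsilon\,\nn$ lies on $\Sigma_\epsilon$ at distance $d(p)-\epsilon$ from $p$. The plan is to verify, in order, the defining conditions of $\cD_0^\epsilon$ at $p$:
\begin{enumerate}
\item $q_\epsilon$ is the unique closest point of $\Sigma_\epsilon$ to $p$;
\item $d_\epsilon$ is differentiable at $p$;
\item the first normalized principal curvature at $q_\epsilon$ is strictly greater than $-1$.
\end{enumerate}

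\textbf{Step 1 (closest point and uniqueness).} Translating back by $\epsilon\,\nn$, a point $y+\epsilon\,\nn$ with $y \in \Sigma_\epsilon$ lies in $\Sigma$, so $d_\epsilon(p) = \dist(p+\epsilon\,\nn,\Sigma)$. The point $p' = p + \epsilon\,\nn$ lies on the segment from $p$ to $q$, at distance $d(p)-\epsilon$ from $q$. By the triangle inequality, $\dist(p',\Sigma) \geq d(p) - \epsilon$, since otherwise $d(p) < \epsilon + (d(p)-\epsilon)$ would be violated. Hence $q$ is a closest point to $p'$.

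For uniqueness, suppose $y \in \Sigma$ with $|p'-y| = d(p)-\epsilon$. Then
\[
|p-y| \leq |p-p'| + |p'-y| = d(p),
\]
so equality holds throughout. This forces $p$, $p'$, $y$ to be collinear with $p'$ between $p$ and $y$, which gives $y=q$. Thus $q_\epsilon$ is the unique nearest point of $\Sigma_\epsilon$ to $p$. (If $\Sigma$ is immersed, $q$ is understood as a point of the image; the uniqueness argument is unaffected.)

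\textbf{Step 2 (curvature bound).} The principal curvatures of $\Sigma_\epsilon$ at $q_\epsilon$ equal those of $\Sigma$ at $q$, namely $\lambda_i$, with the same outward normal $\nn$. Since $B_{d(p)}(p)\cap\Sigma=\emptyset$, the second derivative test from the setup gives $d(p)\,\lambda_1 \geq -1$, i.e.\ $\lambda_1 \geq -1/d(p)$. Therefore
\[
\alpha_1^\epsilon = (d(p)-\epsilon)\,\lambda_1 \geq -\frac{d(p)-\epsilon}{d(p)} > -1 .
\]
This strict inequality is the whole point of the translation.

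\textbf{Step 3 (differentiability) — the main obstacle.} One has to rule out nearby points having far-away nearest points. I would show that $d_\epsilon$ is in fact smooth near $p$ by the standard normal exponential map argument (as on page 355 of \cite{GiTr}). The map $(y,s)\mapsto y - s\,\nn(y)$ on $\Sigma_\epsilon$ near $q_\epsilon$ has nonsingular differential at $(q_\epsilon, d(p)-\epsilon)$ precisely because $1+\alpha_i^\epsilon>0$. This yields a local inverse, and hence a smooth local candidate distance $\tilde d$ near $p$.

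It remains to check that $d_\epsilon = \tilde d$ near $p$. This follows from uniqueness of the closest point together with properness of $\Sigma$. Properness gives compactness, so for $x_k \to p$, any nearest points $y_k$ subconverge to a nearest point of $p$, which must be $q_\epsilon$. Hence for $x$ near $p$, nearest points lie in the chart near $q_\epsilon$, where they are given by the local inverse. Consequently $d_\epsilon$ is differentiable (indeed $C^2$) at $p$, and $p \in \cD_0^\epsilon$.
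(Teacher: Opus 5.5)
Your proposal is correct and follows the paper's proof. Uniqueness of $q_\epsilon$ comes from the fact that proper subsegments of a minimizing segment are uniquely minimizing (your triangle-inequality argument). The strict bound $\alpha_1^\epsilon>-1$ comes from $d(p)\lambda_1\ge -1$ together with $d_\epsilon(p)=d(p)-\epsilon<d(p)$. Smoothness of $d_\epsilon$ at $p$ holds because $p$ lies inside the normal exponential tube of $\Sigma_\epsilon$.

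Your Step 3 spells out what the paper leaves to a citation: the normal exponential map is nondegenerate since $1+\alpha_i^\epsilon>0$, and properness/compactness forces nearest points of nearby points to stay near $q_\epsilon$. Your argument also does not use the bound $\lambda_1\le 0$ coming from minimality, which the paper records in its proof.
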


\begin{proof}
Since $q$ is the closest point in $\Sigma$ to $p$, the first   derivative test  gives that $\partial B_{|p-q|}(p)$ is tangent to $\Sigma$ at $q$ and, moreover, 
$B_{|p-q|}(p) \cap \Sigma = \emptyset$.   This implies that $q$ is a smooth point of $\Sigma$ (i.e., it is embedded there).  
The second derivative test implies that 
\begin{align}	\label{e:x1d}
	0 \geq \lambda_1 \, d(p) \geq - 1 \, ,
\end{align}
where $\lambda_1$ is the lowest principal curvature for $\Sigma$ at $q$.

Since proper subsegments of minimizers are uniquely minimizing, $p$ is inside the normal exponential tube for $\Sigma_{\epsilon}$ for any $\epsilon > 0$ and, thus, $d_{\epsilon}$ is smooth at $p$  (cf. \cite{HKa}).  Moreover, the translate of $q$ is the unique closest point.  It remains to check that $p \in \cD^{\epsilon}_0$. 
However, this follows from \eqr{e:x1d} since $d^{\epsilon} (p) < d(p)$. 
\end{proof}

\vskip1mm
We turn next to the proof of Theorem \ref{t:visc}.  Before doing so, it is useful to recall the notion of a viscosity supersolution. To show that $u$ satisfies
\begin{align}
	- \Box \, u \leq  F(u,\nabla u) \notag
\end{align}
in the viscosity sense, we must show the following:  For any point $(x_0 , t_0)$ and any $C^2$ function $\phi (x,t)$ that satisfies
$$
	\phi (x,t) \leq u(x,t) {\text{ for }} t \leq t_0 {\text{ with equality at }} (x_0 , t_0) \, , 
$$
then we must have 
$$
	- \Box \, \phi (x_0 , t_0)  \leq  F(u(x_0 ,t_0),\nabla u (x_0 , t_0))  \, .
$$
The function $\phi$   is called a parabolic viscosity barrier.

\vskip1mm

\begin{proof}[Proof of Theorem \ref{t:visc}]
The two claims follow similarly using  Theorem \ref{t:ndcor}; we will give the details for the first claim.

Let  $\kappa = \kappa (n) > 0$ be given by Theorem \ref{t:ndcor}.    Fix  a time $t_0$, set $\Gamma = \Gamma_{t_0}$, and fix $x_0 \in \Gamma$. Let $y_0$ be a closest point in $\Sigma$ to $x_0$ (this exists  since $d$ is continuous 
and  $\Sigma$ is proper).  Given $\epsilon \in (0, d(x_0))$,  define $\Sigma_{\epsilon}$, 
   $d_{\epsilon}$, and $\cD^{\epsilon}_0$   as in Lemma \ref{l:translate}, so  that $x_0 \in \cD^{\epsilon}_0$, 
\begin{align}	\label{e:de1}
	d_{\epsilon} (x_0) = d (x_0) - \epsilon \, .
\end{align}
Since every point in $\Sigma_{\epsilon}$ is distance at most $\epsilon$ to $\Sigma$, we also have
\begin{align}	\label{e:de2}
	d(x) = \dist (x, \Sigma) \leq  d_{\epsilon} (x) + \epsilon  \, .
\end{align}
  Thus, Theorem \ref{t:ndcor} gives at $x_0$ that 
\begin{align}	\label{e:visc1e}
	d_{\epsilon} \, \Delta \, d_{\epsilon}   + d_{\epsilon} \, H_{\Gamma} \,  \langle \nn , \nn_{\Gamma} \rangle \leq \left( \frac{2n-1}{2n} \right) \, |\nabla d_{\epsilon}|^4    - \kappa \,   A^2(y_0)  \, d_{\epsilon}^2  \, .
\end{align}
Set $h_{\epsilon} = d_{\epsilon} + \epsilon$ (cf. \cite{Ca} for the distance to a point).

As in \cite{CrIL}, let $\phi$ be a parabolic viscosity barrier for $d$ at $x_0$, so that $\phi$ is $C^2$, $\phi (x_0,t_0) = d(x_0,t_0)$ and $\phi (x,t)\leq d (x,t)$  for $t\leq t_0$ and $x$ in a neighborhood $U$ of $x_0$. 
Using \eqr{e:de1} and \eqr{e:de2}, we see that
\begin{align}
	\phi (x_0,t_0) = h_{\epsilon} (x_0) {\text{ and }} \phi (x,t_0)\leq h_{\epsilon} (x) {\text{ in }} U \, .
\end{align}
It follows that the $C^2$ function $h_{\epsilon}(x) - \phi (x,t_0)$ has a local minimum at $x_0$, so the first and second derivative tests give that
$\nabla_{\Gamma} \phi   (x_0,t_0)= \nabla_{\Gamma} h_{\epsilon} (x_0) = \nabla_{\Gamma} d_{\epsilon} (x_0. t_0)$ and 
\begin{align}
	\Delta_{\Gamma}  \phi  (x_0,t_0) &\leq \Delta_{\Gamma} h_{\epsilon} (x_0)   =  \Delta_{\Gamma} d_{\epsilon}  (x_0,t_0) \, .
\end{align}
Combining this with \eqr{e:visc1e} gives at $x_0$  that
\begin{align}
	 \Delta_{\Gamma} \,  \phi    &\leq  -  d_{\epsilon} \, H_{\Gamma} \,  \langle \nn , \nn_{\Gamma} \rangle +
	  \left( \frac{2n-1}{2n} \right) \, \frac{ |\nabla_{\Gamma} d_{\epsilon}|^4}{d_{\epsilon}}  
	      - \kappa \,   A^2(y_0)  \, d_{\epsilon} \notag \\
	     & =  -(\phi - \epsilon) \, H_{\Gamma} \,  \langle \nn , \nn_{\Gamma} \rangle  + \left( \frac{2n-1}{2n} \right) \, \frac{ |\nabla_{\Gamma}  \phi|^4}{\phi - \epsilon}       - \kappa \,   A^2(y_0)  \, (\phi -\epsilon)
	\, .
\end{align}
Since this holds for every $\epsilon > 0$, we can take the limit as $\epsilon \to 0$ to get 
\begin{align}	\label{e:uphered}
	 \Delta_{\Gamma} \,   \phi    &\leq    -\phi  \, H_{\Gamma} \,  \langle \nn , \nn_{\Gamma} \rangle  +  \left( \frac{2n-1}{2n} \right) \, \frac{ |\nabla_{\Gamma}  \phi|^4}{\phi}       - \kappa \,   A^2(y_0)  \,\phi 
	\, .
\end{align}
Let $x(t)$ be the flow line through $x_0$ with $x(t_0) = x_0$, so  that $d(x(t) , t) \leq |x(t) - y_0|$ with equality at $t=t_0$.  Thus, we see for $t \leq t_0$  that
\begin{align}
	\phi (x(t) , t)\leq d(x(t) , t) \leq |x(t) - y_0| {\text{ with equality at }} t = t_0 \, .
\end{align}
From this, we conclude that 
 \begin{align}
 	\phi_t (x_0 ,t_0) &\geq \partial_t \big|_{t=t_0} \, |x(t) - y_0| = \langle x_t (t_0) - y_0 , \frac{ x(t_0) - y_0}{d} \rangle
	 =       - H_{\Gamma_{t_0}} \,  \langle \nn_{\Gamma_{t_0}} , \nn \rangle \, ,
 \end{align}
 where the last equality on the second line used the mean curvature flow equation.  Combining this with \eqr{e:uphered} gives
\begin{align}	 
	- \Box_{\Gamma} \,   \phi    &\leq    \left( \frac{2n-1}{2n} \right) \, \frac{ |\nabla_{\Gamma}  \phi|^4}{\phi}       - \kappa \,   A^2(y_0)  \,\phi 
	\, .
\end{align}
Since this holds for any closest point $y_0$, this completes the proof.
\end{proof}

\begin{proof}[Proof of Theorem \ref{c:minimal}] Given $p \in (0 , \frac{1}{n}]$, 
Theorem \ref{t:visc} and the chain rule give that
\begin{align}
	-\frac{1}{p} \, \Box_{\Gamma} \,  d^p = d^{p-2} \, \left( d \, \Box_{\Gamma} \, d + (p-1) \, |\nabla_{\Gamma} d|^2
	\right)
	  \leq d^{p-2} \, |\nabla_{\Gamma} d|^2 \, \left( p - \frac{1}{n} \right) \leq 0 \, .
\end{align}
\end{proof}

We specialize the next corollary to the case where $\Gamma$ is minimal:

\begin{Cor}	\label{c:cutoff}
There exists $\kappa > 0$ depending only on $n$ so that  if $\Gamma$ is minimal and 
$\phi$ has  compact  support, then
\begin{align}
	\int  \phi^2 \, \left(   |\nabla_{\Gamma} \log d|^2  + 4n \, \kappa \, \cA^2 \right)  &\leq 16 \,  n^2 \, \int |\nabla_{\Gamma} \phi|^2 \, .
\end{align}
\end{Cor}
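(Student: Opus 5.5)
The plan is to combine the static case of Theorem \ref{t:visc} with a stability-type integration by parts for $\log d$. Since $\Gamma$ is minimal, $H_\Gamma=0$ and $\Gamma$ is a static MCF. Therefore \eqr{e:visc1} reads
\begin{align}
	d\,\Delta_\Gamma d \leq \frac{2n-1}{2n}\,|\nabla_\Gamma d|^4 - \kappa\,\cA^2 d^2
\end{align}
in the viscosity sense. Since $|\nabla_\Gamma d|\leq 1$, this gives $d\,\Delta_\Gamma d \leq \frac{2n-1}{2n}|\nabla_\Gamma d|^2-\kappa\cA^2 d^2$. Setting $w=\log d$, the chain rule (valid for viscosity supersolutions under an increasing $C^2$ change of variable) gives
\begin{align}
	\Delta_\Gamma w = \frac{\Delta_\Gamma d}{d}-|\nabla_\Gamma w|^2 \leq -\frac{1}{2n}\,|\nabla_\Gamma w|^2-\kappa\,\cA^2
\end{align}
in the viscosity sense.

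The second step is to pass from this viscosity inequality to a distributional one. Then I would multiply by $\phi^2$ and integrate:
\begin{align}
	\int \phi^2\left(\frac{1}{2n}|\nabla w|^2+\kappa\cA^2\right)\leq -\int\phi^2\Delta w = 2\int \phi\,\langle\nabla\phi,\nabla w\rangle .
\end{align}
Absorbing with $2ab\leq \frac{1}{4n}a^2+4n\,b^2$ leaves
\begin{align}
	\int\phi^2\left(\frac{1}{4n}|\nabla w|^2+\kappa\cA^2\right)\leq 4n\int|\nabla\phi|^2 .
\end{align}
Multiplying by $4n$ gives exactly $\int\phi^2(|\nabla\log d|^2+4n\kappa\cA^2)\leq 16n^2\int|\nabla\phi|^2$, with the same $\kappa$ as in Theorem \ref{t:ndcor}.

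The main obstacle is justifying the integration by parts, since $w$ is only locally Lipschitz on $\Gamma$: $d>0$ on $\Gamma$ because $\Gamma$ is disjoint from $\Sigma$ and $\Sigma$ is proper, hence closed. I would use that $d$ is semiconcave on compact subsets of $\Gamma$. This follows from the translation barriers of Lemma \ref{l:translate}: at each point, $d\leq d_\epsilon+\epsilon$ with $d_\epsilon$ smooth and with Hessian bounded above uniformly, because $\alpha_1\geq -1$ and $d_\epsilon$ stays comparable to $d$. Semiconcavity gives the following.
\begin{itemize}
\item The distributional Laplacian of $d$ is a measure whose singular part is nonpositive.
\item $d$ is twice differentiable a.e. (Alexandrov), and at such points the viscosity inequality holds pointwise.
\end{itemize}
Hence the absolutely continuous part of $\Delta w$ satisfies the pointwise bound a.e., and the singular part only helps. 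The left side of the computation above is therefore bounded by $\int\phi^2\,d(-\Delta w)$ as a measure. This in turn equals $2\int\phi\langle\nabla\phi,\nabla w\rangle$ by the definition of the distributional Laplacian for the Lipschitz function $w$ and the $C^1_c$ test function $\phi^2$.

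An alternative route to the same distributional inequality is to mollify $w$ via an inf-convolution. One point still needs checking: $\cA$ is only upper semicontinuous, so the term $\kappa\cA^2$ should be handled by its a.e. values. Since at points of $\cD_0$ the closest point is unique, this causes no loss.
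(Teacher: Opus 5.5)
Your proposal is correct and is essentially the paper's argument: the same viscosity inequality $\Delta_\Gamma \log d \le -\frac{1}{2n}|\nabla_\Gamma \log d|^2-\kappa\,\cA^2$ obtained from Theorem \ref{t:visc} with $|\nabla_\Gamma d|\le 1$, the same integration against $\phi^2$, and the same absorption with identical constants. The only difference is the viscosity-to-distribution step, which the paper settles by citing \cite{I}, whereas your semiconcavity/Alexandrov decomposition justifies it directly for the nonlinear right-hand side with the merely upper semicontinuous $\cA$. Your closing remark about $\cD_0$ is not needed, and as stated it is slightly off: $\Gamma\setminus\cD_0$ need not be null, but the barrier argument gives the inequality with $\cA$ itself at every point, hence at every Alexandrov point.
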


\begin{proof}
Theorem \ref{t:visc} and the chain rule give that 
\begin{align}	\label{e:visc1AAa}
	  \Delta_{\Gamma} \, \log d    \leq - \frac{1}{2n} \, |\nabla_{\Gamma} \log d|^2   - \kappa \,  \cA^2    
\end{align}
in the viscosity sense and thus also in the sense of distributions, \cite{I}. 
The divergence theorem and  \eqr{e:visc1AAa} give that
\begin{align}
	\int  \phi^2 \, \left(  \frac{|\nabla_{\Gamma} \log d|^2}{2n} + \kappa \, \cA^2 \right)  &\leq -\int \phi^2 \, \Delta_{\Gamma}\,  \log d \leq  2 \, \int |\phi | \, |\nabla_{\Gamma} \phi| \, |\nabla_{\Gamma} \log d| \notag
	\\
	&\leq \frac{1}{4n} \, \int \phi^2 \, |\nabla_{\Gamma} \log d|^2 + 4n \, \int |\nabla_{\Gamma} \phi|^2 \, .
\end{align}
\end{proof}

 \section{Minimal surfaces in a half-space}

We turn now to the case of minimal surfaces in $\RR^3$ and the  half-space theorem of Hoffman and Meeks, \cite{HM,MP,P}. 
The original proof used catenoid barriers and the maximum principle. We see next that it is also a consequence of the differential inequality for the distance function. 

\begin{proof}[Proof of Theorem \ref{c:HM}]
By assumption,  $\Sigma \subset \{ x_3 > 0 \}$ is a proper minimal immersion.  It follows that $d > 0$ on $\Gamma =\{ x_3 = 0 \}$.   On the other hand, Corollary  \ref{c:minimalC}
gives that 
	 $\Delta_{\Gamma} \,   \sqrt{d} \leq 0$ in the viscosity sense and, thus, $  \sqrt{d}$ is a positive superharmonic function on the plane.  Since the plane is parabolic (see Lemma \ref{l:paraR2} below),   $ d \equiv r$ must be  constant on $\Gamma$.     It follows that:
\begin{enumerate}
\item  For each $p \in \Gamma$, 
there is at least one point in   $\overline{B_{r} (p)} \cap \Sigma \cap \{ x_3 > 0 \}$.   
\item $\Sigma$ does not intersect $\{ x_3 < r \}$.
\end{enumerate}
The second claim (2) follows since this set is too close to $\Gamma$.  Combining (1) and (2) gives that 
 $\Sigma$ must  be the parallel plane $\{ x_3 = r \}$. 
\end{proof}

We used the parabolicity of $\RR^2$ in the proof; for completeness, we state the form of this that we need below:

\begin{Lem}	\label{l:paraR2}
If $u > 0$ is a continuous function on $\RR^2$ that satisfies $\Delta \, u \leq 0$ in the viscosity sense, then $u$ is constant.
\end{Lem}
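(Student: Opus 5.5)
The plan is to compare $u$ with logarithmic barriers and use the viscosity definition only at touching points. First, replace $u$ by its lower semicontinuous envelope; since $u$ is assumed continuous, nothing changes.

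Fix two points $x_1, x_2 \in \RR^2$; the goal is to show $u(x_2) \geq u(x_1)$. By symmetry this gives $u$ constant. Set $m = \min_{\overline{B_1(x_1)}} u > 0$, which is positive by continuity and compactness. For $\delta > 0$ and $R > 1$ with $|x_2 - x_1| < R$, define the harmonic function on the annulus $1 \leq |x - x_1| \leq R$
\begin{align}
	w_{\delta}(x) = m \, \left( 1 - \delta \, \log |x - x_1| \right) \, .
\end{align}
Then $w_\delta \leq m \leq u$ on $|x - x_1| = 1$. Choosing $R = \e^{1/\delta}$ makes $w_\delta = 0 < u$ on $|x - x_1| = R$; this is exactly where positivity of $u$ enters.

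The key step is a comparison principle: $u \geq w_\delta$ on the annulus. Suppose not. Then $u - w_\delta$ has a negative interior minimum at some point $y$. To get a strict inequality, perturb to $w_\delta + \eta\,(|x - x_1|^2 - R^2)$ with small $\eta > 0$. This function is strictly subharmonic ($\Delta = 4\eta$), still lies below $u$ on the boundary, and $u$ minus it still has a negative interior minimum at some point $y'$. Shifting it by a constant gives a $C^2$ function touching $u$ from below at $y'$. The viscosity inequality $\Delta u \leq 0$ then forces $4\eta \leq 0$, a contradiction.

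Hence $u(x_2) \geq m\,(1 - \delta \log|x_2 - x_1|)$ for every small $\delta$. Letting $\delta \to 0$ gives $u(x_2) \geq m = \min_{\overline{B_1(x_1)}} u$. Applying the same argument with balls $B_r(x_1)$ (using $\log(|x - x_1|/r)$) and letting $r \to 0$ gives $u(x_2) \geq u(x_1)$ by continuity. Swapping the roles of $x_1$ and $x_2$ shows $u$ is constant.

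The only delicate point is the comparison step in the viscosity setting. It is handled by the strict perturbation above, which avoids needing any general viscosity comparison theorem.
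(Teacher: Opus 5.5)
Your argument is correct, but it takes a different route from the paper's. The paper never works with test functions in this lemma. It quotes Ishii's theorem that a viscosity supersolution of $\Delta u \le 0$ is also a distributional supersolution. It then mollifies, obtaining smooth positive superharmonic functions $u \ast \psi$ (convolution with $\psi \ge 0$ commutes with $\Delta$ and preserves the sign). It applies the classical smooth statement, proved by the logarithmic cutoff trick, to conclude that each $u \ast \psi$ is constant. Finally it lets $\psi$ run through an approximate identity.

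You instead stay at the viscosity level and run the classical barrier proof of the parabolicity of $\RR^2$ directly:
\begin{itemize}
\item The harmonic functions $m_r\,(1-\delta\log(|x-x_1|/r))$ on the annuli $r\le |x-x_1|\le r\,\e^{1/\delta}$ serve as lower barriers.
\item The strict perturbation $\eta\,(|x-x_1|^2-R^2)$ converts a failure of comparison into a single $C^2$ function touching $u$ from below with $\Delta = 4\eta>0$, which contradicts the definition directly.
\item Positivity of $u$ is used on the outer circle, and the limits $\delta\to 0$, then $r\to 0$, finish the argument.
\end{itemize}
One cosmetic point: the bound $u(x_2)\ge m\,(1-\delta\log|x_2-x_1|)$ only makes sense for $|x_2-x_1|\ge 1$. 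Inside the ball you simply have $u(x_2)\ge m$, and in the $r\to 0$ step you may take $r<|x_2-x_1|$ anyway, so nothing is lost.

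What your route buys is self-containment. It uses only the definition of a viscosity supersolution and the explicit fundamental solution of $\RR^2$, with no appeal to the viscosity-to-distribution equivalence or to regularization. It also adapts to other operators that admit explicit strict barriers, even when no distributional theory is available. The paper's route separates the regularity issue from the Liouville property, so the latter can be quoted in its standard smooth form. The price is the external theorem of Ishii and the use of convolution, which relies on the translation invariance of $\RR^2$.
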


\begin{proof}
The classical case where $u$ is 
  twice differentiable follows from the logarithmic cutoff trick (see, e.g., Proposition $1.37$ in \cite{CM1}).

Suppose that $u>0$ is continuous and  $\Delta \, u \leq 0$ in the viscosity sense and, thus, also in the distributional sense (by theorem $1$ in \cite{I}).  Let $\psi \geq 0$ be a smooth function with compact support and integral one
and then let $u_{\psi} = u \ast \psi$ be the convolution of $u$ and $\psi$.  It follows that $u_{\psi}$ is smooth and positive.  
Since $\psi \geq 0$ and $\Delta \, u \leq 0$ in the distributional sense, it follows that $\Delta u_{\psi} = (\Delta \, u) \ast \psi \leq 0$ as well. By the first part of the proof, we see that $u_{\psi}$ must be constant.  We now take a sequence of $\psi$'s that converges to the $\delta$ function
(i.e., an approximate identity) to see that $u$ itself is constant.
\end{proof}

 \section{Improved gradient estimate}
 
 In this section, $\Sigma^n \subset \{ x_{n+1} > 0 \} \subset \RR^{n+1}$ is a proper immersed minimal hypersurface in a half-space.   
  We will be interested in the restriction of $d$ to $\Gamma = \{ x_{n+1} = 0 \}$. 
 
  \vskip1mm
 Given any distance function, the norm of  its gradient is at most one.  
 The next result  is an improved gradient estimate, showing that  $|\nabla_{\Gamma} d|$ is strictly below one on $\Gamma \cap \cD_0$.
  
 \begin{Thm}	\label{p:minimal}
 At each  point in $d \in \Gamma \cap \cD_0$, we have that  $|\nabla_{\Gamma} d|^2 \leq (1-\epsilon_n) < 1$
  with $\epsilon_n =   \frac{1}{[4\, (n-1)]^4} >0$.
 \end{Thm}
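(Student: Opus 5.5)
The plan is to reduce the statement to a geometric fact about where the closest point can sit, and then prove that fact with a catenoid barrier. By Lemma \ref{l:GTF}, at $p \in \Gamma\cap\cD_0$ we have $\nabla^{\RR^{n+1}} d = -\nn$ with $\nn = (q-p)/d$, where $q=q_p$ is the unique closest point. Hence
\begin{align}
|\nabla_\Gamma d|^2 = 1 - \langle \nn , e_{n+1}\rangle^2 = 1 - \frac{q_{n+1}^2}{d(p)^2} \, .
\end{align}
So it suffices to show that the height $h = q_{n+1}$ of the closest point satisfies $h \geq \epsilon_n^{1/2}\, d(p) = [4(n-1)]^{-2}\, d(p)$. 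Scaling and translating, I will take $p=0$ and $d(p)=1$. The closed unit ball $\overline{B_1}$ then meets $\Sigma$ only at $q$, and $q$ lies on the unit sphere at height $h$ above the equatorial plane $\Gamma$.

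I will argue by contradiction, assuming $h < [4(n-1)]^{-2}$. The point $q$ then lies in a thin region near the equator. In that region, $\Sigma$ is confined to $W=\{x_{n+1}>0\}\setminus B_1$, which near the equator is a corner of opening angle about $\pi/2$, bounded by the sphere and the plane. Moreover, $\Sigma$ is tangent to the sphere at $q$, with almost horizontal normal $\nn$. Pointwise curvature information alone will not suffice: Theorem \ref{t:ndcor} and the bound $\alpha_1\geq -1$ do not exclude this configuration, because a small piece of minimal surface can sit in such a corner. The contradiction must therefore use that $\Sigma$ is proper and has no boundary.

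For the barrier I will use the $n$-dimensional catenoid, $n\geq 3$, which lies in a slab of width comparable to its neck radius with a constant like $(n-1)^{-1}$. The plan is to place a scaled catenoid $\mathcal C$ with vertical axis through a point just below $q$, with neck radius of order $h^{1/2}$ or so. Its lower wing should lie below $\Gamma$, except near the neck, where it should sit inside $B_1$. Its upper wing should be pushed inside $B_1\cup\{x_{n+1}<0\}$, using that the sphere rises quadratically away from $q$ while the catenoid stays in a bounded slab. With this placement, the portion of $\mathcal C$ lying in $W$ is compact and has boundary in $B_1\cup\{x_{n+1}\le 0\}$, so its boundary is disjoint from $\Sigma$. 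I will then slide $\mathcal C$ horizontally toward $q$. Because $\Sigma$ must leave $q$ into $W$, it has to cross the neck region, and the interior maximum principle for minimal hypersurfaces then gives a first interior contact, which is a contradiction. Properness of $\Sigma$ guarantees that the contact happens at a first time and in the interior.

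The main obstacle is making the geometry of this barrier quantitative. The neck must fit inside the corner $W$ near $q$, and the wings must exit through $B_1$ or $\Gamma$ before reaching regions where $\Sigma$ is unconstrained. Balancing the slab-width constant of the catenoid against the quadratic separation between the sphere and its tangent plane is what should produce the threshold $h\sim [4(n-1)]^{-2}$, and hence $\epsilon_n=[4(n-1)]^{-4}$. If the sliding argument becomes too delicate, I would instead try a fixed catenoid touching $\Sigma$ from the side, combined with the strong maximum principle.
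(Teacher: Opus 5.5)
Your reduction matches the paper's. At $p\in\Gamma\cap\cD_0$ one has $|\nabla_\Gamma d|^2=1-\langle e_{n+1},\nn\rangle^2$, so the theorem is equivalent to the closest point $q$ having height at least $[4(n-1)]^{-2}\,d(p)$.

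The gap is in the barrier, which is the whole substance of the proof, and the configuration you describe cannot exist. A catenoid $\mathcal C$ with vertical axis is rotationally symmetric about a vertical line. Each of its ends is a graph over the exterior of the neck ball in a horizontal hyperplane, and the upper end lies above the neck height $c$ with height increasing outward. If $\mathcal C$ meets $W$ at all, the upper end has positive height at every large horizontal radius, and there it is also outside $B_1$. So the upper wing can never be pushed into $B_1\cup\{x_{n+1}<0\}$, and $\mathcal C\cap W$ is never compact, whatever the curvature of $\partial B_1$ near $q$. For the full catenoid, both components of $\RR^{n+1}\setminus\mathcal C$ meet $W$ in unbounded sets, so nothing forces $\Sigma$ through the neck.

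A vertical-axis barrier can only work if you discard the upper half. Keep the lower half, truncated at the neck, with the neck tucked inside $B_1$ and the rim below $\Gamma$. Then $\partial B_1$, $\Gamma$ and this skirt enclose a bounded region of $W$, which you want to contain $q$, and translating the skirt upward from below $\Gamma$ produces an interior first contact. But in that version $\Sigma$ crosses the skirt, not the neck. You would still have to verify that the neck stays in $B_1$ and the rim below $\Gamma$ along the whole family, and that $q$ ends up inside the region.

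Second, the quantitative step is missing even granting a correct barrier. You say that balancing the slab width against the sphere's curvature ``should'' give $h\sim[4(n-1)]^{-2}$, but that number is read off from the statement; nothing in your sketch produces it. The neck scale $h^{1/2}$ is likewise unexplained. Since the theorem asserts this specific $\epsilon_n$, an argument yielding only some $h_n>0$ does not prove it. Your plan also sets aside $n=2$, which the statement covers.

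The paper uses no barrier at all:
\begin{itemize}
\item After a rigid motion, Lemma~\ref{l:findquarter} uses $B_{d(p)}(p)\cap\Sigma=\emptyset$. It shows that if $\langle e_{n+1},q-p\rangle<\frac{d(p)}{16(n-1)^2}$, then $\Sigma\cap B_s\subset\{x_n,x_{n+1}>0\}$ with $s=\frac{d(p)}{4(n-1)}$, and $q$ is an interior point near the edge of this quarter-space.
\item Lemma~\ref{l:function2} notes that $f=|x-p'|^2-\frac{n}{n-1}|z|^2$ from \eqr{e:definef} is subharmonic on any minimal hypersurface, since $\Delta_\Sigma|x|^2=2n$ and $\Delta_\Sigma|z|^2\le 2(n-1)$. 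It also shows $f\le(2-\frac1{n-1})s^2$ on $\partial(B_s\cap\Sigma)$.
\item By Lemma~\ref{l:findquarter}, $f(q)>(2-\frac1{n-1})s^2$, so the maximum principle on $B_s\cap\Sigma$, which has compact closure, is violated.
\end{itemize}
That is exactly where $\epsilon_n=[4(n-1)]^{-4}$ comes from.
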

 
In this special case where $\Gamma = \{ x_{n+1} = 0 \}$, Lemma \ref{l:GTF} give for $p \in \Gamma \cap \cD_0$ that 
 \begin{align}	\label{e:nabdF}
 	|\nabla_{\Gamma}  d|^2 (p) = 1 - \langle \partial_{n+1} , \frac{q-p}{|q-p|}  \rangle^2 \, ,
 \end{align}
where $q \in \Sigma$ is the unique closest point to $p$.  Thus, 
  Theorem \ref{p:minimal} follows from showing that 
\begin{align}	\label{e:eus2}
	\langle \partial_{n+1} ,  q-p \rangle^2 \geq  \epsilon_n  \, | q-p|^2 \, .
\end{align}

In the next lemma, we let $z = (x_1 , \dots, x_{n-1})$ be the first $n-1$ coordinates, so $x= (z, x_n , x_{n+1} )$.  Given $s>0$, we will 
set   $p' = (  0 , \dots , 0,s,s)$ define a function $f$ by
\begin{align}	\label{e:definef}
	f (x) = |x-p'|^2 - \frac{n}{n-1} \, |z|^2 \, .
\end{align}
$\Sigma$ is contained in the half-space $\{ x_{n+1} > 0 \}$.  The next lemma shows that $f$ has nice properties when $\Sigma$ is locally
contained in a quarter-space $\{ x_n , x_{n+1} > 0 \}$.

\begin{Lem}	\label{l:function2}
If  $B_s \cap \Sigma^n \subset   \{ x_{n} > 0 \}$, then 
  $f$ from \eqr{e:definef}  satisfies  $\Delta_{\Sigma} f \geq 0$   and 
\begin{align}
	\max_{\partial (B_s \cap \Sigma)} f \leq \left( 2 - \frac{1}{n-1}\right) \, s^2  \, .
\end{align}
\end{Lem}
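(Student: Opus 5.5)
The proof has two independent parts: a subharmonicity computation for $f$ on $\Sigma$, and an elementary maximization of $f$ over the sphere $\partial B_s$.

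\emph{Subharmonicity.} On a minimal hypersurface the coordinate functions are harmonic, and $\Delta_\Sigma |x-a|^2 = 2n$ for any fixed $a$. For the term in $z$ we use $\Delta_\Sigma x_i^2 = 2|\nabla_\Sigma x_i|^2$, so
\begin{align*}
\Delta_\Sigma |z|^2 = 2\sum_{i=1}^{n-1} |\nabla_\Sigma x_i|^2 = 2\sum_{i=1}^{n-1}\left(1-\langle \nn_\Sigma,\partial_i\rangle^2\right) \leq 2(n-1) \, .
\end{align*}
Combining these,
\begin{align*}
\Delta_\Sigma f \geq 2n - \frac{n}{n-1}\cdot 2(n-1) = 0 \, .
\end{align*}
This step is routine.

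\emph{Boundary bound.} The boundary $\partial(B_s\cap\Sigma)$ lies in $\partial B_s \cap \{x_n \geq 0,\ x_{n+1}\geq 0\}$. The hypothesis gives $x_n>0$ on $B_s\cap\Sigma$, hence $x_n\geq 0$ on its closure, and $x_{n+1}>0$ holds throughout because $\Sigma$ lies in the half-space. So it suffices to bound $f$ on the closed region
\begin{align*}
\{|x|=s,\ x_n\geq 0,\ x_{n+1}\geq 0\} \, .
\end{align*}
Expanding with $p'=(0,\dots,0,s,s)$ gives
\begin{align*}
|x-p'|^2 = |x|^2 - 2s(x_n+x_{n+1}) + 2s^2 \, .
\end{align*}
On $|x|=s$ this becomes
\begin{align*}
f = 3s^2 - 2s(x_n+x_{n+1}) - \frac{n}{n-1}|z|^2 \, .
\end{align*}
Write $a=x_n+x_{n+1}\geq 0$, and use the constraint in the form $|z|^2 = s^2 - x_n^2 - x_{n+1}^2$. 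This yields
\begin{align*}
f = 3s^2 - 2sa - \frac{n}{n-1}\left(s^2 - x_n^2 - x_{n+1}^2\right) \, .
\end{align*}
Since $x_n,x_{n+1}\geq 0$, we have $x_n^2+x_{n+1}^2\leq a^2$. The function $g(a) = 3s^2 - 2sa - \frac{n}{n-1}(s^2-a^2)$ is convex in $a$. The constraint $|x|=s$ forces $a\leq \sqrt2\, s$, and the admissible range is $a\in[0,\sqrt2 s]$, so we maximize $g$ over this interval.

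\emph{Main obstacle.} By convexity the maximum of $g$ is at an endpoint. At $a=0$ we get $g(0) = (2-\tfrac1{n-1})s^2$, which is exactly the claimed bound. At $a=\sqrt2 s$ we get
\begin{align*}
g(\sqrt2 s) = \left(3 - 2\sqrt2 + \frac{n}{n-1}\right)s^2 \, ,
\end{align*}
and one checks this is at most $(2-\tfrac1{n-1})s^2$. Note that $\tfrac{n}{n-1} = 1 + \tfrac1{n-1}$, so the inequality reads
\begin{align*}
4 - 2\sqrt2 + \tfrac1{n-1} \leq 2 - \tfrac1{n-1} \, ,
\end{align*}
that is, $\tfrac{2}{n-1} \leq 2\sqrt2 - 2 \approx 0.83$. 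This holds for $n\geq 4$ but fails for small $n$. So the crude bound $x_n^2+x_{n+1}^2\leq a^2$ is lossy, and this is where the care is needed.

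To fix this, I would maximize exactly. With $u=x_n$, $v=x_{n+1}$ ranging over the quarter disk $u,v\geq 0$, $u^2+v^2\leq s^2$, the function
\begin{align*}
f = 3s^2 - 2s(u+v) - \frac{n}{n-1}\left(s^2-u^2-v^2\right)
\end{align*}
is convex in $(u,v)$. Its maximum over this convex region is therefore attained at an extreme point: the origin $(0,0)$, or a point of the arc $u^2+v^2=s^2$. At the origin $f = (2-\tfrac{1}{n-1})s^2$. On the arc $f = 3s^2 - 2s(u+v)$, and $u+v\geq s$ there, so $f\leq s^2$, which is smaller. This gives the claimed bound for all $n$, with the maximum attained at $x_n=x_{n+1}=0$, $|z|=s$.
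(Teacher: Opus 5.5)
Your proposal is correct and matches the paper's proof: the subharmonicity computation is the same, and on $\partial B_s$ you make the same substitution $|z|^2 = s^2 - x_n^2 - x_{n+1}^2$. That substitution gives exactly the paper's expression $\left(2-\frac{1}{n-1}\right)s^2 + x_n\left(\frac{n\,x_n}{n-1} - 2s\right) + x_{n+1}\left(\frac{n\,x_{n+1}}{n-1} - 2s\right)$. The only difference is the final step: the paper notes that each extra term is nonpositive for $x_n, x_{n+1} \in [0,s]$, since $\frac{n}{n-1} \leq 2$, so your convexity and extreme-point argument, and the discarded detour through $a = x_n + x_{n+1}$, are not needed.
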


\begin{proof}
 The coordinate functions are harmonic and $\Delta_{\Sigma} |x|^2 = 2\, n$, so we have that
\begin{align}
	\Delta_{\Sigma} \, f = 2\, n -  \frac{n}{n-1} \, \Delta_{\Sigma} \, |z|^2 = 2\, n - \frac{n}{n-1} \, \sum_{i=1}^{n-1} 2\, |\nabla_{\Sigma} x_i|^2 \geq 2\, n -
	\frac{2\, n}{n-1} \, (n-1) 
	 = 0 \, .
\end{align}
This gives the first claim.  Suppose now that $(z,x_n , x_{n+1}) \in \partial (B_s \cap \Sigma)$.  It follows that $x_n , x_{n+1} \geq 0$ and
$x_n^2 + x_{n+1}^2 +|z|^2 =s^2$.   We have that
\begin{align}
	f (z,x_n , x_{n+1}) &=  (s-x_n)^2 + (s - x_{n+1})^2 - \frac{1}{n-1} \, |z|^2 \notag \\
	&= 2\, s^2 - 2\, s \, x_n + x_n^2    - 2 \, s\, x_{n+1} + x_{n+1}^2
	-  \frac{1}{n-1} \,  \left( s^2- x_n^2 - x_{n+1}^2
	\right) \\
	&= \left( 2 - \frac{1}{n-1} \right)\, s^2 + x_n \, \left( \frac{n\, x_n }{n-1}   - 2\, s\right) + x_{n+1} \, \left( \frac{n\,  x_{n+1}}{n-1} - 2\, s \right) 
	\, . \notag
\end{align}
Finally, the last two terms are nonpositive since $x_n , x_{n+1}$ are both in $[0,s]$.
\end{proof}

  The next lemma shows that if we can find a closest point in $\Sigma$ where the angle is low enough, then we can use the previous lemma to get a contradiction.

\begin{Lem}	\label{l:findquarter}
Suppose that there exists $p \in \{ x_{n+1} = 0 \}$ and $q \in \Sigma$ with $d(p) = |p-q|$ and 
\begin{align}
	\langle \partial_{n+1} , q-p \rangle  < \frac{|q-p|}{16\, (n-1)^2} \, .
\end{align}
Then we can translate and rotate in the first $n$ coordinates so that 
\begin{enumerate}
\item  $B_s \cap \Sigma \subset \{ x_n , x_{n+1} > 0\}$ where $s = \frac{d(p)}{4 \, (n-1)}$.
\item The function $f$ from \eqr{e:definef} has  
 $f(q) > s^2 \left(2- \frac{1}{n-1} \right)$.
 \item $q$ is in the interior of $B_s \cap \Sigma$.
 \end{enumerate}
\end{Lem}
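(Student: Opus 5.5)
The plan is to make an explicit choice of coordinates adapted to the pair $p,q$. All motions act only on the first $n$ coordinates, so they preserve $\{x_{n+1}=0\}$ and $\{x_{n+1}>0\}$. The only geometric input is that $q$ is a closest point, so the open ball $B_{d}(p)$, $d=d(p)$, misses $\Sigma$. Write
$$
q-p = a\, e + h\, \partial_{n+1}, \qquad a\ge 0,\quad 0<h<\frac{d}{16(n-1)^2}=\frac{s^2}{d},
$$
where $e$ is a horizontal unit vector and $s=\frac{d}{4(n-1)}$; note $h>0$ since $\Sigma\subset\{x_{n+1}>0\}$. Then $a^2=d^2-h^2$, so $a$ is very close to $d$. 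Rotate so that $e=\partial_n$. Translate horizontally so that the new origin is $o = q-\delta\,\partial_n - h\,\partial_{n+1}$, where $\delta>0$ is a parameter of order $s^2/d$ to be fixed. In these coordinates $q=(0,\dots,0,\delta,h)$ and $p=(0,\dots,0,\delta-a,0)$.

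Claims (2) and (3) are then direct computations. Since $z(q)=0$,
$$
f(q)=(s-\delta)^2+(s-h)^2 = 2s^2-2s(\delta+h)+\delta^2+h^2 ,
$$
and (2) reduces to $2(\delta+h)<\frac{s}{n-1}=\frac{4s^2}{d}$, up to the harmless positive terms $\delta^2+h^2$. Because $h<s^2/d$, it suffices to take $\delta< s^2/d$. Claim (3) holds because $|q|^2=\delta^2+h^2$, which is of order $s^4/d^2\ll s^2$.

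For claim (1), the $x_{n+1}>0$ part is automatic from the hypothesis on $\Sigma$. For the $x_n>0$ part, I will show that $B_s\cap\{x_n\le 0\}\subset B_d(p)$. Take $x$ with $|x|<s$ and $x_n\le 0$. Then
$$
|x-p|^2 = |x|^2 - 2\langle x, p\rangle + |p|^2 = |x|^2 +2(\delta-a)\,(-x_n)\cdot(-1)\cdots
$$
and, more precisely, since $\langle x,p\rangle=(\delta-a)x_n\ge 0$ and $|p|=a-\delta$, we get
$$
|x-p|^2\le s^2+(a-\delta)^2 .
$$
Thus it suffices that $s^2+(a-\delta)^2<d^2=a^2+h^2$, that is, $s^2 < 2a\delta-\delta^2+h^2$. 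Since $a\approx d$, this asks roughly for $\delta>\frac{s^2}{2d}$.

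So $\delta$ must lie in a window of the form $\frac{s^2}{2d}(1+o(1))<\delta<\frac{s^2}{d}$, and I would take $\delta=\frac{3s^2}{4d}$. The main obstacle is checking this window honestly, with the error terms included: $a=\sqrt{d^2-h^2}$ instead of $d$, the terms $\delta^2$ and $h^2$, and the fact that (2) must be strict. The two competing inequalities are
$$
2a\delta>s^2+\delta^2-h^2
\qquad\text{and}\qquad
2s(\delta+h)-\delta^2-h^2<\frac{s^2}{n-1}.
$$
Both quantities $h/d$ and $\delta/d$ are at most $\frac{1}{16(n-1)^2}$, so the corrections are of relative size $O\big((n-1)^{-2}\big)$. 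I expect the numerical constants $4(n-1)$ and $16(n-1)^2$ in the statement to be exactly what makes both inequalities close. If the margin turns out too thin for some small $n$, the fallback is to tilt the rotation slightly, choosing $e$ not exactly horizontal-aligned with $q-p$, or to shift $o$ so that $z(q)\neq 0$; the latter only helps (2), since it subtracts $\frac{n}{n-1}|z|^2$ from $|q-p'|^2$ less than it moves $q$.
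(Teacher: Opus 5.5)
Your proof is correct and is essentially the paper's: the paper uses the same coordinates with $z(p)=z(q)=0$ and puts $x_n(q)=\frac{r}{16(n-1)^2}$, i.e.\ $\delta=s^2/d$ (the top of your window), and it proves (1) by the same expansion of $|x-p|^2$ combined with $B_{d}(p)\cap\Sigma=\emptyset$. The check you deferred closes with room to spare for $\delta=\frac{3s^2}{4d}$: with $\lambda=\frac{1}{16(n-1)^2}=s^2/d^2$ one has $a\ge(1-\lambda)d$, so $2a\delta-\delta^2\ge\left(\frac32-\frac{33}{16}\lambda\right)s^2>s^2$, while $2s(\delta+h)<\frac{7s^2}{8(n-1)}<\frac{s^2}{n-1}$, so no fallback is needed.
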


\begin{proof}
Set $r = |q-p|$, so that $s =  \frac{r}{4 \, (n-1)}$. Let $q'$ be the projection of $q$ to $\{ x_{n+1} = 0 \}$.
We translate and rotate in the first $n$ coordinates so that the origin lies along the line segment from $p$ to $q'$, this segment is in the $x_n$-axis, and  
\begin{align}
	  x_{n} (q') = \frac{r}{16 \, (n-1)^2} \, .
\end{align}
In these coordinates $z(q) = 0$, $x_n (q) = \frac{r}{16 \, (n-1)^2}$ and $0 < x_{n+1} (q) < \frac{r}{16\, (n-1)^2}$. It follows immediately that $q$ is in the interior of
$B_s$, giving (3).  Moreover, we see that 
\begin{align}
	f(q) &= \left( s - x_n (q) \right)^2 + \left( s - x_n (q) \right)^2 - \frac{1}{n-1} \, |z(q)|^2 \geq 
	2\,  \left( s -  \frac{r}{16 \, (n-1)^2} \right)^2  \notag \\
	&>  2\, s^2 - 4 \,s \, \frac{r}{16 \, (n-1)^2} = 2\, s^2 - 4 \,s \, \frac{4\, (n-1) \, s}{16 \, (n-1)^2} = \left( 2 - \frac{1}{n-1} \right) \, s^2 \, .
\end{align}
This gives (2).  
It remains to prove (1), i.e., to show that $B_s \cap \Sigma \subset \{ x_n > 0 \}$. 
Suppose  that $x \in B_s \cap \Sigma$. Set $\lambda = \frac{1}{16\, (n-1)^2}$, so that $s= \sqrt{\lambda} \, r$ and $p$ has coordinates
\begin{align}	\label{e:usemepn}
	z(p) = 0 \, ,  (\lambda - 1) \, r \leq x_{n} (p) \leq  (2\, \lambda - 1) \, r {\text{ and }} x_{n+1} (p) = 0 \, .
\end{align}
Since $B_r (p) \cap \Sigma = \emptyset$, we have for any $x \in \Sigma$ that
\begin{align}
	r^2 &\leq |x - p|^2 = |z|^2 + (x_n -p_n)^2 + x_{n+1}^2 = |x|^2 -  2\, x_n \, p_n + p_n^2 \leq |x|^2 -  2\, x_n \, p_n + (\lambda - 1)^2\, r^2 
	 \, , \notag
\end{align}
where the last inequality used \eqr{e:usemepn}.
If $x \in B_s \cap \Sigma$, this implies that
\begin{align}
	0 &\leq s^2  -  2\, x_n \, p_n + (\lambda - 1)^2\, r^2 - r^2  =  \lambda \, r^2 -  2\, x_n \, p_n + (\lambda^2 - 2\, \lambda)\, r^2   \leq 
	-  2\, x_n \,p_n 		 \, ,
\end{align}
where the inequality used that $0< \lambda < 1$. Since $p_n < 0$, this implies that $x_n > 0$, completing the proof.
\end{proof}

\begin{proof}[Proof of Theorem \ref{p:minimal}]
Suppose that $p \in \Gamma$ and $q \in \Sigma$ satisfy
$d(p) = |q-p|$. If 
\begin{align}	 
	\langle \partial_{n+1} , q-p \rangle  < \frac{|q-p|}{16\, (n-1)^2} \, ,
\end{align}
then we get a contradiction from 
  Lemmas \ref{l:function2}  and \ref{l:findquarter} and
  the maximum principle.  Thus, we see that
   \eqr{e:eus2} holds with $\epsilon_n =   \frac{1}{[4\, (n-1)]^4}$,   giving the claim.
\end{proof}

\begin{proof}[Proof of Theorem \ref{c:combine}]
If $p \in \cD_0 \cap \Gamma$, then Theorem \ref{p:minimal} gives at $p$  that
\begin{align}		\label{e:whatn}
	|\nabla_{\Gamma} d|^2 \leq 1 - \epsilon_n \, .
\end{align}
In $p \notin \cD_0$, then we proceed as in the proof of 
Theorem \ref{t:visc}: Suppose that $\phi \leq d$ with equality at $p$ be a smooth viscosity barrier.   Given any $\epsilon > 0$, let   $h_{\epsilon} = d_{\epsilon} + \epsilon$ be as in the proof of Theorem \ref{t:visc}, so that 
\begin{align}
	\phi \leq d \leq h_{\epsilon} {\text{ with equality at }} p \, .
\end{align}
Since $p$ is a local minimum for $h_{\epsilon} - \phi$, the first derivative test and 
Theorem \ref{p:minimal} give at $p$
\begin{align}
	|\nabla_{\Gamma} \phi|^2 = |\nabla_{\Gamma} h_{\epsilon} |^2 = |\nabla_{\Gamma}  d_{\epsilon}|^2 \leq 1 - \epsilon_n \, .
\end{align}
This proves that  \eqr{e:whatn} holds in the viscosity sense as claimed. 

 For the second claim, Theorem \ref{t:visc} gives (in the viscosity sense) for any $p> 0$ that 
\begin{align}	\label{e:visc1A}
	\frac{d^{2-p}}{p} \,   \Delta_{\Gamma} \,  d^p  = d \, \Delta_{\Gamma} \, d + (p-1) \, |\nabla_{\Gamma} d|^2 
	  \leq  \left(  \frac{n-1}{n} \, |\nabla_{\Gamma} d|^2 + (p-1) \right)  \, |\nabla_{\Gamma} d|^2 \, .
\end{align}
Combining this with the first claim 
gives 
\begin{align}
	\frac{d^{2-p}}{p} \,   \Delta_{\Gamma} \,  d^p   
	  \leq  \left(  \frac{n-1}{n} \, (1 - \epsilon_n)  + (p-1) \right)  \, |\nabla_{\Gamma} d|^2 \, .
\end{align}
The right-hand side is nonpositive as long as $p \leq \frac{ 1 + (n-1) \, \epsilon_n}{n}$, completing the proof. 
\end{proof}

\section{Pushing apart locally}

In this section, $\Sigma^n \subset \RR^{n+1}$ is  a proper minimal immersion and $\Gamma_t^n \subset \RR^{n+1} \setminus \Sigma$ is a MCF.  We will prove Theorem \ref{t:apart} which shows that $\Gamma_t$ locally pushes away from $\Sigma$.

\begin{proof}[Proof of Theorem \ref{t:apart}]
We can assume that $x_0 = 0$. 
If $d(x,t)$ is the distance from $x \in \Gamma_t$ to $\Sigma$, then 
Theorem \ref{c:minimal} gives that $\Box_{\Gamma_t} \, d^{ \frac{1}{n}} \geq 0$ in the viscosity sense.  Thus, we see that $v = d^{ - \frac{1}{n}}$ satisfies
\begin{align}
	\Box_{\Gamma_t} \, v &= - \frac{ \Box_{\Gamma_t} d^{ \frac{1}{n}} }{ d^{ \frac{1}{n}}} - 2 \, \frac{ |\nabla_{\Gamma_t} d^{ \frac{1}{n}}|^2}{ \left( d^{ \frac{1}{n}} \right)^3} 
	= - \frac{ \Box_{\Gamma_t} d^{ \frac{1}{n}} }{ d^{ \frac{1}{n}}} - 2 \, \frac{|\nabla_{\Gamma_t} v|^2}{v}  \leq - 2 \, \frac{|\nabla_{\Gamma_t} v|^2}{v} \, . \label{e:forEH}
\end{align}
 As in the proof of Theorem $2.1$ in  \cite{EH}, define a cutoff function $\phi = (R^2 - |x|^2 - 2nt)_+$ and observe that (on the support of $\phi$)
 \begin{align}	\label{e:phit}
 	\Box_{\Gamma_t} \, \phi^2 \leq - 2 \, |\nabla_{\Gamma_t} \phi|^2 \, .
 \end{align}
 The product rule, \eqr{e:phit} and \eqr{e:forEH} give that
 \begin{align}	\label{e:forEH2}
 	\Box_{\Gamma_t} \, (\phi^2 \, v^2) &\leq - 6 \, \phi^2 \, |\nabla_{\Gamma_t} v|^2 - 8 \, \langle \phi \, \nabla_{\Gamma_t} v , v \, \nabla_{\Gamma_t} \phi \rangle
	- 2 \, v^2 \, |\nabla_{\Gamma_t} \phi |^2 \, .
 \end{align}
 If we note that
 \begin{align}
 	\langle \frac{\nabla_{\Gamma_t} \phi}{\phi} , \nabla_{\Gamma_t} (v^2 \, \phi^2) \rangle &= 2\, \left( v^2 \, |\nabla_{\Gamma_t} \phi|^2 + \phi \, v \, \langle
	\nabla_{\Gamma_t} \phi , \nabla_{\Gamma_t} v \rangle \right) \, , 
 \end{align}
 when we can rewrite \eqr{e:forEH2} as 
  \begin{align}	\label{e:forEH3}
 	\Box_{\Gamma_t} \, (\phi^2 \, v^2) &\leq 2\, \langle \frac{\nabla_{\Gamma_t} \phi}{\phi} , \nabla_{\Gamma_t} (v^2 \, \phi^2) \rangle
	- 6 \, \phi^2 \, |\nabla_{\Gamma_t} v|^2 - 12 \, \langle \phi \, \nabla_{\Gamma_t} v , v \, \nabla_{\Gamma_t} \phi \rangle
	- 6 \, v^2 \, |\nabla_{\Gamma_t} \phi |^2
	\notag \\
	&= 2\, \langle \frac{\nabla_{\Gamma_t} \phi}{\phi} , \nabla_{\Gamma_t} (v^2 \, \phi^2) \rangle - 6 \, 
	\left| \phi \, \nabla_{\Gamma_t} v - v \, \nabla_{\Gamma_t} \phi \right|^2 \leq 2\, \langle \frac{\nabla_{\Gamma_t} \phi}{\phi} , \nabla_{\Gamma_t} (v^2 \, \phi^2) \rangle
	 \, .
 \end{align}
 Therefore, we can apply the weak parabolic maximum principle to conclude that $\phi \, v$ has its maximum at the initial time. Thus, 
  we have for $t \geq t_0$ that
\begin{align}	\label{e:initialtime}
	d^{-1}  (x,t) \, \phi^n (x,t) \leq \sup_{\Gamma_{t_0}} \, \left( d^{ -1}  (x, t_0) \, \phi^n (x,t_0) \right) \, .
\end{align}
\end{proof}

\subsection{Pushing apart   for two flows}

We will next establish the  generalization Theorem \ref{t:twoflowslocal} of Theorem \ref{t:apart} that holds when both hypersurfaces are flows.  
Recall that, by Huisken's monotonicity theorem \cite{H},
 if $w\geq 0$ satisfies $\Box\,w\leq 0$, 
then
\begin{align}
	\frac{\partial}{\partial t} \, \int_{\Sigma_t} w\,\Phi \leq 0 \,  ,
\end{align}
where $\Phi (x,t)=(-4\,\pi\,t)^{-\frac{n}{2}}\,\exp\,\left(\frac{|x|^2}{4\,t}\right)$.    This gives a way of localizing estimates.   Theorem \ref{t:twoflowslocal} gives
a  stronger local estimate for the distance between two disjoint flows.

The next lemma is the key for the proof.
 
 \begin{Lem}	\label{l:exp}
If   $f= c_1 + c_2\, ( |x-x_0|^2+2\,n\,t)$, then in the viscosity sense
\begin{align}
	\Box_{\Gamma_t} \,\left( \e^f \,d^{-1} \right)\leq 0  \, .
\end{align}
\end{Lem}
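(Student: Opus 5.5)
The plan is to reduce the claim to the inequality $\Box_{\Gamma_t} \log d \geq 0$ from Theorem \ref{t:maind}, by writing $\e^f d^{-1} = \e^{f - \log d}$. Two facts drive the argument. First, $\Box_{\Gamma_t} f = 0$. Second, for any smooth $g$ we have the chain rule identity
\begin{align}
\Box_{\Gamma_t} \, \e^{g} = \e^{g} \left( \Box_{\Gamma_t} g - |\nabla_{\Gamma_t} g|^2 \right) ,
\end{align}
so the gradient term only helps. Hence if $\Box_{\Gamma_t}(f - \log d) \leq 0$, then $\Box_{\Gamma_t}(\e^{f}d^{-1}) \leq 0$. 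Since Theorem \ref{t:maind} only gives the inequality for $\log d$ in the viscosity sense, I will transfer it to $\e^f d^{-1}$ through test functions rather than differentiating $d$ directly.

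\emph{Step 1 ($f$ is caloric on the flow).} Along $\Gamma_t$ we have $\Delta_{\Gamma_t} x = H_{\Gamma_t}\nn_{\Gamma_t}$, with the sign convention matching \eqr{e:MCFGamma}. This gives
\begin{align}
\Delta_{\Gamma_t}|x-x_0|^2 = 2n + 2H_{\Gamma_t}\langle x - x_0 , \nn_{\Gamma_t}\rangle .
\end{align}
By \eqr{e:MCFGamma}, $\partial_t |x-x_0|^2 = 2 H_{\Gamma_t} \langle x-x_0, \nn_{\Gamma_t}\rangle$. Therefore $\Box_{\Gamma_t}|x-x_0|^2 = -2n$. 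Since $\Box_{\Gamma_t}(2nt) = 2n$, it follows that $\Box_{\Gamma_t} f = 0$ for every choice of constants $c_1, c_2$.

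\emph{Step 2 (viscosity transfer).} The statement $\Box_{\Gamma_t}(\e^f d^{-1}) \leq 0$ is a subsolution statement, so it is tested by functions touching from above. Let $\varphi$ be $C^2$ with $\varphi \geq \e^f d^{-1}$ for $t \leq t_0$ near $(x_0,t_0)$, with equality at $(x_0,t_0)$. We have $\varphi > 0$ near this point, because $d > 0$ by disjointness. Set $\psi = f - \log \varphi$. Then $\psi$ is $C^2$, satisfies $\psi \leq \log d$ for $t \leq t_0$, and has equality at $(x_0,t_0)$. So $\psi$ is a lower parabolic barrier for $\log d$. Theorem \ref{t:maind} then gives $\Box_{\Gamma_t}\psi(x_0,t_0) \geq 0$. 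Writing $\varphi = \e^{f-\psi}$ and using Step 1 together with the identity above, we get at $(x_0,t_0)$
\begin{align}
\Box_{\Gamma_t}\varphi = \varphi\left( \Box_{\Gamma_t} f - \Box_{\Gamma_t}\psi - |\nabla_{\Gamma_t}(f-\psi)|^2\right) \leq 0 ,
\end{align}
which is exactly the viscosity inequality required.

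\emph{Expected obstacle.} The main subtlety is making sure that the viscosity inequality $\Box_{\Gamma_t}\log d \geq 0$ is available in the form needed. The theorem states $\Box_{\Gamma_t} d \geq -|\nabla_{\Gamma_t} d|^4$, which in the normalization of Theorem \ref{t:visc} should be read as $d\,\Box_{\Gamma_t} d \geq -|\nabla_{\Gamma_t} d|^4$. If we pass to $\log d$ for smooth lower barriers $\phi$ of $d$, that normalization gives
\begin{align}
\Box_{\Gamma_t}\log\phi \geq \frac{|\nabla_{\Gamma_t}\phi|^2 - |\nabla_{\Gamma_t}\phi|^4}{\phi^2} \geq 0 ,
\end{align}
using $|\nabla_{\Gamma_t}\phi| = |\nabla_{\Gamma_t} d| \leq 1$. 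Since $\log$ is increasing, lower barriers for $\log d$ correspond exactly to lower barriers for $d$ via $\phi = \e^{\psi}$. The time derivative is treated as in the proof of Theorem \ref{t:visc}, by following the normal flow line through $x_0$. With that in hand, the rest is the algebra above.
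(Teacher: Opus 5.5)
Your proposal is correct and follows the paper's own argument: write $\e^f d^{-1}=\e^{h}$ with $h=f-\log d$, and use $\Box_{\Gamma_t}\e^{h}=\e^{h}\bigl(\Box_{\Gamma_t}h-|\nabla_{\Gamma_t}h|^2\bigr)$, $\Box_{\Gamma_t}f=0$ on a mean curvature flow, and $\Box_{\Gamma_t}\log d\geq 0$ from Theorem \ref{t:maind}. Your two additions are both sound: the explicit test-function transfer via $\psi=f-\log\varphi$, which the paper only does formally, and your reading of Theorem \ref{t:maind} as $d\,\Box_{\Gamma_t}d\geq-|\nabla_{\Gamma_t}d|^4$, which is exactly what its proof establishes and is what makes $\log d$ a viscosity supersolution.
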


\begin{proof}
We first show that 
if $\eta$ is a positive function and $p>0$, then  in the viscosity sense
\begin{align}		\label{e:introeta} 
\frac{\Box_{\Gamma_t} \,\left(\eta\,d^{-p}\right)}{\eta\,d^{-d}}\leq \Box_{\Gamma_t} \,\log \eta  
	-|\nabla_{\Gamma_t} \log\,(\eta\,d^{-p})|^2\,  .
\end{align}
Namely, 
  the chain rule gives that 
\begin{align}
\frac{\Box_{\Gamma_t} \,\e^{h}}{\e^h}=\Box_{\Gamma_t} \,h-|\nabla_{\Gamma_t}  h|^2\,  .
\end{align}
Applying this to $h=\log \, (\eta \, d^{-p})$   gives that
\begin{align}
	\frac{\Box_{\Gamma_t} \,\left(\eta\,d^{-p}\right)}{\eta\,d^{-d}} = \Box_{\Gamma_t} \, \log \eta - p \, \Box_{\Gamma_t}  \, \log d 
	-|\nabla_{\Gamma_t} \log\,(\eta\,d^{-p})|^2 \, .
\end{align}
The claim \eqr{e:introeta}  follows from this since $\Box_{\Gamma_t}  \log d \geq 0 $ by Theorem \ref{t:maind}.
 
Taking $\eta=\e^f$ and $p=1$, then  \eqr{e:introeta} gives that
\begin{align}	\label{e:generalef}
	(\eta\,d^{-1})^{-1}\,\Box_{\Gamma_t} \,(\eta\,d^{-1})
	&\leq \Box_{\Gamma_t} \,f\,  . 
\end{align}
 Suppose now that 
  $f= c_1 + c_2\, ( |x- x_0|^2+2\,n\,t)$, so that $\Box_{\Gamma_t} \,f=0$ on any MCF,   and  thus   \eqr{e:generalef} gives that 
\begin{align}
      \Box_{\Gamma_t} \,(\eta\,d^{-1}) \leq 0\,  .
\end{align}
\end{proof}

\begin{proof}[Proof of Theorem \ref{t:twoflowslocal}]
This follows from Lemma \ref{l:exp} and the weak maximum principle.  Note that the compactness of $\Gamma_t$ is used to apply the maximum principle. 
\end{proof}

\section{Two mean curvature flows}

There is a fully parabolic analog of the previous results, where both hypersurfaces flow by  mean curvature flow.
This fully parabolic estimate is not as strong as the previous one and there does not appear to be a corresponding
 improved gradient bound.  

Suppose now that $\Sigma_t \subset \RR^{n+1}$ is a properly immersed mean curvature flow. With the convention that we have used, this means  that
\begin{align}
	\partial_t \, x = H_{\Sigma_t} \, \nn_{\Sigma_t} \, .	\label{e:MCFw}
\end{align}
Let $d(x,t)$ be the distance from $x$ to $\Sigma_t$.  Given $t_0$, define
the set $\cD_0 (\Sigma_{t_0})$ as in Definition \ref{d:cD0}.

\begin{Lem}	\label{l:babyFn}
If $b^2 \leq 1$ and $c= \sqrt{1- b^2}$, then for $s \in [0,1)$
\begin{align}
	\frac{s^2}{1-s} - \frac{b^2\, s}{1-s} \geq - (1-c)^2 \, .
\end{align}
\end{Lem}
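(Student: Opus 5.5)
The plan is to reduce the claim to a perfect square, mirroring the rewriting used in the proof of Lemma \ref{l:Fn}. Set
\begin{align}
	G(s) = \frac{s^2}{1-s} - \frac{b^2\, s}{1-s} \, , \qquad s \in [0,1) \, .
\end{align}

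First I substitute $b^2 = 1 - c^2$ into the numerator:
\begin{align}
	s^2 - b^2 s = s(s-1) + c^2 s \, .
\end{align}
Dividing by $1-s$ and writing $\frac{s}{1-s} = \frac{1}{1-s} - 1$ gives
\begin{align}
	G(s) = -s + c^2\,\frac{s}{1-s} = \frac{c^2}{1-s} - s - c^2 \, .
\end{align}

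Next I add $(1-c)^2 = 1 - 2c + c^2$ to both sides. The $c^2$ terms cancel, and $1 - s$ appears, so
\begin{align}
	G(s) + (1-c)^2 = \frac{c^2}{1-s} + (1-s) - 2c = \frac{\big(c - (1-s)\big)^2}{1-s} \, .
\end{align}
For $s \in [0,1)$ the denominator $1-s$ is positive, so the right-hand side is nonnegative. This is exactly the claim $G(s) \geq -(1-c)^2$.

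This also shows that equality holds precisely at $s = 1 - c$, which lies in $[0,1)$ whenever $b^2 > 0$ (i.e.\ $c < 1$). It is the analogue of the critical point $s_0$ in Lemma \ref{l:Fn}, here with the $\beta$-term absent. Equivalently, one could note that $G$ is convex on $[0,1)$ and that $G'(s) = \frac{c^2}{(1-s)^2} - 1$ vanishes only at $1-s = c$.

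The degenerate case $c = 0$ (that is, $b^2 = 1$) needs a separate look, but it is already covered by the identity: then $G(s) = -s > -1 = -(1-c)^2$. There is no real obstacle here; the only step that requires care is spotting the completed square, and the rewriting above is chosen precisely to exhibit it.
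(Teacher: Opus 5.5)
Your proof is correct, and it takes a different route from the paper. The paper gets this lemma in one line by letting $\beta \to \infty$ in Lemma \ref{l:Fn}. For fixed $s$, the term $\frac{s^2}{\beta-1+s}$ tends to $0$ and the bound $\frac{\beta-1}{\beta}(1-c)^2$ tends to $(1-c)^2$. Lemma \ref{l:Fn} itself is proved by calculus: the paper locates the unique critical point $s_0=(\beta-1)\frac{1-c}{\beta-1+c}$ and checks the sign of $F'$ on either side of it.

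You instead prove the inequality from scratch via the exact identity $G(s)+(1-c)^2=\frac{(1-s-c)^2}{1-s}$. Your equality point $1-c$ is precisely the $\beta\to\infty$ limit of $s_0$. The paper's route is economical, since Lemma \ref{l:Fn} is needed anyway for Theorem \ref{t:ndcor}, and it presents this statement as the $\beta=\infty$ endpoint of that family. Your route is self-contained and avoids both the derivative analysis and the limiting argument. It also makes the sharpness and the equality case explicit.

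One small slip, in a side remark only: the equality point $s=1-c$ lies in $[0,1)$ exactly when $c>0$, i.e.\ $b^2<1$, not when $b^2>0$. Your own computation for $c=0$ confirms this, since it shows the inequality is strict there. This does not affect the proof of the inequality.
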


\begin{proof}
This follows by taking the limit as $\beta \to \infty$ in Lemma \ref{l:Fn}.
 \end{proof}

\begin{proof}[Proof of Theorem \ref{t:maind}]
Fix $t_0$ and a point $p_0 \in \Gamma_{t_0}$.  Let $\phi (x,t)$ be a parabolic viscosity barrier at $(p_0 , t_0)$, so that (near $(p_0,t_0)$)
\begin{align}
	\phi (x,t) \leq d(x,t) {\text{ for }}  t \leq t_0 {\text{ with equality at }} (p_0 , t_0) \, . \label{e:barrBOX}
\end{align}
Assume first that $p_0 \in \cD_0 (\Sigma_{t_0})$. Let $q_0 \in \Sigma_{t_0}$ be the unique closest point and $-1 < \alpha_1 \leq \alpha_2 \leq \dots$ the eigenvalues of $d\, A$ at $q_0$. 
Proposition \ref{p:mainset} gives at $(p_0 , t_0)$  that 
\begin{align}  \label{e:appDab}
	d\, \Delta_{\Gamma_t} \, d  - d \, H_{\Sigma_t}
	+ d\, H_{\Gamma_t} \, \langle \nn_{\Sigma_t}  , \nn_{\Gamma_t} \rangle
	+ \sum_{i=1}^n \frac{\alpha_i^2}{1+ \alpha_i}   \leq    - \frac{\alpha_1}{1+\alpha_1} \, |\nabla_{\Gamma_t} d|^2 
	 \, ,  \notag
\end{align}
where $\nn_{\Gamma_t}^T$ is the projection of $\nn_{\Gamma_t}$ perpendicular to $\nn_{\Sigma_t}$.  Since \eqr{e:barrBOX} implies that $d-\phi$ has a local minimum in space at $(p_0,t_0)$, the first and second derivative tests give that
$\nabla_{\Gamma_t} \phi (p_0 , t_0) = \nabla_{\Gamma_t} d (p_0 , t_0)$ and also at $(p_0, t_0)$
\begin{align}
	\phi \, \Delta_{\Gamma_t} \, \phi= d \, \Delta_{\Gamma_t} \, \phi \leq d\, \Delta_{\Gamma_t} \, d \, . 
\end{align}
Combining this with \eqr{e:appDab} gives at $(p_0 , t_0)$ that
\begin{align}	\label{e:gotttagoback}
	\phi \, \Delta_{\Gamma_t} \, \phi \leq  d \, H_{\Sigma_t}
	- d\, H_{\Gamma_t} \, \langle \nn_{\Sigma_t}  , \nn_{\Gamma_t} \rangle
	- \sum_{i=1}^n \frac{\alpha_i^2}{1+ \alpha_i}       - \frac{\alpha_1}{1+\alpha_1} \, |\nabla_{\Gamma_t} d|^2 
	 \, . 
\end{align}
Next,  let $p(t) \in \Gamma_t$ and $q(t) \in \Sigma_t$ be the flow lines through $p_0$ and $q_0$, respectively,
 with $p(t_0) = p_0$ and $q(t_0) = q_0$.  It follows that $d(p(t),t) \leq |p(t) - q(t)|$ with equality at $t=t_0$.  Combining this with \eqr{e:barrBOX} gives for $t \leq t_0$ that 
 \begin{align}
 	\phi (p(t), t) \leq  |p(t) - q(t)| {\text{ with equality at }} t=t_0 \, .
 \end{align}
 From this, we see that
 \begin{align}
 	\phi_t (p_0 ,t_0) &\geq \partial_t \big|_{t=t_0} \, |p(t) - q(t)| = \langle p_t (t_0) - q_t (t_0) , \frac{ p(t_0) - q(t_0)}{|p_0 - q_0|} \rangle
	\notag \\
	&=  \langle  H_{\Gamma_t} \, \nn_{\Gamma_t} - H_{\Sigma_t} \, \nn_{\Sigma_t}
	 ,\, -  \nn_{\Sigma_t} \rangle =  H_{\Sigma_t}  - H_{\Gamma_t} \,  \langle \nn_{\Gamma_t} , \nn_{\Sigma_t} \rangle \, ,
 \end{align}
 where the first equality on the second line used the mean curvature flow equation \eqr{e:MCFw}. Combining this with \eqr{e:gotttagoback}
 gives at $(p_0 , t_0)$ that
 \begin{align}
 	\phi \, \Box_{\Gamma_t} \, \phi \geq  \sum_{i=1}^n \frac{\alpha_i^2}{1+ \alpha_i}       + \frac{\alpha_1}{1+\alpha_1} \, |\nabla_{\Gamma_t} d|^2 =  \sum_{i=1}^n \frac{\alpha_i^2}{1+ \alpha_i}       + \frac{\alpha_1}{1+\alpha_1} \, |\nabla_{\Gamma_t} d |^2 
\, .
 \end{align}
 If $\alpha_1 \geq 0$, then this is nonnegative and we are done.  If $\alpha_1 \in (-1 , 0)$, then we 
 apply Lemma \ref{l:babyFn} with $b^2 =  |\nabla_{\Gamma_t} d |^2$ and 
 $s= - \alpha_1$ to get at $(p_0 , t_0)$ that
 \begin{align}
 	\phi \, \Box_{\Gamma_t} \, \phi \geq  - (1- \sqrt{1- b^2})^2 \geq - b^4 = -   |\nabla_{\Gamma_t} d |^4 = -  |\nabla_{\Gamma_t} \phi |^4
	\, .
 \end{align}
  This completes the proof when $p_0 \in  \cD_0 (\Sigma_{t_0})$.  In the remaining case, we replace $d$ by $d_{\epsilon} + \epsilon$ 
  and let $\epsilon \to 0$ as in the proof of Theorem \ref{t:visc}.
\end{proof}

\section{The distance to shrinking spheres}

We will next prove Proposition \ref{p:sphereX}.

\begin{proof}[Proof of Proposition \ref{p:sphereX}]
  If $\Gamma_t$ lies inside a sphere of radius $r(t)$, then $d(x)=r(t)-|x|$ and if it lies outside, then $d(x)=|x|-r(t)$.  
  Since $\Gamma_t$ is a MCF, $\Box_{\Gamma_t} \, |x|^2 = - 2\, n$ and
\begin{align}	\label{e:myspx}
\Box_{\Gamma_t} \,|x|=\frac{|\nabla_{\Gamma_t} \,|x||^2-n}{|x|}=\frac{|\nabla_{\Gamma_t} d|^2-n}{|x|}\,  .
\end{align}
Here the last equality used that $\nabla_{\Gamma_t} d=\pm \, \nabla_{\Gamma_t} |x|$.  Using \eqr{e:myspx} and
$r'(t)=-\frac{n}{r}$ gives  \eqr{e:D8}.

In the rest of this proof, $\Gamma_t$ will be inside of $\Sigma_t$.  Suppose that
 $x_0$ is a point  in $\Gamma_{t_0}$ so that  $x_0 \in T_{x_0} \Gamma_{t_0}$.  The last property gives at $(x_0 , t_0)$  that
   $|\nabla_{\Gamma_t} d| (x_0 , t_0) = 1$. 
 Set $d_0 = d(x_0 , t_0)$ and $r_0 = r(t_0)$. 
 Since $\Gamma_0$  lies inside the sphere $\Sigma_0$, we have at $(x_0 , t_0)$  that
\begin{align}
	|x_0| \, \Box_{\Gamma_t} \, d  (x_0 , t_0) = \frac{n\,d_0}{d_0+|x_0|}-|\nabla_{\Gamma_t} \,d|^2 (x_0 , t_0) = \frac{n \, d_0}{r_0} - 1 =
	\frac{ n \, d_0 - r_0}{r_0}
	\,  .
\end{align}
This gives the last claim.
 \end{proof}

\begin{Lem}	\label{l:cyl}
Suppose that $\Sigma_t = \SS^1_{\sqrt{-2t}} \times \RR^{n-1} \subset \RR^{n+1}$ and $\Gamma_t$ is a MCF inside it.
\end{Lem}

\begin{proof}
Set 
$s(x) = \sqrt{x_1^2 + x_2^2}$ and $r(t) =  \sqrt{-2\, t}$, so that $d(x,t) = r(t)- s(x(t))$ on $\Gamma_t$.  Since $\Box_{\Gamma_t} x_i = 0$, it follows that
\begin{align}
	 \Box_{\Gamma_t} \, d = - \frac{1}{r} + \frac{1}{s} \, \left( |\nabla_{\Gamma_t} s|^2 - |\nabla_{\Gamma_t} x_1|^2 
	 - |\nabla_{\Gamma_t} x_2|^2 \right) \, .
\end{align}
Suppose now that we are at a point where $|\nabla_{\Gamma_t} x_2|^2 = |\nabla_{\Gamma_t} x_1|^2 =1$, so that also $|\nabla_{\Gamma_t} s|^2 =1$, then we have 
\begin{align}
	 \Box_{\Gamma_t} \, d = - \frac{1}{r} - \frac{1}{s} {\text{ and }} |\nabla_{\Gamma_t} d| = 1  \, .
\end{align}
Using this and the chain rule, we get for $p> 0$  at this point that
\begin{align}
	\frac{d^{2-p}}{p} \,  \Box_{\Gamma_t} \, d^p &= d\,  \Box_{\Gamma_t} d+ (1-p) = 1-p
	-  (r-s) \, \left(\frac{1}{r} + \frac{1}{s} \right) 
\end{align}
\end{proof}

\appendix

 \section{Strong maximum principle}
 
 In this section,  $\Sigma_t , \Gamma_t \subset \RR^{n+1}$ are smooth  embedded proper connected hypersurfaces flowing by MCF for $t \in [t_1 , t_2]$ with $t_1 < t_2$ and $d(x,t)$ is the distance from $x \in \Gamma_t$ to $\Sigma_t$.   
  
 \begin{Thm}	\label{t:smp}
If there is an open set $\Omega \subset \RR^{n+1}$ and  a  point $p \in \Omega \cap \Gamma_{t_2}$, so that 
\begin{align}	\label{e:smp}
	d(p,t_2) = \inf_{t \in [t_1 , t_2]} \, \inf_{\Omega \cap \Gamma_{t}} d   \, ,
\end{align}
then $\Omega \cap \Gamma_t$ is a static   hyperplane and $\Sigma_t$ contains a translate of this.
 \end{Thm}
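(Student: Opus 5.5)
The plan is to combine Theorem \ref{t:maind} with a strong maximum principle for viscosity supersolutions, and then to analyze the equality case of the pointwise inequalities behind that theorem.

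\emph{Step 1: $d$ is constant.} By Theorem \ref{t:maind}, $u=\log d$ satisfies $\Box_{\Gamma_t} u \geq 0$ in the viscosity sense on $\Omega\cap\Gamma_t$. By \eqr{e:smp}, $u$ attains its minimum over the parabolic region $\{(x,t): x\in\Omega\cap\Gamma_t,\ t\in[t_1,t_2]\}$ at the interior point $(p,t_2)$. I will invoke the strong maximum principle for viscosity supersolutions of the heat equation on a smooth moving hypersurface (for instance in the form of Da Lio or Calabi's barrier version, after a local graphical parametrization of $\Gamma_t$). It gives $u\equiv\log d(p,t_2)=:\log m$ on the backward parabolic component containing $(p,t_2)$. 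I will work in the connected component of $\Omega\cap\Gamma_t$ containing $p$, and take $\Omega$ connected, or shrink it, as needed. Consequently $d\equiv m$ there, so the constant $\phi\equiv m$ is a legitimate parabolic barrier at every point of this region.

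\emph{Step 2: rigidity at closest points.} Fix $(x_0,t_0)$ in the region and a closest point $y_0\in\Sigma_{t_0}$. Use the barrier $\phi\equiv m$, with $\phi_t=0$, $\nabla\phi=0$ and $\Delta\phi=0$, in the proof of Theorem \ref{t:maind}. If needed, first replace $d$ by $h_\epsilon=d_\epsilon+\epsilon$ from Lemma \ref{l:translate}, so that $x_0\in\cD_0^\epsilon$. Since $\nabla_{\Gamma}\phi=\nabla_{\Gamma} h_\epsilon=0$, the gradient term drops out. The inequality derived there becomes
\begin{align*}
0=\phi\,\Box_{\Gamma_t}\phi \geq \sum_{i=1}^n \frac{(\alpha_i^\epsilon)^2}{1+\alpha_i^\epsilon},
\end{align*}
where $\alpha^\epsilon_i=(m-\epsilon)\lambda_i(y_0)$. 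This forces every $\lambda_i(y_0)=0$. In particular $\alpha_1=0>-1$, so $x_0\in\cD_0$ after all. Moreover $\nabla_{\Gamma} d=0$ gives $\nn_{\Gamma}=\pm\nn$ at $x_0$, by the identity $|\nn_\Gamma^T|^2=|\nabla_\Gamma d|^2$ from Proposition \ref{p:mainset}. Hence $\Gamma_{t_0}$ is tangent at $x_0$ to the level set $\{d=m\}$.

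\emph{Step 3: flatness and staticity.} Since $x_0\in\cD_0$, $d$ is $C^2$ near $x_0$ and its Hessian vanishes there by Lemma \ref{l:GTF}. Because $\Gamma$ lies in $\{d=m\}$ with $\nabla d=-\nn$ normal to $\Gamma$, the second fundamental form of $\Gamma$ at $x_0$ equals the tangential Hessian of $d$, which is zero. As $x_0$ was arbitrary, $A_{\Gamma_t}\equiv 0$ on the region, so $\Omega\cap\Gamma_t$ is a piece of a hyperplane. It is static because $H_{\Gamma_t}=0$ means the flow does not move it; the unit normal is continuous and locally constant, which keeps the hyperplane fixed. Finally, the closest points $y=x-m\nn_\Gamma$ for $x\in\Omega\cap\Gamma_t$ sweep out the translate $\Omega\cap\Gamma_t-m\nn_\Gamma$, and all of these lie in $\Sigma_t$.

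The main obstacle is Step 1. Applying a strong maximum principle to a merely Lipschitz viscosity supersolution on a moving surface requires care, both in quoting the right result for viscosity solutions and in handling the domain $\Omega\cap\Gamma_t$ as $t$ varies. I would handle this by working in local graph coordinates over the tangent plane. In those coordinates $\Box_{\Gamma_t}$ is a uniformly parabolic linear operator with smooth coefficients, and the classical viscosity strong maximum principle applies, with connectedness propagating the equality backward in time.
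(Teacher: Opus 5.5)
\paragraph{Verdict.} Your Step 1 is the paper's step. The equality-case computation in Step 2 is also correct: with the barrier $\phi\equiv m$ and $\nabla_\Gamma h_\epsilon=0$, you get $\lambda_i(y_0)=0$. There are two gaps after that.

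\paragraph{Gap 1: membership in $\cD_0$.} The inference ``$\alpha_1=0>-1$, so $x_0\in\cD_0$ after all'' does not follow. Membership in $\cD_0$ also requires that $d$ be differentiable at $x_0$ with a \emph{unique} nearest point, and nothing you have shown rules out two nearest points. By Step 2, two nearest points would have to be $x_0\pm m\,\nn_\Gamma$. In that case $d$ is not differentiable at $x_0$, so the Step 3 argument (``$d$ is $C^2$ near $x_0$ with vanishing Hessian, hence $A_\Gamma(x_0)=0$'') does not apply. This step can be repaired without $\cD_0$, working only with the smooth function $h_\epsilon$:
\begin{itemize}
\item $\Hess^{\RR^{n+1}} d_\epsilon(x_0)=0$, and $h_\epsilon|_\Gamma\ge m$ with equality at $x_0$.
\item Hence $\Hess_\Gamma h_\epsilon(x_0)=\pm A_\Gamma(x_0)\ge 0$, with the sign fixed by the side of $y_0$, so $A_\Gamma(x_0)$ is semidefinite.
\item The time inequality $H_\Sigma(y_0)-H_\Gamma\langle\nn,\nn_\Gamma\rangle\le 0$ and the spatial inequality $\ge 0$ must both be equalities, and $H_\Sigma(y_0)=0$. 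So $H_\Gamma(x_0)=0$.
\item A semidefinite form with zero trace vanishes, so $A_\Gamma(x_0)=0$.
\end{itemize}

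\paragraph{Gap 2: which side the nearest points lie on.} This is the more substantive gap, in the last sentence of Step 3. Pointwise you only know that the nearest points of $x$ lie in $\{x+m\nu,\ x-m\nu\}$, where $\nu$ is the constant normal of the flat piece. The sets of $x$ where each sign occurs are closed and cover the region, but they may overlap, so the side can switch. Local information of the kind you derived cannot rule this out: flatness of $\Sigma_t$ at nearest points and flatness of $\Gamma_t$ are both consistent with a smooth, non-analytic hypersurface that follows one parallel plane over part of the region and the other plane over the rest. So the claims ``$y=x-m\nn_\Gamma$ for all $x$'' and ``$\Sigma_t$ contains a translate'' are unproved. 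Excluding the switch needs global input, such as the flow equation via a second maximum-principle argument, or analyticity.

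\paragraph{How the paper closes it.} The paper uses a translation trick.
\begin{itemize}
\item Set $z_p=y_p-p$, fix $s\in(0,1)$, and apply Theorem \ref{t:maind} to the translated flow $\Sigma_t-s\,z_p$.
\item Lemma \ref{l:triangle} gives $d'\ge (1-s)m$, with equality at $(p,t_2)$.
\item The strong maximum principle then makes this an equality on the whole region.
\item The equality case of the triangle inequality forces $x+z_p\in\Sigma_t$ and $\nn_\Gamma(x)\parallel z_p$ for every $x$ in the region.
\end{itemize}
This one argument gives both the constant normal and the consistently-sided translate, so the second-order analysis of your Steps 2--3 is not needed.
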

 
 We will use the following elementary fact in the proof:
 
 \begin{Lem}	\label{l:triangle}
 Suppose that $\Sigma$ is a proper  hypersurface in $\RR^{n+1}$,   $\dist (x , \Sigma) = r > 0$,  $z \in \RR^{n+1}$ has $|z| < r$, and $\Sigma' = \Sigma - z$ is the translate of $\Sigma$ by $z$.  Then 
 \begin{align}	\label{e:triangle}
 	\dist (x , \Sigma') \geq r - |z| 
 \end{align}
 and equality implies that $z$ is normal to $\Sigma'$ at any closest point to $x$.
 \end{Lem}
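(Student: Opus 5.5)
The inequality \eqr{e:triangle} should follow from the triangle inequality, and the equality case from the first derivative test at a closest point. For the inequality, note that every point of $\Sigma'$ has the form $y - z$ with $y \in \Sigma$. Then
\begin{align}
	|x - (y-z)| = |(x+z) - y| \geq |x-y| - |z| \geq r - |z| \, ,
\end{align}
since $|x-y| \geq \dist(x,\Sigma) = r$. Taking the infimum over $y \in \Sigma$ gives $\dist(x,\Sigma') \geq r - |z| > 0$. Because $\Sigma'$ is proper (a translate of a proper hypersurface), the infimum is attained at some closest point $y' = y - z \in \Sigma'$.

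For the equality case, suppose $\dist(x,\Sigma') = r - |z|$ and let $y' = y-z$ be any closest point to $x$ in $\Sigma'$. Both inequalities in the chain above must then be equalities:
\begin{enumerate}
\item From $|x-y| = r$, the point $y$ is a closest point of $\Sigma$ to $x$.
\item From $|(x+z)-y| = |x-y| - |z|$, we have equality in the reverse triangle inequality with $|z| < |x-y|$. This forces $z$ to be a nonnegative multiple of $y - x$, that is, $z = |z|\,\frac{y-x}{|y-x|}$ when $z \neq 0$; the case $z=0$ is trivial.
\end{enumerate}
Then $x - y' = x - y + z$ is also parallel to $y-x$, so $z$ is parallel to $x - y'$.

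Finally, since $y'$ minimizes the distance from $x$ to $\Sigma'$ and $x \notin \Sigma'$, the first derivative test (as in the proof of Lemma \ref{l:translate}) shows that $\Sigma'$ is tangent to the sphere $\partial B_{r-|z|}(x)$ at $y'$. Hence $x - y'$ is normal to $\Sigma'$ at $y'$, and by the previous step so is $z$.

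The only mildly delicate point is justifying that $y'$ is a smooth (embedded) point of the immersion, so that "normal at $y'$" makes sense. I would handle this exactly as in Lemma \ref{l:translate}: the open ball $B_{r-|z|}(x)$ misses $\Sigma'$, so every sheet of the immersion passing through $y'$ is tangent to the sphere there. The claim then holds for each such sheet, with the normal direction $x-y'$ being the same for all of them.
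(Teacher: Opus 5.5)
Your proof is correct and follows essentially the same route as the paper: a triangle inequality at a closest point of $\Sigma'$, the equality case forcing $z$ to be parallel to $x-y'$, and the first derivative test showing that $x-y'$ is normal to $\Sigma'$ there. The only differences are cosmetic. You write points of $\Sigma'$ as $y-z$ and use the reverse triangle inequality, whereas the paper starts from a closest point $y\in\Sigma'$ and bounds $r\le|x-(y+z)|\le|x-y|+|z|$; you also make explicit that a closest point exists by properness, which the paper leaves implicit.
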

 
 \begin{proof}
 Let $y \in \Sigma'$ be a closest point to $x$, so that $|x-y| =  \dist (x , \Sigma')$ and $x-y$ is normal to $\Sigma'$ at $y$.
  Since $y+z \in \Sigma$, we have that
 \begin{align}
 	r = \dist (x , \Sigma) \leq |x- (y+z)| \leq |x - y| + |z| = \dist (x , \Sigma') + |z| \, ,
 \end{align}
 giving the inequality \eqr{e:triangle}.  If we have equality in \eqr{e:triangle}, then we conclude that $(y+z)$ is closest to $x$ in $\Sigma$ and $(x - y)$ and $z$ are parallel.  Since $x-y$ is normal to $\Sigma'$ at $y$, we see that $z$ is also normal there.
 \end{proof}
 
 \begin{proof}[Proof of Theorem \ref{t:smp}]
By Theorem \ref{t:maind},  $\Box_{\Gamma_t} \, \log d  \geq 0 $
in the viscosity sense.  Therefore, \eqr{e:smp} and the  strong maximum principle  give  that  $d\equiv  r_0 > 0$    is constant on $\Omega \cap \Gamma_t$ for $t \in [t_1 , t_2]$.   In particular, given any $t$ and $x \in \Omega \cap \Gamma_t$, there is a closest point $y_x$ in $\Sigma_t$ with 
\begin{align}
	|x-y_x| = r_0 > 0   {\text{ and }} (y_x-x) = \pm r_0 \, \nn (y_x) \, .
\end{align}

Let $y_p$ in $\Sigma_{t_2}$ be the closest point to $p$ and set $z_p = y_p - p$.  Note that $|z_p| = r_0$ by construction. 
Given some $s \in (0,1)$, 
define the translated hypersurface $\Sigma_t' = \Sigma_t - s\, z_p$.  Note that this is also a    mean curvature flow.
Thus, if we let $d' (x,t)$ be the distance from $x$ to $\Sigma_t'$, then 
Theorem \ref{t:maind} gives that  $\Box_{\Gamma_t} \, \log d'  \geq 0 $.  Furthermore, Lemma \ref{l:triangle} gives that 
\begin{align}	\label{e:dtriangle}
	d'(x,t) \geq d(x,t) - s \, |z_p| = (1-s)\, r_0 \, .
\end{align}
Moreover, by construction, we get equality at $(p,t_2)$, so the strong maximum principle gives that $d' \equiv (1-s) \, r_0$ is constant for $t\leq t_2$. 
In particular,  for $t\leq t_2$, 
we have equality in \eqr{e:dtriangle}  and Lemma \ref{l:triangle} gives for each $x \in \Gamma_t$ that there is a closest point $y_x \in \Sigma_t'$ which has $z_p$ as a normal vector at $y_x$. Since every point in $\Omega \cap \Gamma_t$ is at least as far away from $y_x$, the first derivative test gives that $y_x -x$ is normal also at $x$.  Thus, since $y_x - x$ is parallel to $z_p$ (by the lemma), we see that  the (constant vector)  $\frac{z_p}{|z_p|}$ is a unit normal at every $x \in \Omega \cap \Gamma_t$.  This says that $\Omega \cap \Gamma_t$ is a hyperplane.   
    \end{proof}

\end{document}